\tikzstyle myBG=[line width=3pt,opacity=1]
\tikzset{%
  >={Latex[width=2mm,length=2mm]},
            base/.style = {rectangle, rounded corners, draw=black,
                           minimum width=2cm, minimum height=0.8cm,
                           text centered, font=\sffamily},
  open/.style = {base},
       undec/.style = {base, fill=red!30},
    dec/.style = {base, fill=green!30},
         process/.style = {base, minimum width=2.5cm, fill=orange!15,
                           font=\ttfamily},
}
\newcommand{\lb}{\langle}
\newcommand{\rb}{\rangle}
\newcommand{\R}{\mathfrak{R}}
\newcommand{\gr}{\mathrel{\mathscr{R}}}
\newcommand{\ZM}{{\mathbb{Z}M}}
\newcommand{\ZN}{{\mathbb{Z}N}}
\newcommand{\Z}{\mathbb{Z}}
\newcommand{\FPn}{{\rm FP}\sb n}
\newcommand{\F}{{\rm F}}
\newcommand{\FP}{{\rm FP}}
\newcommand{\FPinfty}{{\rm FP}\sb \infty}
\definecolor{cof}{RGB}{219,144,71}
\definecolor{pur}{RGB}{186,146,162}
\definecolor{greeo}{RGB}{91,173,69}
\definecolor{greet}{RGB}{52,111,72}
\newcommand{\inv}{^{-1}}
\newcommand{\ov}[1]{\ensuremath{\overline {#1}}}
\newtheorem{Thm}{Theorem}[section]
\newtheorem*{theorema}{Theorem A}
\newtheorem*{theoremb}{Theorem B}
\newtheorem{Prop}[Thm]{Proposition}
\newtheorem{theorem}[Thm]{Theorem}
\newtheorem{Lemma}[Thm]{Lemma}
{\theoremstyle{definition}
}
{\theoremstyle{remark}
\newtheorem{Rmk}[Thm]{Remark}}
{\theoremstyle{remark}
\newtheorem{remark}[Thm]{Remark}}
\theoremstyle{remark}
\newtheorem{lemma}[Thm]{Lemma}
\newtheorem{definition}[Thm]{Definition}
\theoremstyle{remark}
\newtheorem{Claim}{Claim}}
\theoremstyle{remark}
\theoremstyle{remark}
\theoremstyle{remark}
\numberwithin{equation}{section}
\title[Two-sided homological properties of special monoids]{
Two-sided homological properties of
special and one-relator monoids
}
\subjclass[2020]{20M50, 20M05, 20J05, 57M07, 20F10, 20F65}
\keywords{Homological finiteness property,
bi-$\FPn$,
cohomological dimension,
Hochschild cohomological dimension,
monoid,
special monoid,
one-relator monoid.
\\
\indent
This work was supported by the
EPSRC grant
EP/V032003/1 `Algorithmic, topological and geometric aspects of infinite groups, monoids and inverse semigroups'.  The second author was supported by a Simons Foundation Collaboration Grant, award number 849561, the Australian Research Council Grant DP230103184 and Marsden Fund Grant MFP-VUW2411
}
\begin{document}
\maketitle

\begin{center}
ROBERT D. GRAY
\footnote{School of Engineering, Mathematics, and Physics, University of East Anglia, Norwich NR4 7TJ, England.
Email \texttt{Robert.D.Gray@uea.ac.uk}.
}
and
BENJAMIN STEINBERG\footnote{
Department of Mathematics, City College of New York, Convent Avenue at 138th Street, New York, New York 10031,  USA.
Email \texttt{bsteinberg@ccny.cuny.edu}.}
\\
\today
\end{center}

\begin{abstract}
A monoid presentation is called special if the right-hand side of each defining relation is equal to 1. We prove results which relate the two-sided homological finiteness properties of a monoid defined by a special presentation with those of its group of units. Specifically we show that the monoid enjoys the homological finiteness property bi-$\FPn$ if its group of units is of type $\FPn$. We also obtain results which relate the
Hochschild cohomological dimension of the monoid to the cohomological dimension of its group of units. In particular we show that the Hochschild cohomological dimension of the monoid is bounded above by the maximum of $2$ and the cohomological dimension of its group of units. We apply these results to prove    a Lyndon's Identity type theorem for the two-sided homology of one-relator monoids of the form $\lb A \mid r=1 \rb$. In particular, we show that all such monoids are of type bi-$\FPinfty$. Moreover, we show that if $r$ is not a proper power then the one-relator monoid has Hochschild cohomological dimension at most $2$, while if $r$ is a proper power then it has infinite Hochschild cohomological dimension.
For any non-special one-relator monoid $M$ with defining relation $u=v$ we show that if there is no nonempty word $w$ such that $u,v \in A^*w \cap w A^*$ then $M$ is of type bi-$\FP_\infty$ and has Hochschild cohomological dimension at most $2$.   
\end{abstract}

\section{Introduction}
Algorithmic problems in algebra have their origins in logic and topology in the fundamental work of Dehn, Thue and Tietze in the early 1900s; see~\cite{BookAndOtto, LyndonAndSchupp, Margolis:1995qo}. The most important algorithmic question concerning an algebraic structure is the word problem, which asks whether one can decide whether two expressions over generators represent the same element. Markov~\cite{Markov1947} and Post~\cite{Post1947} proved independently that in general the word problem for finitely presented monoids is undecidable. This was later extended to cancellative monoids by Turing~\cite{Turing1950} and then to groups by Novikov~\cite{Novikov1952} and Boone~\cite{Boone1959}.

Given that the word problem is undecidable in general, a central theme running through the development of geometric and combinatorial group and monoid theory has been to identify and study families all of whose members have solvable word problem; see~\cite[Chapter~2]{bridson2002invitations}.
One of the most important and interesting such families is the class of one-relator groups, which were shown to have decidable word problem in classical work of Magnus~\cite{Magnus1932} in the 1930s. 
The theory of one-relator groups has  developed extensively over the last century and continues to be a highly active area of research; see~\cite{linton2025theory} for a recent survey. 

One of the challenges in studying one-relator groups is that their geometry can be quite complicated. 
On the other hand, one-relator groups have turned out to be susceptible for study using topological and homological methods. A classical result of this kind is Lyndon's Identity Theorem~\cite{Lyndon1950} which, interpreted topologically, constructs certain natural classifying spaces for one-relator groups that can be used to deduce all one-relator groups are $\FP_\infty$, and to determine their cohomological dimension.  
Some recent important results on one-relator groups that have been impacted by topological and homological methods include: residual finiteness of one-relator groups with torsion~\cite{wise2021structure}, the proof that one-relator groups with torsion are virtually free-by-cyclic~\cite{kielak2024virtually}, and the coherence of one-relator groups~\cite{jaikin2025coherence}. In particular, a key step in the recent breakthrough coherence theorem of Jaikin-Zapirain and Linton~\cite{jaikin2025coherence} was to establish homological coherence of one-relator groups, meaning that every finitely generated subgroup is of type $\FP_2$.

This paper is part of a programme of research initiated by the authors in 
\cite{GraySteinbergAGT, GraySteinbergDocumenta, GraySteinbergSelecta} aimed at developing topological and homological methods for monoids
and then applying these new results and techniques to the class of one-relator monoids\footnote{That is, monoids defined by presentations with a single defining relation $u=v$ where $u$ and $v$ are words over some finite alphabet $A$}.  
In contrast to one-relator groups, far less is currently known about one-relator monoids, and many fundamental questions about them remain open.  Indeed, it is an important longstanding open problem whether the word problem is decidable for one-relator monoids. While in general this problem is open, it has been solved in a number of key cases in the work of Adjan, Adjan and Oganesyan, and Lallement; see~\cite{Adjan1966,Adyan1987,Lallement1988}.

A natural way to attack the word problem for one-relator monoids is via the 
theory of string rewriting systems. Indeed, an open problem which is intimately related to the word problem for one-relator monoids is the question of whether every one-relator monoid admits a finite complete rewriting system (meaning a monoid presentation which is confluent and terminating; see~\cite[Chapter~12]{HoltBook}). Since complete rewriting systems give computable normal forms, a positive answer to this question would solve the word problem for one-relator monoids.

\begin{figure}
\makebox[\textwidth][c]{
\centering
    \resizebox{1.1\textwidth}{!}{%
\begin{tikzpicture}[node distance=1.8cm,
    every node/.style={minimum height=1.5cm}, align=center, 
implies/.style={double,double equal sign distance,-implies}, 
bendimplies/.style={line width=0.75pt, bend left, double,double equal sign distance,-implies}]
\newcommand\xsep{5}
\newcommand\ysep{3}  
\node[text width=2cm]  (b) at (2.5,3){$\Longrightarrow \cdots \Longrightarrow$};
\node[text width=2cm]  (b) at (7.5,3){$\Longrightarrow \cdots \Longrightarrow$};
\node[text width=2cm]  (b) at (2.5,3-3){$\Longrightarrow \cdots \Longrightarrow$};
\node[text width=2cm]  (b) at (7.5,3-3){$\Longrightarrow \cdots \Longrightarrow$};
\node[text width=2cm]  (b) at (2.5,3-6){$\Longrightarrow \cdots \Longrightarrow$};
\node[text width=2cm]  (b) at (7.5,3-6){$\Longrightarrow \cdots \Longrightarrow$};
\node[text width=2cm]  (b) at (2.5,3-9){$\Longrightarrow \cdots \Longrightarrow$};
\node[text width=2cm]  (b) at (7.5,3-9){$\Longrightarrow \cdots \Longrightarrow$};
\node (58)[open,minimum height=1.5cm,minimum width=2cm]  at (2*\xsep,\ysep){bi-$\FP_3$ \\ ($=$ FHT)};
\node (59)[open,minimum height=1.5cm,minimum width=2cm]  at (2*\xsep,2*\ysep){FDT};

\node (11)[open,minimum height=1.5cm,minimum width=2cm]  at (0,0){left- and \\ right-$\FP_\infty$};
\node (12)[open,minimum height=1.5cm,minimum width=2cm]   at (\xsep,0){left- and \\ right-$\FP_n$};
\node (57)[open,minimum height=1.5cm,minimum width=2cm]   at (2*\xsep,0){left- and \\ right-$\FP_3$};
  
\node (21)[open, minimum height=1.5cm,minimum width=1cm]   at (0,-\ysep){left-$\FP_\infty$};
  \node (new)[open, minimum height=1.5cm,minimum width=2cm]   at (\xsep,-\ysep){left-$\FP_n$};
  \node (22)[open, minimum height=1.5cm,minimum width=2cm]   at (0,\ysep){bi-$\FP_\infty$};
  \node (56)[open,minimum height=1.5cm,minimum width=2cm]   at (2*\xsep,-\ysep){left-$\FP_3$};
  
  \node (31)[open,minimum height=1.5cm,minimum width=2cm]   at (0,-2*\ysep){right-$\FP_\infty$};
  \node (32)[open,minimum height=1.5cm,minimum width=2cm]   at (\xsep,-2*\ysep){right-$\FP_n$};
  \node (55)[open,minimum height=1.5cm,minimum width=2cm]   at (2*\xsep,-2*\ysep){right-$\FP_3$};
  
\node (252)[open, minimum height=1.5cm,minimum width=2cm]   at (\xsep,\ysep){bi-$\FP_n$};

\node (68)[open, minimum height=1.5cm,minimum width=2cm]   at (3*\xsep,\ysep){bi-$\FP_2$};
\node (67)[open, minimum height=1.5cm,minimum width=2cm]   at (3*\xsep,0*\ysep){left- and \\ right-$\FP_2$};
\node (66)[open, minimum height=1.5cm,minimum width=2cm]   at (3*\xsep,-1*\ysep){left-$\FP_2$};
\node (78)[open, minimum height=1.5cm,minimum width=2cm]   at (4*\xsep,\ysep){bi-$\FP_1$ \\ ($=$ finitely \\ dominated)};
\node (77)[open, minimum height=1.5cm,minimum width=2cm]   at (4*\xsep,0*\ysep){left- and \\ right-$\FP_1$};
\node (76)[open, minimum height=1.5cm,minimum width=2cm]   at (4*\xsep,-1*\ysep){left-$\FP_1$ \\ ($=$ f.g.~universal \\ left congruence)};
\node (65)[open, minimum height=1.5cm,minimum width=2cm]   at (3*\xsep,-2*\ysep){right-$\FP_2$};
\node (75)[open, minimum height=1.5cm,minimum width=2cm]   at (4*\xsep,-2*\ysep){right-$\FP_1$ \\ ($=$ f.g.~universal \\ right congruence)};

\node (69)[open, minimum height=1.5cm,minimum width=2cm]   at (3*\xsep,2*\ysep){Finitely \\ presented};
\node (79)[open, minimum height=1.5cm,minimum width=2cm]   at (4*\xsep,2*\ysep){Finitely \\ generated};
  
\node (new0)[open,minimum height=1.5cm,minimum width=3cm]  at (0,2*\ysep){Admits a \\ finite complete \\ rewriting system};


\path (67) edge [bend right=35, line width=0.75pt, double,double equal sign distance,-implies]  node {} (65);
\path (77) edge [bend right=45, line width=0.75pt, double,double equal sign distance,-implies]  node {} (75);
  \draw[implies] ($(new0)!2cm!(59)$) -- ($(59)!2cm!(new0)$);
  \draw[implies] ($(58)!2cm!(59)$) -- ($(59)!2cm!(58)$);
  \draw[implies] ($(57)!2cm!(58)$) -- ($(58)!2cm!(57)$);
  \draw[implies] ($(56)!2cm!(57)$) -- ($(57)!2cm!(56)$);
  \draw[implies] ($(68)!2cm!(69)$) -- ($(69)!2cm!(68)$);
  \draw[implies] ($(67)!2cm!(68)$) -- ($(68)!2cm!(67)$);
  \draw[implies] ($(66)!2cm!(67)$) -- ($(67)!2cm!(66)$);
  \draw[implies] ($(78)!2cm!(79)$) -- ($(79)!2cm!(78)$);
  \draw[implies] ($(77)!2cm!(78)$) -- ($(78)!2cm!(77)$);
  \draw[implies] ($(76)!2cm!(77)$) -- ($(77)!2cm!(76)$);
  \draw[implies] ($(22)!2cm!(new0)$) -- ($(new0)!2cm!(22)$) ;
  \draw[implies] ($(11)!2cm!(22)$) -- ($(22)!2cm!(11)$);
  \draw[implies] ($(21)!2cm!(11)$) -- ($(11)!2cm!(21)$);
  \draw[implies] ($(55)!2cm!(65)$) -- ($(65)!2cm!(55)$);
  \draw[implies] ($(65)!2cm!(75)$) -- ($(75)!2cm!(65)$);
  \draw[implies] ($(59)!2cm!(69)$) -- ($(69)!2cm!(59)$);
  \draw[implies] ($(69)!2cm!(79)$) -- ($(79)!2cm!(69)$);
  \draw[implies] ($(58)!2cm!(68)$) -- ($(68)!2cm!(58)$);
  \draw[implies] ($(68)!2cm!(78)$) -- ($(78)!2cm!(68)$);
  \draw[implies] ($(57)!2cm!(67)$) -- ($(67)!2cm!(57)$);
  \draw[implies] ($(67)!2cm!(77)$) -- ($(77)!2cm!(67)$);
  \draw[implies] ($(56)!2cm!(66)$) -- ($(66)!2cm!(56)$);
  \draw[implies] ($(66)!2cm!(76)$) -- ($(76)!2cm!(66)$);
  \draw[implies] ($(12)!2cm!(252)$) -- ($(252)!2cm!(12)$);
  \draw[implies] ($(new)!2cm!(12)$) -- ($(12)!2cm!(new)$);
\path (11) edge [bend right=35, line width=0.75pt, double,double equal sign distance,-implies]  node {} (31);
\path (12) edge [bend right=35, line width=0.75pt, double,double equal sign distance,-implies]  node {} (32);
\path (57) edge [bend right=35, line width=0.75pt, double,double equal sign distance,-implies]  node {} (55);
\end{tikzpicture}
}}
\label{fig_FCRSHomologicalProperties}
\caption{
Homological finiteness properties satisfied by monoids that admit finite complete rewriting systems, and all implications between them.  
The proofs that all these implications hold, 
and that none of the implications in the diagram are reversible, 
can be found in the papers~\cite{cremanns1994finite, lafont1995new, Pride1995, wang2000second, KobayashiOtto2003, Kobayashi2005, Pride2006, Kobayashi2010}. 
\vspace{-6mm}
}
\end{figure}
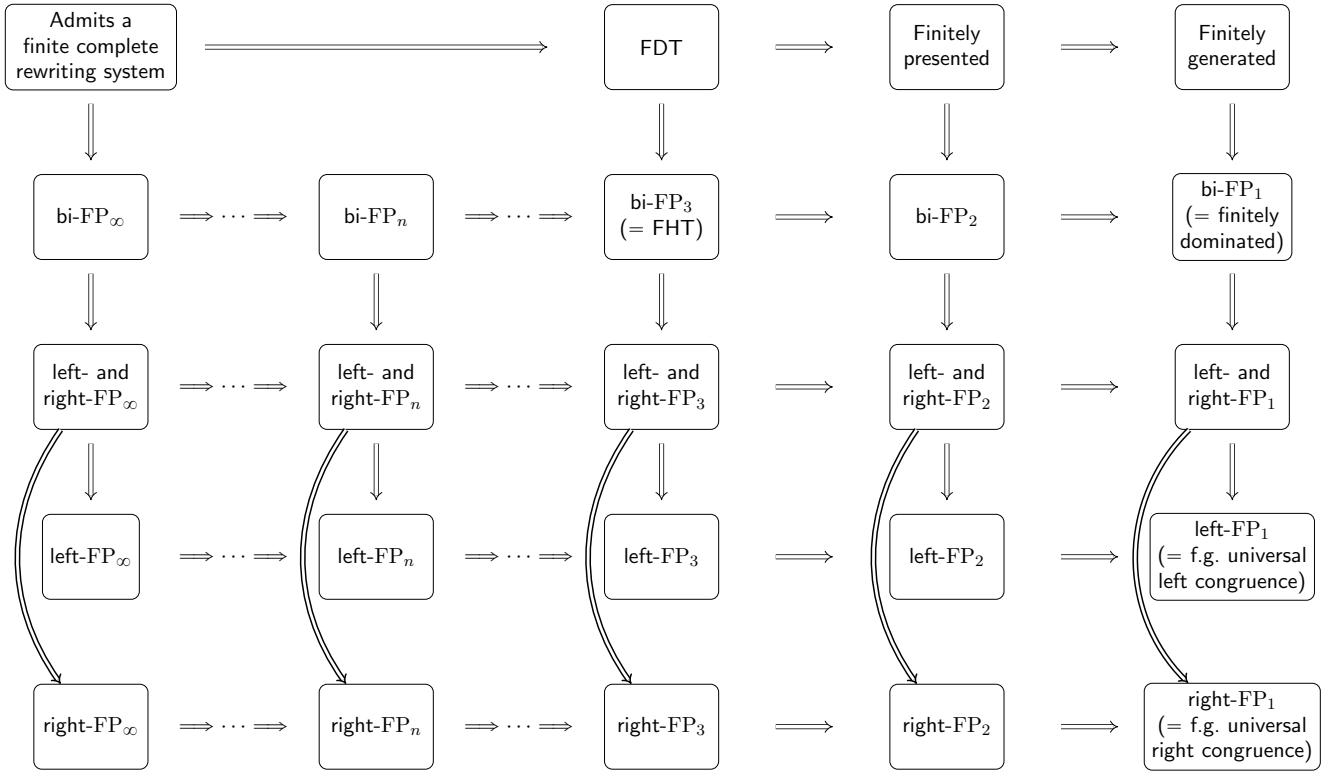

Homological and topological methods provide important tools for investigating the question of whether a given monoid admits a presentation by a finite complete rewriting system. The Anick-Groves-Squier theorem, cf.~\cite{Brown1989}, shows that if a monoid admits a finite complete rewriting system, then the monoid must satisfy the homological finiteness properties left- and right-$\FP_\infty$. Kobayashi~\cite{Kobayashi2005} improved this result by showing a monoid admitting a finite complete rewriting is of type bi-$\FPinfty$. There are in fact a range of finiteness properties satisfied by monoids that admit finite complete rewriting systems, and they have received extensive attention in the literature. The homotopical finiteness property FDT (finite derivation type) was introduced by Squier in~\cite{Squier1994} where it was shown that any monoid admitting a finite complete rewriting system had FDT. The two-sided homological finiteness property bi-$\FP_n$ was introduced in~\cite{KobayashiOtto2001}, and in~\cite{KobayashiOtto2003} it was proved that bi-$\FP_3$ is equivalent to the property FHT (finite homological type). Note that Alonso and Hermiller in~\cite{AlonsoHermiller2003} introduced another homological finiteness property satisfied by monoids admitting finite complete rewriting systems (that they also called bi-$\FP_n$) that was later shown by Pride~\cite{Pride2006} to be equivalent to a monoid satisfying both left-$\FP_n$ and right-$\FP_n$. 

In Figure~\ref{fig_FCRSHomologicalProperties} we have arranged all these properties and the various implications between them. Note that by results in~\cite{cremanns1994finite, lafont1995new, Pride1995, wang2000second, KobayashiOtto2003, Kobayashi2005, Pride2006, Kobayashi2010} 
none of the implications in Figure~\ref{fig_FCRSHomologicalProperties} are reversible in general. In particular, in relation to the results we prove in this paper, it is important to stress that it is \emph{not} true that a monoid satisfying left-$\FP_n$ and right-$\FP_n$ is of type bi-$\FP_n$.

These homological finiteness properties have also arisen naturally in connection to other areas including: monoids with finitely generated universal left congruence, which is equivalent to left-$\FP_1$ (see~\cite{dandan2019semigroups, east2024diameter,kobayashi2007homological}),  
the notion of ancestry in a monoid, due to White from the study of Banach algebras on groups and monoids see~\cite[Section 3]{dandan2019semigroups}  
and~\cite{White2017}, and finitely dominated monoids (meaning there is a finite subset that dominates the monoid in the sense of  Howie and Isbell~\cite{howie1967epimorphisms}),
which is equivalent to bi-$\FP_1$; see~\cite{kobayashi2007homological}. 
Monoids of type left-$\FP_n$ are also used to define the higher dimensional BNSR invariants of groups; see~\cite{Bieri1987, Bieri1988}.

Motivated by the open question of whether one-relator monoids admit finite complete rewriting systems, together with the Anick-Groves-Squier theorem, Kobayashi initiated the study of homological finiteness properties of one-relator monoids in the papers~\cite{kobayashi1998homotopy, Kobayashi2000}. 
In these papers 
Kobayashi resolved this problem in low dimensions by showing that all one-relator monoids are 
FDT and thus are of type left-$\FP_3$ and right-$\FP_3$. 
He then asked the natural question~\cite[Problem~1]{Kobayashi2000} of whether this result could be extended to higher dimension left- and right-$\FP_n$ for all $n$.    
In~\cite{GraySteinbergSelecta} we gave a positive answer to Kobayashi's problem by proving that every one-relator monoid is of type left-$\FP_\infty$ and right-$\FP_\infty$. Looking at the diagram in Figure~\ref{fig_FCRSHomologicalProperties} it is clear that the remaining main open question in this area is to determine whether all one-relator monoids are of type bi-$\FP_\infty$. 
Establishing the property bi-$\FP_\infty$ is the best possible result 
on homological finiteness properties  
that one could hope to prove for one-relator monoids since it would imply all other previous results on such properties for 
this class.

One of the main applications of the general results we prove in this paper is to resolve this problem in the key case of one-relator monoids where the defining relation is of the form $r=1$.  
We obtain results in this paper on one-relator monoids of the form $\lb A \mid r=1 \rb$ by proving more general results for monoids defined by finite presentations of the form $\langle A\mid w_1=1,\ldots,w_k=1\rangle$. These are called \emph{special monoids} in the literature. Special monoids were studied in the sixties by Adjan~\cite{Adjan1966} and Makanin~\cite{Makanin66}. Adjan proved that the group of units of a one-relator special monoid is a one-relator group and solved the word problem for this class of one-relator monoids by reducing the word problem of the monoid to that of the group of units. Makanin proved more generally that the group of units of a $k$-relator special monoid is a $k$-relator group and showed that the monoid has decidable word problem if and only if its group of units does. Later Zhang~\cite{Zhang} gave a more modern approach to these results via the theory of string rewriting systems. 

The main general result in this paper reduces the problem of whether the homological finiteness property bi-$\FP_n$ holds in a special monoid, to the corresponding question for its group of units. In addition, our methods can be applied to relate the Hochschild cohomological dimension of the monoid to the cohomological dimension of its group of units. 
Throughout we shall use $\mathrm{cd}(G)$ to denote the cohomological dimension of a group $G$. 
Our first main result is:   

\begin{theorema}\label{thm:TheoremA}
Let $M$ be a finitely presented monoid of the form 
\[M = \lb A \mid w_1=1, \ldots, w_k=1 \rb\]
and let $G$ be the group of units of $M$.
\begin{enumerate}
\item[(i)]
If $G$ is of type $\FPn$ (for $1 \leq n \leq \infty$) then $M$ is of type bi-$
\FPn$.
\item[(ii)]
Furthermore, the Hochschild cohomological dimension of $M$ is bounded below by
$\mathrm{cd}(G)$
and bounded above by
$\mathrm{max}\{ 2, \mathrm{cd}(G) \}$.
  \end{enumerate}
\end{theorema}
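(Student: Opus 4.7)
The plan is to construct an explicit projective resolution of $\ZM$ over the enveloping algebra $\ZM^e = \ZM \otimes_{\mathbb{Z}} \ZM^{op}$, of length at most $\max\{2, \mathrm{cd}(G)\}$, whose terms in dimensions $\leq n$ are finitely generated whenever $G$ is of type $\FP_n$. Since bi-$\FP_n$ of $M$ and the Hochschild cohomological dimension of $M$ are precisely the corresponding invariants of $\ZM$ as a $\ZM^e$-module, such a resolution will simultaneously deliver part (i) and the upper bound in part (ii).

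First I would exploit the structure theory of special monoids going back to Adjan, Makanin and Zhang. Every element of $M$ admits a unique normal form $m = p g$, where $g \in G$ is a unit and $p$ is an irreducible non-invertible prefix, and dually a right-handed form $m = g' s$. Consequently $\ZM$ is free as a right $\ZG$-module on the set of irreducible prefixes and free as a left $\ZG$-module on the irreducible suffixes; these bases play the role of a Schreier transversal. Using the generators $A$ and the defining relators $w_i = 1$, one then sets up the low-dimensional portion (degrees $0,1,2$) of the bi-resolution in the spirit of the two-skeleton of the Squier complex, with only finitely many generators in each degree. To extend beyond degree two, I would splice in a resolution from the group: given a projective resolution $P_\bullet \to \mathbb{Z} \to 0$ witnessing that $G$ is of type $\FP_n$ (respectively, of cohomological dimension $\mathrm{cd}(G)$), the aim is to identify the kernel of the degree-two boundary as an induced module of the form $\ZM^e \otimes_{\ZG^e} N$ and to extend by applying the exact induction functor $\ZM^e \otimes_{\ZG^e}(-)$ to $P_\bullet$. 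This forces the total length to be at most $\max\{2, \mathrm{cd}(G)\}$ and preserves finite generation in each degree.

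For the lower bound in part (ii) --- that $\mathrm{cd}(G)$ is at most the Hochschild cohomological dimension of $M$ --- I would use that $G$ is a natural retract of $M$. The projection $M \to G \cup \{0\}$ sending non-invertible elements to zero extends to a ring homomorphism $\ZM \to \ZG$ which, combined with the inclusion $\ZG \hookrightarrow \ZM$, exhibits $\ZG$ as a direct summand of $\ZM$ as a $(\ZG,\ZG)$-bimodule. Restriction of a projective $\ZM^e$-resolution of $\ZM$ along $\ZG^e \hookrightarrow \ZM^e$ then produces a $\ZG^e$-resolution of $\ZM$; since $\ZM$ is free over $\ZG$ on both sides, this restriction preserves projectivity, and taking the $\ZG$-summand yields a projective $\ZG^e$-resolution of $\ZG$ of no greater length. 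The classical identification of the Hochschild cohomological dimension of a group with its group-cohomological dimension then delivers the inequality.

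The hard part will be the bimodule bookkeeping in the splicing step of the upper bound. In contrast with the one-sided $\FP_n$ results in our earlier work, both the left and right $M$-actions must be respected simultaneously, and the compatibility of the dimension-two boundary with the induction functor $\ZM^e \otimes_{\ZG^e}(-)$ is delicate. Identifying the kernel of the degree-two boundary as an \emph{induced} bimodule, equivariantly for \emph{both} actions, is where the real technical work will lie.
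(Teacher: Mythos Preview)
Your proposal and the paper's proof diverge at the essential point, and the gap you yourself flag is exactly where the paper's argument lives.

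The paper does \emph{not} try to induce from $\ZG^e$. Instead it introduces an auxiliary submonoid
\[
N=\{(x,y)\in R_1\times L_1^{op}: xy=1\}\subseteq M\times M^{op},
\]
proves the nontrivial fact that $N\cong G\ast C^*$ for a finite set $C$, and shows that $M\times M$ is a \emph{free} right $N$-set. These two ingredients make $\mathbb Z[M\times M^{op}]$ flat (indeed free) over $\mathbb ZN$, so that a left-$\FP_n$ resolution of $\mathbb Z$ over $\mathbb ZN$ induces up to a finitely generated resolution of $\mathbb Z[(M\times M)/N]$. The paper then shows that the quotient $\overleftrightarrow{\Gamma}/N$ of the two-sided Cayley graph is a forest and that the set $E$ of collapsed edges is a finitely generated free $M\times M^{op}$-set, giving a short exact sequence $0\to C_1(F)\to C_0(F)\to \ZM\to 0$ with $C_1(F)$ of projective dimension $\leq 1$ and $C_0(F)\cong \mathbb Z[M\times M^{op}]\otimes_{\mathbb ZN}\mathbb Z$. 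Both the bi-$\FP_n$ statement and the upper bound $\max\{2,\mathrm{cd}(G)\}$ fall out immediately.

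Your plan to splice a $\ZG^e$-resolution onto the $2$-skeleton of a Squier-type complex requires that the degree-two kernel be \emph{induced} from $\ZG^e$; you offer no mechanism for this, and I see no reason it should hold. The pair $G\times G^{op}$ does not act on $M\times M$ in a way that tracks the multiplicative constraint $m_1m_2=\text{const}$ along connected components of the two-sided Cayley graph; that constraint is precisely what $N$ encodes, and it is why the paper replaces $G\times G^{op}$ with $N$. Without an analogue of $N$ (or some other device that couples the two sides), the ``bimodule bookkeeping'' you anticipate is not merely delicate---it is the whole theorem.

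Your lower-bound argument via the retract $\ZM\twoheadrightarrow \ZG$ is sound and somewhat more direct than the paper's route, which instead quotes that the one-sided cohomological dimension of $M$ is at least $\mathrm{cd}(G)$ and that Hochschild dimension dominates the one-sided dimensions.
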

We shall then go on to apply this result to prove the following two-sided Lyndon's Identity Theorem for one-relator monoids of the form $\langle A\mid w=1\rangle$.

\begin{theoremb}\label{TheoremB}
Every one-relator monoid of the form $\langle A \mid r=1 \rangle$ is of type bi-$\FP_\infty$. Moreover, if $r$ is not a proper power then the one-relator monoid has Hochschild cohomological dimension at most $2$, while if $r$ is a proper power then the monoid has infinite Hochschild cohomological dimension.
  \end{theoremb}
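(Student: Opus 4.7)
The plan is to apply Theorem~A, reducing both parts to classical facts about one-relator groups. By the Adjan--Makanin theorem in the form given by Zhang~\cite{Zhang}, the group of units $G$ of $M = \lb A \mid r=1 \rb$ is a one-relator group. More precisely, $r$ factors uniquely as $r = x_1 x_2 \cdots x_\ell$ into \emph{minimal invertible words} (the minimal nonempty prefixes of $r$ representing invertible elements), and $G$ has a presentation $G = \lb X \mid \sigma = 1 \rb$ where $X$ consists of these minimal invertible words (viewed as distinct formal letters) and $\sigma = x_1 \cdots x_\ell$ is the corresponding word over $X$. This tightly links the monoid relator $r$ to the group relator $\sigma$.

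For the bi-$\FP_\infty$ claim, Magnus's classical work together with Lyndon's Identity Theorem~\cite{Lyndon1950} shows that every one-relator group is of type $\FP_\infty$. Hence Theorem~A(i) immediately gives that $M$ is of type bi-$\FP_\infty$.

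For the Hochschild cohomological dimension, Theorem~A(ii) gives the bounds $\mathrm{cd}(G) \leq \mathrm{Hcd}(M) \leq \max\{2, \mathrm{cd}(G)\}$, so it suffices to compute $\mathrm{cd}(G)$ in terms of $r$. The key observation is that $r$ is a proper power in $A^*$ if and only if $\sigma$ is a proper power in the free monoid on $X$: if $r = s^n$ with $n \geq 2$, uniqueness of the minimal invertible factorization forces the factorization of $r$ to be the $n$-fold concatenation of that of $s$, so $\sigma = \tau^n$; the converse follows by substitution from $X$ back to $A^*$. Combining this with the classical Karrass--Magnus--Solitar torsion theorem and Lyndon's Identity Theorem: if $r$ is not a proper power, then $G$ is torsion-free with $\mathrm{cd}(G) \leq 2$, so $\mathrm{Hcd}(M) \leq \max\{2,2\} = 2$; if $r$ is a proper power, then $G$ contains torsion, so $\mathrm{cd}(G) = \infty$ and the lower bound in Theorem~A(ii) forces $\mathrm{Hcd}(M) = \infty$.

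The main technical obstacle is the equivalence between $r$ being a proper power in $A^*$ and $\sigma$ being a proper power in the free monoid on $X$ via the invertible-word factorization. The forward direction is immediate from uniqueness of that factorization; the reverse requires showing that a proper-power factorization of $\sigma$ descends to one of $r$ under the substitution $X \to A^*$, which should follow from the structure theory of special monoids developed by Adjan~\cite{Adjan1966}, Makanin~\cite{Makanin66}, and Zhang~\cite{Zhang}.
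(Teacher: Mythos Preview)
Your proposal is correct and follows essentially the same route as the paper: apply Theorem~A, invoke Adjan's theorem that $G$ is a one-relator group, use Lyndon's Identity Theorem for $\FP_\infty$ and for $\mathrm{cd}(G)\leq 2$ in the torsion-free case, and use the lower bound in Theorem~A(ii) when $G$ has torsion. The paper's own proof is just a few lines citing these same ingredients; the only difference is that where you sketch the equivalence ``$r$ is a proper power in $A^*$ $\Leftrightarrow$ the one-relator presentation of $G$ has a proper-power relator $\Leftrightarrow$ $G$ has torsion'', the paper simply cites \cite[Lemma~3.18]{GraySteinbergDocumenta} for the implication ``$r$ not a proper power $\Rightarrow$ $G$ torsion-free''.

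One small caution on your sketch of that equivalence: your description of the presentation $G=\langle X\mid \sigma=1\rangle$ is slightly imprecise, since distinct minimal invertible pieces of $r$ (as words in $A^*$) can represent the same element of $G$, so the honest generating set is the set of such pieces modulo equality in $M$; this is how the Adjan--Makanin--Zhang presentation is actually built. This does not affect your proper-power argument, because the substitution map $X^*\to A^*$ factors through that identification and your two directions go through verbatim at the level of the quotient alphabet. Also note that you have the easy and hard directions reversed in your final paragraph: the direction $\sigma=\tau^n\Rightarrow r=s^n$ is immediate by substitution, while $r=s^n\Rightarrow \sigma=\tau^n$ is the one that uses invertibility of $s$ (from $s^n=1$) together with uniqueness of the indecomposable invertible factorisation.
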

As explained  in Section~\ref{sec:other} below, Theorem~B gives a positive solution for an important infinite family of base cases towards a proof of the more general conjecture that all one-relator monoids $\langle A\mid u=v \rangle$ are of type bi-$\FP_\infty$.

The property bi-$\FP_n$ is significantly more robust, and carries more information about a monoid, than either of the properties left-$\FP_n$ or right-$\FP_n$. For instance, for any monoid $M$ if we adjoin a zero element to form $M^0 = M \cup \{0 \}$, then the monoid $M^0$ will automatically be of type left- and right-$\FP_\infty$ while in contrast for any integer $n$ the monoid $M^0$ will be of type bi-$\FP_n$ if and only if $M$ itself is of type bi-$\FP_n$; see~\cite[Section~3]{GraySteinbergDocumenta}. There are other results that provide evidence that bi-$\FP_n$ is a more reasonably behaved property than its one-sided counterparts. 
For instance, in  
\cite{GraySteinbergInPrepr2025} we shall prove that 
a von Neumann  regular monoid with finitely many left and right ideals is of type bi-$\FP_n$ if and only if all of its maximal subgroups are of type $\FP_n$. 
For left- or right-$\FP_n$ this result fails to hold since any group with a zero element adjoined is left- and right-$\FP_\infty$.

The importance of Hochschild cohomology stems from the fact that it is the standard notion of cohomology for rings;~\cite{HochschildCoh},\cite[Chapter~9]{Weibel1994}, or~\cite{Mitchell1972}. See~\cite{witherspoon2020hochschild} for a recent survey article on Hochschild cohomology and its many important uses in algebra.  
The Hochschild cohomological dimension of a monoid bounds both its left and right cohomological dimension, but there are examples where all three dimensions are different; see~\cite{Guba1998, cheng1984separable}.

To prove our main results we shall need to introduce and develop an entirely new two-sided analogue of the classical Adjan--Makanin--Zhang theory of special monoids. 
This makes the results in this paper significantly more difficult than the corresponding one-sided results proved in~\cite[Section~3]{GraySteinbergDocumenta}. 
To give an example of this, implicit in the work of Adjan and Makanin, and 
explicit in Zhang, is the fundamental result that the 
submonoid of 
right units (dually left units) of a special monoid $M$ is isomorphic to the free product of the group of units and a finite rank free monoid. 
This algebraic description of the right units is key to relating the structure of special monoids and their groups of units. For the two-sided theory developed in this paper the right units $R$ and left units $L$ are replaced by a single monoid 
$N = \{ (x,y) \in R \times L: xy=1 \}$, which is a submonoid of $M\times M^{op}$. 
Here $M^{op}$ denotes the opposite of the monoid $M$ 
(see Section~\ref{sec:prel} below for the definition of this notion). 
It turns out, rather miraculously, that for any special monoid $M$ this submonoid $N$ of $M\times M^{op}$ is also isomorphic to a free product of the group of units of $M$ with a finite rank free monoid.

Another key ingredient for studying special monoids is the fact, proved in~\cite[Corollary 3.12]{GraySteinbergDocumenta}, that any two right Sch\"{u}tzenbeger graphs of a special monoid are isomorphic to each other (as directed labelled graph) and that modulo these graphs the Cayley graph has a tree-like structure. 
This structural description of the right Cayley graph together with results about the left-multiplication action of $M$ on it were then combined in~\cite[Theorem 3.20]{GraySteinbergDocumenta} to build a left equivalent classifying space for the monoid.
To prove the two-sided results in this paper we need to establish several new results 
that 
give a sufficient level of information about the structure of the two-sided Cayley graph of the monoid and the action of $M\times M^{op}$ upon it.   

In this way, the results in this paper make new contributions to the theory of special monoids by fully developing, for the first time, the two-sided theory. We note in passing that the success of the theory of special monoids has inspired the development of a parallel theory of special inverse monoids initiated in important work of Ivanov, Margolis and Meakin~\cite{ivanov2001one} and then developed further in several recent papers including~\cite{gray2020undecidability, gray2025maximal, gray2024groups}.

The paper is organised as follows. In Section~\ref{sec:prel} we give some preliminary definitions and results on presentations, rewriting systems and homological algebra. In Section~\ref{sec:2sided} we 
give some background  on the theory of finitely presented special monoids including results that give normal forms for the elements of the monoid. Section~\ref{sec:FreeProd:special} is devoted to proving that the monoid $N$, discussed above, has the structure of the free product of a group and a free monoid. In Section~\ref{sec:MxMFreeNSet:special} we prove that $M \times M$ is a free $N$-set and apply this in Lemma~\ref{lem:forst:quotient2} to show that the quotient $\overleftrightarrow{\Gamma}/N$ of the two-sided Cayley graph $\overleftrightarrow{\Gamma}$ of $M$ by $N$ is a forest. Section~\ref{sec:edge:module} is devoted to the proof that the set $E$ of edges collapsed when forming $\overleftrightarrow{\Gamma}/N$ is a finitely generated free $M \times M^{\mathrm{op}}$-set.
In Section~\ref{sec:main:theorem:special} we combine all the results from Sections~\ref{sec:FreeProd:special}--\ref{sec:edge:module} together to prove our main results Theorem~A and Theorem~B. Finally in Section~\ref{sec:other} we give a framework for attacking the problem of proving that all one-relator monoids $\lb A \mid u=v \rb$, explain how Theorem~B deals with an infinite family of base cases for that approach, and also show how to deal with the other infinite family of base cases by showing that if a non-special one-relator monoid $M$ has defining relation $u=v$ such that there is no nonempty word $r$ with $u,v \in A^*r \cap r A^*$, then $M$ is of type bi-$\FP_\infty$ and the Hochschild cohomological dimension at most $2$.

\section{Preliminaries}\label{sec:prel}

\subsection*{Presentations and complete rewriting systems} 

We recall some basic definitions and notation for monoid presentations and rewriting systems. For further details we refer the reader to~\cite[Chapter~12]{HoltBook}. 
Let $A$ be a nonempty set, that we call an alphabet, and let $A^*$ denote the free monoid of all words over $A$. Given two words $u,v \in A^*$ we write $u \equiv v$ to indicate that $u$ and $v$ are equal as words in $A^*$, in which case we say the words are \emph{graphically equal}.
A \emph{rewriting system} over $A$ is a subset $R$ of $A^* \times A^*$. The pair $\lb A \mid R \rb$ is called a \emph{monoid presentation}. The elements of $R$ are called \emph{rewrite rules} of the system, and the \emph{defining relations} of the presentation.  
We write $u \rightarrow_R v$ for words $u,v \in A^*$ if there are words $\alpha, \beta \in A^*$ and a rewrite rule $(\ell,r) \in R $ such that $u \equiv \alpha \ell \beta$ and $v \equiv \alpha r \beta$. We use $\rightarrow_R^*$ to denote the reflexive transitive closure of $\rightarrow_R$, and write $\leftrightarrow_R^*$ for the symmetric closure of $\rightarrow_R^*$. 
When the set of rewrite rules with respect to which we are working with is clear from context, we shall often omit the subscript $R$ and simply write $\rightarrow$, $\rightarrow^*$ and $\leftrightarrow^*$.
The relation $\leftrightarrow_R^*$ is a congruence on the free monoid $A^*$ and the quotient $A^* /  {\leftrightarrow_R^*}$ is the monoid defined by the presentation $\lb A \mid R \rb$. 
When working with a monoid $M$ defined by presentation $\lb A \mid R \rb$, given two words $u,v \in A^*$ we write $u=v$ (or sometimes we write $u =_M v$) to mean they are equal in $M$.

Given a rewriting system $R$ over an alphabet $A$, we say that a word $u \in A^*$ is \emph{irreducible} (or \emph{reduced}) if no rewrite rule can be applied to it, that is, there is no word $v$ such that $u \rightarrow v$. 
The rewriting system $\R$ is \emph{Noetherian} if there is no infinite chain of words $u_i \in A^*$ with $u_i \rightarrow u_{i+1}$ for all $i \geq 1$. We say that the rewriting system is \emph{confluent} if whenever $u \rightarrow^* u_1$ and $u \rightarrow^* u_2$ there is a word $v \in A^*$ such that $u_1 \rightarrow^* v$ and $u_2 \rightarrow^* v$. A rewriting system that is both Noetherian and confluent is called \emph{complete}. If $R$ is a complete rewriting system then each $\leftrightarrow^*$ equivalence class contains a unique irreducible word, so in this situation the irreducible words give a set of normal forms for the elements of the monoid that is defined by the presentation $\lb A \mid R \rb$.

\subsection*{The category of $M$-sets}
For any monoid $M$ a \emph{left $M$-set} consists of a set $X$ and a mapping 
$M \times X \rightarrow X$ written $(m,x) \mapsto mx$ called a \emph{left action}, such that $1x=x$ and $m(nx) = (mn)x$ for all $m,n \in M$ and $x \in X$. 
We define right $M$-sets dually. Note that right $M$-sets are the same thing as left $M^{op}$-sets, where $M^{op}$ is the \emph{opposite} of the monoid $M$ which is the monoid with the same underlying set $M$ and multiplication given by $x \cdot y = yx$.  An \emph{$M$-biset} is an $M\times M^{op}$-set.  We often write $(m_1,m_2)x=m_1xm_2$ in this case. 
A mapping $f\colon X \rightarrow Y$ between $M$-sets is said to be \emph{$M$-equivariant} if $f(mx) = mf(x)$ for all $x \in X$, $m \in M$. For a fixed monoid $M$ the collection of $M$-sets together with $M$-equivariant mappings form a category.

A left $M$-set $X$ is a \emph{free left $M$-set on $B$} if there is a subset $B \subseteq X$ such that each element of $X$ can be expressed uniquely as $mb$ with $m\in M$ and $b\in B$. 
In this situation we call $B$ a \emph{basis for $X$}. 
Free left $M$-sets have the usual universal property.  
The free left $M$-set on $B$ exists and can be realised as the set $M \times B$ with action $m(m',b) = (mm',b)$. Free right $M$-sets are defined dually.  If $X$ is a free left $M$-set with basis $B$, then $\mathbb ZX$ is a free left $\mathbb ZM$-module with basis $B$.

Given any right $M$-set $X$ we define a natural preorder relation $\leq$ on $X$ where $x \leq y$ if and only if $xM \subseteq yM$. We write $x \approx y$ if there is a sequence $z_1, z_2, \ldots, z_n$ of elements of $X$ such that for each $0 \leq i \leq n-1$ either $z_i \leq z_{i+1}$ or $z_i \geq z_{i+1}$. This is an equivalence relation and we call the $\approx$-classes of $X$ the \emph{weak orbits} of the $M$-set. This corresponds to the notion of the weakly connected components in a directed graph. If $X$ is a right $M$-set then we use $X/M$ to denote the set of weak orbits of the $M$-set. 

\subsection*{Homological algebra}
Let $R$ be a ring. Then a left $R$-module $V$ is said to be of \emph{type $\mathrm{FP}_n$} if it has a projective resolution $P_\bullet\to V\to 0$ such that $P_i$ is finitely generated for $0\leq i\leq n$. This is equivalent to having a free resolution that is finitely generated through degree $n$; see~\cite[Proposition 4.3]{BrownCohomologyBook}. We say that $V$ is of type $\FP_\infty$ if it has a projective (equivalently, free) resolution that is finitely generated in all degrees.   
A right $R$-module is $\mathrm{FP}_n$ if it is $\mathrm{FP}_n$ as a left $R^{op}$-module, 
and we say an $R$-$R$-bimodule (called here an $R$-bimodule) is $\mathrm{FP}_n$ if it is $\mathrm{FP}_n$ as a left $R^e=R\otimes_{\mathbb Z}R^{op}$-module.  Note that if $M$ is a monoid, then $\mathbb ZM^e\cong \mathbb Z[M\times M^{op}]$.  We also write $\mathrm{FP}_n(R)$ to indicate that we are referring to the ring $R$ in the case that ambiguity might arise.  
For convenience, all modules will be considered $\mathrm{FP}_{-1}$.
A ring $R$ is of type \emph{bi-$\mathrm{FP}_n$} if the $R$-bimodule $R$ is $\mathrm{FP}_n(R^e)$.
We say that a left $R$-module $V$ has \emph{projective dimension} at most $d$ if it has a projective resolution of length $d$.

A monoid $M$ is of type left-$\mathrm{FP}_n$ (respectively, right-$\mathrm{FP}_n$) if the trivial left (respectively, right) $\ZM$-module $\mathbb Z$ is of type $\mathrm{FP}_n$.  Kobayashi and Otto~\cite{KobayashiOtto2001} defined $M$ to be of type bi-$\mathrm{FP}_n$ if $\mathbb ZM$ is of type bi-$\mathrm{FP}_n$. So, 
the monoid $M$ is of type \emph{bi-$\FPn$}
if there is a projective resolution
\[
 \cdots \rightarrow P_1 \rightarrow
P_0 \rightarrow \mathbb ZM \rightarrow 0
\]
of the $\mathbb ZM$-bimodule $\mathbb ZM$, where $P_0, P_1, \ldots, P_n$ are finitely generated projective $\mathbb{Z}M$-bimodules. 
If a monoid is of type bi-$\FP_n$ then it is both of type left-$\FP_n$ and right-$\FP_n$; see 
\cite[Proposition 4.2]{KobayashiOtto2003}.

Kobayashi~\cite{Kobayashi2005} proved that if a monoid admits a presentation by a finite complete rewriting system then that monoid must be of type bi-$\FP_\infty$. See also~\cite[Section~11]{GraySteinbergAGT} for a topological proof of this using the theory of equivariant collapsing schemes (i.e., discrete Morse theory). 

The \emph{left (right) cohomological dimension} of a monoid $M$ is the projective dimension of the trivial left (right) $\ZM$-module $\Z$.
For a group $G$ the right cohomological dimension is equal to the  
left cohomological dimension and is just called the \emph{cohomological dimension of the group} and is denoted $\mathrm{cd}(G)$. 
The \emph{Hochschild cohomological dimension} of a monoid $M$, written $\dim M$, is the projective dimension of $\mathbb ZM$ as a $\mathbb Z[M\times M^{op}]$-module. The Hochschild cohomological dimension bounds both the left and right cohomological dimension of the monoid.  

We shall need the following result on modules of type $\mathrm{FP}_n$ which may be found in  
\cite[Proposition~1.4]{Bieri1976}.

\begin{Lemma}\label{t:bieri}
Let $R$ be a ring.  If $0\to A\to B\to C\to 0$ is an exact sequence of left $R$-modules such that 
$A$ is of type $\mathrm{FP}_{n-1}$ and 
$B$ is of type $\mathrm{FP}_n$, then $C$ is of type $\mathrm{FP}_n$.
\end{Lemma}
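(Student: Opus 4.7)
The plan is to proceed by induction on $n$, using a standard Horseshoe-style construction together with the companion fact that in a short exact sequence of left $R$-modules $0\to X\to Y\to Z\to 0$, if both $X$ and $Z$ are of type $\mathrm{FP}_m$ then so is $Y$ (this is itself easily proved by induction by splicing finite-rank free resolutions, and is uncontroversial). With the convention that every module is $\mathrm{FP}_{-1}$, the base case $n=0$ of Lemma~\ref{t:bieri} is immediate: $B$ finitely generated forces $C$ to be finitely generated, being a quotient of $B$.

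For the inductive step, assume the result for $n-1$ and suppose $A$ is $\mathrm{FP}_{n-1}$ and $B$ is $\mathrm{FP}_n$. Choose a surjection $\pi\colon F_0 \twoheadrightarrow B$ with $F_0$ a finitely generated free left $R$-module, and compose with $B\twoheadrightarrow C$ to obtain a surjection $F_0\twoheadrightarrow C$. Put $K=\ker(F_0\to B)$ and $K'=\ker(F_0\to C)$. Then $K\subseteq K'$ and the snake/kernel diagram gives a natural short exact sequence
\[
0\to K\to K'\to A\to 0.
\]
Since $F_0$ is finitely generated free and $B$ is $\mathrm{FP}_n$, a standard dimension-shifting argument (using Schanuel's lemma) shows $K$ is $\mathrm{FP}_{n-1}$. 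Combined with the hypothesis that $A$ is $\mathrm{FP}_{n-1}$, the companion fact above applied to the displayed sequence yields that $K'$ is $\mathrm{FP}_{n-1}$.

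Now consider the short exact sequence $0\to K'\to F_0\to C\to 0$. Since $F_0$ is finitely generated free and $K'$ is $\mathrm{FP}_{n-1}$, splicing a finite-rank resolution of $K'$ in degrees $\geq 1$ after $F_0$ produces a resolution of $C$ which is finitely generated in degrees $0,1,\ldots,n$, proving $C$ is $\mathrm{FP}_n$. The only mildly delicate point is the dimension-shifting step identifying $K$ as $\mathrm{FP}_{n-1}$, which is where Schanuel's lemma (or equivalently, the observation that $\mathrm{FP}_n$ is independent of the choice of partial finitely generated projective resolution) enters; everything else is a bookkeeping exercise in splicing resolutions, so I expect no genuine obstacles beyond invoking the Bieri–Eckmann-style toolbox.
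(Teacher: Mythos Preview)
Your argument is correct; this is the standard inductive proof via dimension-shifting and the Horseshoe lemma. The paper does not actually supply a proof of this lemma at all: it simply states the result and cites \cite[Proposition~1.4]{Bieri1976}. So there is nothing to compare beyond noting that your write-up is essentially the argument one finds in Bieri's notes, and the steps you flag (Schanuel's lemma for the dimension shift on $K$, the ``middle term'' closure of $\mathrm{FP}_m$ under extensions for $K'$) are exactly the ingredients used there.
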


We shall also make use of following result about projective dimension of $R$-modules. The following lemma can be proved by applying~\cite[Lemma~1.5]{Brown1982}; see~\cite[Corollary 2.4]{GraySteinbergDocumenta}. 

\begin{Lemma}\label{c:fp.resolved}
Let $R$ be a ring and suppose that  
\[C_n\longrightarrow C_{n-1}\longrightarrow\cdots\longrightarrow C_0\longrightarrow V\to 0\] is a partial resolution of an $R$-module $V$.
Let $d\geq n$ and suppose that $C_n\to C_{n-1}$ is injective. If $C_i$ has a projective dimension of at most $d-i$, for $0\leq i\leq n$, then $V$ has a projective dimension at most $d$.
\end{Lemma}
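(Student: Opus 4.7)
The plan is to prove the lemma by dimension shifting along the partial resolution. Introduce the kernels $K_i = \ker(C_i \to C_{i-1})$ for $0 \leq i \leq n-1$, with the convention $C_{-1} = V$ so that $K_{-1} = 0$ is replaced by viewing $V$ itself as $K_{-1}$ in the recursion. Exactness of the partial resolution yields short exact sequences
\[0 \longrightarrow K_i \longrightarrow C_i \longrightarrow K_{i-1} \longrightarrow 0\]
for $0 \leq i \leq n-1$, where $K_{-1}$ is interpreted as $V$ (so that $C_0 \to V$ is the cokernel map). The hypothesis that $C_n \to C_{n-1}$ is injective identifies $K_{n-1}$ with the image of $C_n$, giving an isomorphism $K_{n-1} \cong C_n$, and hence $\mathrm{pd}(K_{n-1}) \leq d - n$.

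Next I would prove, by downward induction on $i$, the estimate $\mathrm{pd}(K_i) \leq d - i - 1$ for every $i \in \{-1, 0, \ldots, n-1\}$. The base case $i = n-1$ follows from $K_{n-1} \cong C_n$ together with $\mathrm{pd}(C_n) \leq d - n = d - (n-1) - 1$. For the inductive step, the short exact sequence above together with the long exact sequence of $\mathrm{Ext}$ yields the standard inequality
\[\mathrm{pd}(K_{i-1}) \;\leq\; \max\bigl\{\mathrm{pd}(K_i) + 1,\; \mathrm{pd}(C_i)\bigr\}.\]
Plugging in the inductive bound $\mathrm{pd}(K_i) \leq d-i-1$ and the hypothesis $\mathrm{pd}(C_i) \leq d-i$ gives $\mathrm{pd}(K_{i-1}) \leq d-i = d - (i-1) - 1$, closing the induction. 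Setting $i = -1$ yields $\mathrm{pd}(V) \leq d$.

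The condition $d \geq n$ is used exactly to ensure that the base case bound $d - n \geq 0$ is meaningful and that each intermediate kernel really does admit a projective resolution of the prescribed length. The argument is pure homological bookkeeping, so there is no substantial obstacle; the only care needed is verifying that the base estimate for $K_{n-1}$ matches the recursive formula. Alternatively, as suggested by the parenthetical in the statement, one could deduce the result in one stroke by invoking Lemma~1.5 of Brown~\cite{Brown1982}, which constructs a length-$d$ projective resolution of $V$ by splicing projective resolutions of the $C_i$ via a mapping-cone (total-complex) construction; the dimension-shift argument above is essentially the hands-on version of that.
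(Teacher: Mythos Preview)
Your proof is correct. The paper does not actually prove this lemma; it simply records that the result follows from \cite[Lemma~1.5]{Brown1982} and points to \cite[Corollary 2.4]{GraySteinbergDocumenta} for details. Your final paragraph identifies exactly this route, so you have in fact reproduced the paper's ``proof'' as one option.

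The dimension-shifting argument you give in the main body is a more elementary, self-contained alternative: rather than splicing projective resolutions of the $C_i$ into a total complex (Brown's approach), you peel off one short exact sequence at a time and apply the standard bound $\mathrm{pd}(C) \leq \max\{\mathrm{pd}(A)+1,\mathrm{pd}(B)\}$ for $0\to A\to B\to C\to 0$. Both methods yield the same bound; yours avoids any external citation at the cost of a short induction, while Brown's construction has the advantage of actually producing an explicit projective resolution of $V$ of the right length. One small remark: your comment that $d\geq n$ is needed ``to ensure the base case bound $d-n\geq 0$ is meaningful'' is slightly imprecise --- the inductive inequalities go through formally regardless, and the hypothesis $d\geq n$ is really there so that the assumption $\mathrm{pd}(C_n)\leq d-n$ is not forcing $C_n=0$. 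This does not affect the validity of the argument.
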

In~\cite[Section~3]{GraySteinbergDocumenta} a key part of proving results relating the one-sided homological finiteness properties of a special monoid and those of its group of units involved analysing properties of the right and left Cayley graphs of the monoid.   
In this paper, to study bi-$\FP_n$ and the Hochschild cohomological dimension we shall need to prove results about the structure of the \emph{two-sided Cayley graph} of the monoid $M$ which is defined as follows.     
Let $M$ be a monoid generated by a finite set $A$.   
Then the 
\emph{two-sided Cayley graph 
$\overleftrightarrow{\Gamma} = \overleftrightarrow{\Gamma}(M,A)$ of $M$ with respect to $A$}    
is the directed labelled graph with vertex set $M \times M$ and directed edges $M \times A \times M$ where $(m_1, a, m_2)$ is a directed edge with initial vertex $(m_1, am_2)$ and terminal vertex $(m_1a, m_2)$.      
Just as a monoid acts naturally by left multiplication on its right Cayley graph, there is a natural action of $M \times M^{\mathrm{op}}$ on 
$\overleftrightarrow{\Gamma} = \overleftrightarrow{\Gamma}(M,A)$ 
via
\begin{gather*}
(m_1,m_2)\cdot (x,y) = (m_1x,ym_2)\\
 (m_1, m_2) \cdot (x,a,y) = (m_1 x, a, y m_2)
 \end{gather*}
where $(m_1, m_2) \in M \times M^{\mathrm{op}}$, $x,y \in M$ and  
$a \in A$.  These are free $M \times M^{\mathrm{op}}$-sets with bases $\{(1,1)\}$ and $\{1\}\times A\times \{1\}$, respectively.
The two-sided Cayley graph and this action of $M \times M^{\mathrm{op}}$ will play a central role in the proof of the mains result of this paper.   
See~\cite[Section~5.3]{GraySteinbergAGT} for more on the two-sided Cayley graph and its role in the study of two-sided homological and geometric finiteness properties of monoids.

\section{Finitely presented special monoids}\label{sec:2sided}

We now give some definitions and results due to Zhang \cite{Zhang} that will be used throughout the paper. Let $M = \lb A \mid w_1=1, \ldots, w_k=1 \rb$ and let $G$ be the group of units of $M$. By~\cite[Theorem~3.7]{Zhang}, $G$ has a group
presentation with $k$ defining relations. We call a word $u \in A^*$ \emph{invertible} if it represents an element of the group of units $G$. We say that an invertible word $u \in A^*$ is \emph{indecomposable} if it has no proper prefix that is invertible.  It is straightforward to see that every nonempty invertible word $u$ has a unique factorisation $u \equiv u_1 \ldots u_l$ where each $u_j$ is nonempty and indecomposable. For each relation word $w_i$ in the presentation of $M$ let $w_i \equiv w_{i,1} \ldots w_{i,n_i}$ be its decomposition into nonempty indecomposable invertible words. Given any invertible word $\beta \in A^+$ we call $\beta$ \emph{minimal} if it is indecomposable, and in addition $|\beta| \leq \mathrm{max}_{1 \leq i \leq k}|w_i|$. Now let $\Delta$ be the set of all minimal invertible words $\delta \in A^*$ such that $\delta = w_{i,j}$ in $M$ for some $1 \leq i \leq n, 1 \leq j \leq n_j$. It is immediate from the definition that $\Delta$ is a finite set of indecomposable invertible words. It is proved in~\cite[Section~3]{Zhang} that the elements represented by words from $\Delta$ give a finite generating set for the group of units $G$ of $M$.  
  
An important result of Zhang~\cite{Zhang} shows how $\Delta$ can be used to give an infinite rewriting system for the monoid $M$ modulo the group of units of $M$, in the following way.     
Let $<_s$ be the short-lex order on $A^*$ induced by some fixed order on the generators $A$. Then define a rewriting system
\[
S = \{ u \rightarrow v \mid u, v \in \Delta^*, u = v \ \mbox{in $M$} \ \mbox{and} \ u >_s v \}.
\]
It follows from \cite[Proposition 3.2]{Zhang} that $S$ is an infinite complete rewriting system defining the special monoid $M$.
We call a word $\alpha \in A^*$ \emph{irreducible} (or \emph{reduced}) if it is irreducible with respect to the rewriting system $S$. 
For any word $w \in A$ we shall
use $\overline{w}$ to denote the unique reduced word such that $w = \overline{w}$ in $M$.

The following lemma is folklore.

\begin{lemma}\label{lem:unions:of:invertible:general}
Let $T$ be a monoid and $r,s,t\in T$ with $rs$ and $st$ invertible.  Then $s$ and $rst$ are invertible.
  \end{lemma}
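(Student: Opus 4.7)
The plan is to first exhibit one-sided inverses for $s$, and then deduce full invertibility via the standard monoid fact that a left inverse and a right inverse of the same element must coincide.

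First I would pick $u, v \in T$ with $(rs)u = u(rs) = 1$ and $(st)v = v(st) = 1$, which exist because $rs$ and $st$ are invertible. Rebracketing gives $(ur)s = 1$ and $s(tv) = 1$, so $s$ has $ur$ as a left inverse and $tv$ as a right inverse. Then the standard calculation
\[
ur = (ur)\cdot 1 = (ur)\cdot s(tv) = (urs)(tv) = 1\cdot (tv) = tv
\]
shows these two inverses agree, so $s$ is invertible with $s^{-1} = ur = tv$.

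Once $s$ is invertible, invertibility of $rst$ is immediate: from $rs$ invertible and $s^{-1}$ available we get that $r = (rs)s^{-1}$ is a product of invertible elements, hence invertible, and similarly $t = s^{-1}(st)$ is invertible. Therefore $rst$ is a product of three invertible elements of the monoid $T$ and so lies in the group of units. The only subtlety worth noting is the associativity bookkeeping in the rebracketings $(rs)u = r(su)$ and $u(rs) = (ur)s$, together with the elementary lemma that two-sided invertibility in a monoid follows from having both a left and a right inverse; neither of these is a real obstacle, so I do not anticipate any serious difficulty in the proof.
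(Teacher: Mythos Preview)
Your proof is correct and follows essentially the same elementary approach as the paper: both arguments observe that $s$ has a left inverse coming from $(rs)^{-1}r$ and a right inverse coming from $t(st)^{-1}$, hence is invertible. The only cosmetic difference is in the treatment of $rst$: the paper writes down the two-sided inverse $(st)^{-1}s(rs)^{-1}$ directly and checks it, whereas you first deduce that $r=(rs)s^{-1}$ and $t=s^{-1}(st)$ are each invertible and then conclude that $rst$ is a product of units; your route incidentally yields the slightly stronger information that $r$ and $t$ are individually invertible.
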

\begin{proof}
First note that $rst[(st)^{-1}s(rs)^{-1}] = 1$ and $[(st)^{-1}s(rs)^{-1}]rst = 1$, whence $rst$ is invertible.  Since $rs,st$ are invertible, $s$ is left and right invertible, hence invertible.
\end{proof}

A key result for us is the following. 

\begin{lemma}\cite[Lemma~3.4]{Zhang}\label{lem:Zhang:Irred}
An irreducible word $u \in A^*$ is invertible if and only if
$u \in \Delta^*$.
\end{lemma}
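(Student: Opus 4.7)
The direction ($\Leftarrow$) is immediate: each $\delta\in\Delta$ is invertible by construction, so any word in $\Delta^*$ represents a product of elements of $G$ and is therefore invertible (irreducibility is not needed). The substance lies in the converse: given $u$ irreducible and invertible, I want to conclude $u\in\Delta^*$.

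The strategy uses the fact, recorded just before the statement, that $\Delta$ generates the group of units $G$: pick any $v\in\Delta^*$ with $u=_M v$. The plan is then to verify that the $S$-normal form of $v$ already lies in $\Delta^*$. Once this is done, completeness of $S$ (cited from Zhang's Proposition~3.2) ensures that this normal form is the unique irreducible word in the $=_M$-class of $v$; since $u$ is itself irreducible and $u=_M v$, this forces $u$ to coincide graphically with the normal form, yielding $u\in\Delta^*$.

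The step I expect to be the main obstacle is showing that $\Delta^*$ is closed under one step of $S$-rewriting. So suppose $v\equiv\delta_1\cdots\delta_m\in\Delta^*$ and a rule $u_1\to u_2$, with $u_1,u_2\in\Delta^*$, applies via $v\equiv\alpha u_1\beta\to\alpha u_2\beta$. I would aim to establish the alignment $u_1\equiv\delta_{i+1}\cdots\delta_j$ for some $0\leq i\leq j\leq m$, so that both $\alpha\equiv\delta_1\cdots\delta_i$ and $\beta\equiv\delta_{j+1}\cdots\delta_m$ automatically lie in $\Delta^*$, and hence the reduct $\alpha u_2\beta$ does too.

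For the alignment, suppose for contradiction $u_1$ begins strictly inside some $\delta_i$; write $\delta_i\equiv\delta_i'\delta_i''$ with $\delta_i'$ a non-empty proper prefix, so that $u_1$ starts with $\delta_i''$. Apply Lemma~\ref{lem:unions:of:invertible:general} with $r=\delta_i'$, $s=\delta_i''$ and $t$ the tail of $u_1$ after $\delta_i''$: both $rs\equiv\delta_i$ and $st\equiv u_1$ are invertible, so $s\equiv\delta_i''$ is invertible; but then $\delta_i'=_M\delta_i(\delta_i'')^{-1}$ represents an element of $G$, contradicting the indecomposability of $\delta_i\in\Delta$. A symmetric argument on the right end of $u_1$ rules out its terminating strictly inside some $\delta_j$. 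Iterating closure then shows the entire $S$-reduction sequence from $v$ stays inside $\Delta^*$, and combining with the preceding paragraph completes the proof.
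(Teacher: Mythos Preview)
The paper does not supply its own proof of this lemma; it is imported directly from Zhang as a citation, so there is no in-paper argument to compare against and the only question is whether your argument is correct.

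Your overall strategy---choose $v\in\Delta^*$ with $u=_M v$, reduce $v$ to its $S$-normal form while staying inside $\Delta^*$, and invoke uniqueness of normal forms---is a reasonable line of attack. The gap is in the alignment step. You assert that when a rule $u_1\to u_2$ applies to $v\equiv\delta_1\cdots\delta_m$, the redex satisfies $u_1\equiv\delta_{i+1}\cdots\delta_j$, and for the contradiction you write $\delta_i\equiv\delta_i'\delta_i''$ and say ``so that $u_1$ starts with $\delta_i''$''. This silently assumes the redex extends at least to the end of $\delta_i$. That need not hold: the redex can sit strictly in the interior of a single $\delta_i$, and then $\delta_i''$ is \emph{not} a prefix of $u_1$, so your application of Lemma~\ref{lem:unions:of:invertible:general} does not fire and no contradiction with indecomposability is obtained.

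This situation genuinely occurs. Take $M=\langle a,b,c\mid abc=1,\ b=1\rangle$. Since $b=1$, one has $ac=1$ in $M$; the quotient killing $b$ is the bicyclic monoid, so $a$ is right- but not left-invertible, hence $a$ and $ab(=a)$ are non-invertible and $abc$ is indecomposable. Thus $abc,\,b\in\Delta$ (both equal $1=w_{2,1}$ in $M$ and have length $\le 3$), and $b\to\varepsilon$ is a rule of $S$. Applying it to $v\equiv abc\in\Delta^*$ (with $m=1$, $\delta_1=abc$) uses the redex $u_1=b$, which lies strictly inside $\delta_1$; here $\delta_1''=bc$ is not a prefix of $u_1$. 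Moreover $\alpha\equiv a$ and $\beta\equiv c$ are not in $\Delta^*$, so your intermediate conclusion $\alpha,\beta\in\Delta^*$ is simply false. (The reduct $ac$ does lie in $\Delta$, so closure of $\Delta^*$ under rewriting survives in this example---but not for the reason you give.) Establishing the closure in general requires a different argument, using more of the structure of $\Delta$ than just indecomposability of its members; this is where Zhang's original proof does real work.
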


Another result that will prove useful later on is the following normal form result of Otto and Zhang~\cite[Theorem 5.2 and Lemma~5.4]{otto1991decision}.  We shall say a word $w\in A^*$ is 
\emph{trivial} if it represents the trivial element of $M$. 
We say a subword $u$ of a word $w$ is a \emph{maximal invertible subword} if $u$ is an invertible word and $u$ is not properly contained in some other invertible subword of $w$.
Lemma~\ref{lem:unions:of:invertible:general} implies that any nonempty invertible subword $u$ of a word $w$ is contained in a unique maximal invertible subword of $w$.

\begin{lemma}\cite[Theorem 5.2]{otto1991decision}\label{lem:OttoZhang}
Let $M = \lb A \mid w_1=1, \ldots, w_k=1 \rb$.  Then every word $u \in A^*$ can be uniquely factorised as $u\equiv u_0a_1u_1 \ldots a_m u_m$ where $a_i \in A$, for $1 \leq i \leq m$, and $u_j$ is a maximal invertible subword of $u$ for $0 \leq j \leq m$.  
We call this the Otto-Zhang normal form of $u$.  Moreover, if $v \in A^*$ with $u=v$ in $M$, then the Otto-Zhang normal form of $v$ is $v \equiv v_0a_1v_1 \ldots a_m v_m$ where
 $u_i = v_i$ in $M$ for $i=0,\ldots, m$.
  \end{lemma}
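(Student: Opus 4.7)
The proof splits naturally into two parts: existence and uniqueness of the factorisation, and its $M$-invariance. For the first part my plan is to show that the non-empty maximal invertible subwords of $u$ are well-defined and pairwise non-adjacent. Lemma~\ref{lem:unions:of:invertible:general} delivers both facts: if two non-empty invertible subwords of $u$ overlap then their union is invertible, so every non-empty invertible subword is contained in a unique maximal one; and two distinct maximal non-empty invertible subwords cannot be adjacent in $u$, since otherwise their concatenation would be an invertible extension of both. The letters of $u$ lying outside every non-empty invertible subword are therefore separated pairwise, and from the endpoints of $u$, by exactly the non-empty maximal invertible subwords; this yields the required unique factorisation $u \equiv u_0 a_1 u_1 \ldots a_m u_m$ in which the $a_i$ are the ``free'' non-invertible letters and each $u_i$ is the (possibly empty) maximal invertible subword in between.

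For the ``moreover'' clause I plan to exploit Zhang's complete rewriting system $S$ defined above. The decisive observation is that every rule $\ell \to r$ of $S$ has $\ell, r \in \Delta^*$, so both sides represent the same invertible element of $M$. Hence any one-step reduction $w \to_S w'$ replaces an occurrence of $\ell$ which, being invertible, lies entirely inside the unique maximal invertible subword $U$ of $w$ containing it; the rewrite then modifies $U$ into a subword $U'$ of $w'$ with $U =_M U'$, while leaving the portion of $w$ outside $U$ unchanged. Provided $U'$ remains a maximal invertible subword of $w'$, a single reduction preserves both the sequence and positions of the free letters $a_1, \ldots, a_m$, and replaces each invertible block $u_i$ by a word equal to it in $M$. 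Because $S$ is complete, $u$ and any $v$ with $u =_M v$ must reduce to a common irreducible normal form, from which the invariance of the factorisation together with the identities $u_i =_M v_i$ follow at once.

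The step I expect to be the main obstacle is the maximality check for $U'$ in $w'$: one must rule out that some invertible subword of $w'$ properly contains $U'$. The resolution uses $U =_M U'$ together with the fact that $w$ and $w'$ agree outside the regions occupied by $U$ and $U'$ respectively: any invertible subword $x U' y$ of $w'$ properly containing $U'$ corresponds to a substring $x U y$ of $w$ with $x U y =_M x U' y$, making $x U y$ an invertible subword of $w$ properly containing $U$, contradicting the maximality of $U$ in $w$. Once this boundary-preservation step is secured, the confluence and termination of $S$ supply the rest of the proof routinely.
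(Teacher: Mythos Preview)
The paper does not prove this lemma itself; it is cited as \cite[Theorem~5.2]{otto1991decision} and used as a black box. Your argument is therefore not being compared against an in-paper proof, and on its own merits it is essentially correct. The existence and uniqueness of the factorisation follow exactly as you say from Lemma~\ref{lem:unions:of:invertible:general} (overlapping or adjacent invertible subwords have invertible union, so maximal non-empty ones are disjoint and non-adjacent), and your strategy for the invariance clause---track the Otto--Zhang decomposition through single applications of Zhang's rewriting system $S$, then invoke completeness of $S$---is sound.

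There is one step you leave slightly implicit. After showing that $U'$ is a maximal invertible subword of $w'=\alpha U'\beta$, you conclude that the free letters $a_1,\ldots,a_m$ are unchanged. For this you also need that a letter of $\alpha$ or $\beta$ lies in some invertible subword of $w'$ if and only if it lay in one of $w=\alpha U\beta$. This follows by the same boundary trick: any invertible subword of $w'$ not wholly inside $\alpha$ or $\beta$ must meet $U'$ (or straddle its position, if $U'$ is empty), and then Lemma~\ref{lem:unions:of:invertible:general} manufactures an invertible proper extension of $U'$ in $w'$, contradicting its maximality; the symmetric argument with $U$ in $w$ handles the other direction. With that observation made explicit, your proof is complete. (Your phrase ``separated pairwise \ldots\ by exactly the non-empty maximal invertible subwords'' slightly overstates the first part, since consecutive free letters may have an empty $u_i$ between them, but you immediately accommodate this in the next clause.)
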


 We note that some of the $u_i$ and $v_j$ in the lemma above may be empty.

\begin{lemma}\label{lemma:OZ} Let $M = \lb A \mid w_1=1, \ldots, w_k=1 \rb$, let $u \in A^*$ and let  
  \[u\equiv u_0a_1u_1 \ldots a_m u_m \]
be the Otto-Zhang normal form of $u$. Then $\ov u\equiv \ov{u_0}a_1\ov{u_1}\cdots a_m\ov{u_m}$ and each $\ov{u_i}\in \Delta^*$. Moreover, this is the Otto-Zhang normal form of $\ov u$.
\end{lemma}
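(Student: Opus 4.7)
The plan is to use the uniqueness part of the Otto--Zhang normal form (Lemma~\ref{lem:OttoZhang}) to compare the normal form of $\overline{u}$ with that of $u$.

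First I would apply Lemma~\ref{lem:OttoZhang} directly to $\overline{u}$ to write down its Otto--Zhang normal form, say
\[
\overline{u} \equiv v_0 b_1 v_1 \cdots b_n v_n,
\]
with each $v_i$ a maximal invertible subword of $\overline{u}$ and each $b_j \in A$. Since $u = \overline{u}$ in $M$, the uniqueness clause of Lemma~\ref{lem:OttoZhang} forces $n = m$, $b_j = a_j$ for $1 \le j \le m$, and $v_i = u_i$ in $M$ for $0 \le i \le m$.

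Next I would observe that each $v_i$ must be irreducible with respect to $S$: it is a subword of the irreducible word $\overline{u}$, so any rewrite rule applicable to $v_i$ would also rewrite $\overline{u}$. Combined with $v_i = u_i$ in $M$, this forces $v_i \equiv \overline{u_i}$, since reduced forms are unique under the complete rewriting system $S$. Moreover, each $v_i$ is invertible (as it represents the same element of $M$ as the invertible word $u_i$), so by Lemma~\ref{lem:Zhang:Irred} it lies in $\Delta^*$; thus $\overline{u_i} \equiv v_i \in \Delta^*$.

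Combining the two previous steps yields the graphical identity
\[
\overline{u} \equiv \overline{u_0}\, a_1\, \overline{u_1}\, a_2 \cdots a_m\, \overline{u_m},
\]
and this factorisation coincides with the Otto--Zhang normal form of $\overline{u}$ already exhibited above, proving the final assertion. The only mild subtlety in the argument is the observation that a subword of an irreducible word is irreducible, which is immediate from the fact that the rules in $S$ act on contiguous subwords; the rest is a direct bookkeeping exercise using Lemmas~\ref{lem:Zhang:Irred} and~\ref{lem:OttoZhang}, so I do not anticipate any genuine obstacle.
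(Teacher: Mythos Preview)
Your proof is correct and somewhat slicker than the paper's. The paper argues in the opposite direction: it sets $v\equiv \ov{u_0}a_1\ov{u_1}\cdots a_m\ov{u_m}$ and then verifies directly that each $\ov{u_i}$ is a maximal invertible subword of $v$ (using Lemma~\ref{lem:unions:of:invertible:general} to rule out longer invertible subwords straddling some $a_j$), and that $v$ is reduced (because left-hand sides of rules in $S$ are nonempty invertible, hence lie inside some maximal invertible subword, all of which are already reduced). From $v=u$ in $M$ and $v$ reduced it concludes $v\equiv\ov u$.

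Your route instead starts from the Otto--Zhang normal form of $\ov u$ and invokes the comparison clause of Lemma~\ref{lem:OttoZhang} to match it against that of $u$; the identification $v_i\equiv\ov{u_i}$ then drops out from irreducibility of subwords. This uses the full strength of Lemma~\ref{lem:OttoZhang} as a black box and avoids the explicit maximality check via Lemma~\ref{lem:unions:of:invertible:general}. The paper's argument is more self-contained about \emph{why} the concatenated word is reduced, which is a fact re-used later in the paper; your argument is shorter but relies on the ``Moreover'' clause of Lemma~\ref{lem:OttoZhang} doing that work implicitly.
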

\begin{proof} 
To see this, it suffices to show that each $\ov{u_i}$ is a maximal invertible subword of $v \equiv \ov{u_0}a_1\ov{u_1}\cdots a_m\ov{u_m}$, and that $v$ is reduced. Suppose that $w$ is a maximal invertible subword of $v$. Since each $\ov {u_j}$ is invertible, it follows from Lemma~\ref{lem:unions:of:invertible:general} that if $w$ contains a letter from some $\ov {u_j}$ then it must contain the entire $\ov {u_j}$. It follows that either either $w\equiv \ov u_i$ for some $i$, or $w\equiv \ov {u_{i-1}}a_{i}\cdots a_k\ov{u_k}$ for some $1\leq i\leq k\leq m$.  But in the latter case $u_{i-1}a_i\cdots a_ku_k$ is an invertible word, a contradiction.  Thus each $\ov {u_j}$ is a maximal invertible subword of $v$. It now follows that $v$ is reduced since the left-hand side of each rewrite rule is nonempty and invertible, and each maximal invertible subword of $v$ is reduced. Finally,  by Lemma~\ref{lem:Zhang:Irred}, each $\ov{u_i}$ is a word in $\Delta^*$. 
\end{proof}
Let us say that a word $u \in A^*$, written in Otto-Zhang normal form
\[u\equiv u_0a_1u_1 \ldots a_m u_m \]
has \emph{no invertible suffix} if $u_m$ represents the identity of $M$, that is $\ov{u_m}$ is empty.  We define having no invertible prefix dually. In particular, a nonempty \emph{reduced} word $\mu$ has \emph{no invertible suffix} if the Otto-Zhang normal form of $\mu$ is $\mu\equiv \mu_0a_1\mu_1\cdots \mu_na_n$ with the $a_i\in A$ and the $\mu_i$ the maximal invertible subwords of $\mu$.  In particular, $\mu$ is not invertible. There is an obvious dual condition for any nonempty reduced word to have no invertible prefix.

\section{Proving that $N \cong G \ast C^*$ for some finite set $C$.}\label{sec:FreeProd:special}

Throughout this section
\[
M = \lb A \mid w_1=1, \ldots, w_k=1 \rb.
\]

Let $R_1$ and $L_1$ be the right and left units, respectively, of the monoid $M$.
Let $G = R_1 \cap L_1$ be the group of units of $M$.
Define a monoid
\[
N = \{ (u,v) : u \in R_1, v \in L_1, uv=1 \}
\]
with multiplication
\[
(a,b) \cdot (x,y) = (ax,yb).
\]
Note that $N$ is a submonoid of $R_1\times L_1^{\mathrm{op}}\subseteq M \times M^{\mathrm{op}}$.

We remark that while $N$ is a set of pairs of elements of $M$ we shall almost always write elements of $N$ as pairs $(u,v)$ where $u, v \in A^*$. This should of course be interpreted as meaning the pair $([u],[v])$ where $[u], [v] \in M$ are the elements of $M$ represented by the words $u$ and $v$, respectively.

By results of Zhang~\cite[Theorem 4.4]{Zhang} we know that $R_1 \cong G \ast C^*$ for some finite set $C$, and $L_1 \cong G \ast D^*$ for some finite set $D$, it follows that $N$ is a submonoid of $(G \ast C^*) \times (G \ast D^*) ^{\mathrm{op}}$. This implies that $N$ is group-embeddable, and thus, in particular, $N$ is two-sided cancellative. More generally $R_1 \times L_1^{op}$ is a group-embeddable and thus two-sided cancellative monoid.

Our aim in this section is to prove that $N \cong G \ast X^*$ for some finite set $X$.
The key to proving this result is to identify a suitable generating set with respect to which we can see that $N$ has this free product structure.

Given any invertible word $\alpha\in A^*$ we shall use $\alpha^{-1}$ to denote the reduced form of the inverse of the invertible element $\alpha$.
For example if $M = \lb A \mid r=1 \rb$ where $r \equiv (ab)(cd)(ab)$ then $(cd)(ab) = (ab)(cd)$ is the inverse of $ab$ and it may be verified that $(ab)^{-1} = (ab)(cd)$ is the normal form in this case.
By Lemma~\ref{lem:Zhang:Irred} we know that $\alpha^{-1} \in \Delta^*$ for every invertible word $\alpha\in A^*$, and if $\alpha$ is itself irreducible, then also $\alpha\in \Delta^*$.

Let $u,v\in A^+$ be reduced words such that $uv=1$ in $M$.  Then by definition of the Zhang rewriting system and the fact that $u,v$ are reduced, there must exist factorisations $u\equiv u_1u_2$ and $v\equiv v_1v_2$ with $u_2v_1\in \Delta^*$ and $u_2$ and $v_1$ both nonempty.  Of course, $u_2v_1$ is an invertible word.  In general, if $u,v$ are nonempty words, we say that an invertible word $z\in A^+$ \emph{cuts across} $(u,v)\in A^+\times A^+$ if $u\equiv u_1u_2$ and $v\equiv v_1v_2$ with $z\equiv u_2v_1$ and $u_2,u_1$ nonempty.
Let $(u,v) \in N$ with $u$ and $v$ both reduced. It follows from the definitions that $u=1$ in $M$ if and only if $v=1$ in $M$. Otherwise, $u \in A^+$ and $v \in A^+$ and the argument above shows that at least one invertible word cuts across $(u,v)$.
The collection of invertible words cutting across $(u,v)$ is partially ordered by subword inclusion, as subwords of $uv$.  

Here a subword of $uv$ cutting across $(u,v)$ is specified by choosing a letter $x$ in $u$ and a letter $y$ in $v$ and taking the unique subword of $uv$ that begins with that particular letter $x$ and ends with that particular letter $y$. Distinct pairs of such letters $x,y$ determine distinct subwords of $uv$, viewed as subwords of $uv$, even if graphically they are the same word. For example, suppose $M = \lb a,b \mid (ab)^6=1 \rb$. Let $u \equiv aba$ and $v \equiv bab$ so that $uv \equiv ababab$. Then this word $uv$ contains the word $abab$ as a subword in two different ways, once as a prefix $\underline{abab}ab$ of $uv$ and once as a suffix $ab\underline{abab}$ of $uv$. Each of these subwords $abab$ of $uv$ cuts across $(u,v) = (aba,bab)$ but, even though they are both equal to the word $abab$, they are distinct as subwords of $uv$, and thus they are two distinct elements in the poset of invertible words cutting across $(u,v)$.  
Furthermore since as subwords of $uv$ neither of these words $abab$ is contained in the other, they are incomparable in this poset. Note that the subword $ab$ cutting across $uv$, which is the underlined word in $ab\underline{ab}ab$, is also an element of the poset of invertible words cutting across $(u,v)$, and this element of the poset is contained, as a subword of $uv$, in both the subwords $abab$ of $uv$ and hence lies below each of those elements of the poset. 

Returning to the general argument, 
we now aim to show that set of invertible words cutting across $(u,v)$, if nonempty, is a lattice.  Let us prove a more general result first.

\begin{Prop}\label{p:lattice.vers1}
Let $u\in A^+$ and $v$ be a nonempty subword of $u$. If some invertible subword of $u$ contains $v$, then the set of invertible subwords of $u$ containing $v$ is a lattice ordered by subword inclusion.  Moreover, if $\delta\equiv u_1vu_2$ is the unique smallest element of this lattice, then $u_1$ has no nonempty invertible prefix and $u_2$ has no nonempty invertible suffix.
\end{Prop}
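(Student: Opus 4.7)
My plan is to identify the lattice structure explicitly and then derive the minimality property from it. Fix the occurrence of $v$ in $u$ by writing $u \equiv xvy$ with $x,y \in A^*$. Any subword of $u$ containing this occurrence of $v$ has the form $svt$ where $s$ is a suffix of $x$ and $t$ is a prefix of $y$, and the subword order on such words corresponds to componentwise comparison of the pair $(|s|,|t|)$. Thus the poset in question embeds into a product of two finite chains, and to show it is a lattice it suffices to show that the coordinatewise minimum and maximum of two such pairs still yield invertible words.

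The only nontrivial configuration is when two invertible subwords $s_1vt_1$ and $s_2vt_2$ are incomparable, so, after swapping if necessary, $|s_1|<|s_2|$ while $|t_2|<|t_1|$. Factor $s_2 \equiv s_2's_1$ and $t_1 \equiv t_2t_1'$, and set $r = s_2'$, $s = s_1vt_2$, $t = t_1'$. Then $rs \equiv s_2vt_2$ and $st \equiv s_1vt_1$ are both invertible by hypothesis, so Lemma~\ref{lem:unions:of:invertible:general} simultaneously yields that $s = s_1vt_2$ is invertible (giving the meet) and that $rst = s_2vt_1$ is invertible (giving the join). Hence the set of invertible subwords of $u$ containing $v$ is a lattice, and since $u$ has only finitely many subwords, the lattice is finite; being nonempty by hypothesis, it admits a least element $\delta \equiv u_1vu_2$.

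For the moreover clause, suppose toward a contradiction that $u_1 \equiv p u_1'$ with $p$ a nonempty invertible prefix. Since $\delta$ is invertible in $M$ and $p$ is invertible, the element $u_1'vu_2 = p^{-1}\delta$ of $M$ is invertible as well. But $u_1'vu_2$ is a proper subword of $\delta$ still containing $v$, contradicting the minimality of $\delta$. The argument for $u_2$ is symmetric, working with an invertible suffix on the right.

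The only genuinely interesting step is the invertibility transfer for the ``mixed'' words $s_1vt_2$ and $s_2vt_1$ via Lemma~\ref{lem:unions:of:invertible:general}; everything else is bookkeeping. The main subtlety to watch for is keeping track that $v$ denotes a chosen occurrence rather than merely a word, so that the containment ordering on subwords of $u$ is well defined.
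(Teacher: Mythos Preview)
Your proof is correct and follows essentially the same approach as the paper: both arguments reduce to the incomparable case and apply Lemma~\ref{lem:unions:of:invertible:general} to the overlap factorisation to obtain the meet and join, and both derive the ``moreover'' clause by cancelling an invertible prefix (resp.\ suffix) to contradict minimality. Your explicit framing via the embedding into a product of two finite chains is a nice touch but not a substantive departure.
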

\begin{proof}
We being by showing the poset is a lattice. Let $x,y\in A^+$ be invertible subwords of $u$ containing $v$.  We must show that $x,y$ have a join and meet in the collection of invertible subwords of $v$ containing $u$.  Write $x\equiv x_1vx_2$ and $y\equiv y_1vy_2$.  Without loss of generality assume that $|x_1|\leq |y_1|$.  If $|x_2|\leq |y_2|$, then $x$ is a subword of $y$ and so the meet and join in this case are clearly given by $x$ and $y$, respectively.  So assume that $|x_2|>|y_2|$.  Then it suffices to show that $y_1vx_2$ and $x_1vy_2$ are invertible, as these will then be the meet and join,  respectively, of $x,y$ in this poset.  Write $y_1\equiv \alpha x_1$ and $x_2\equiv y_2\beta$.  Then $y_1vy_2\equiv \alpha x_1vy_2$ and $x_1vx_2\equiv x_1vy_2\beta$ are invertible, and so $x_1vy_2$ and $y_1vx_2\equiv \alpha x_1vy_2\beta$ are invertible by Lemma~\ref{lem:unions:of:invertible:general}, as required.

For the final statement, if $u_1\equiv u_1'u_1''$ with $u_1'$ invertible, then $u_1''vu_2$ is invertible and hence by minimality of $u$, we must have $u_1''\equiv u_1$, and so $u_1'$ is empty.  Dually $u_2$ has no nonempty invertible suffix.
\end{proof}

\begin{Prop}\label{p:lattice.cuts}
Let $(u,v)\in A^+\times A^+$.  Suppose that some invertible word cuts across $(u,v)$.  Then the poset of invertible words cutting across $(u,v)$ is a lattice.
\end{Prop}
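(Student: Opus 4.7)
The plan is to reduce Proposition~\ref{p:lattice.cuts} directly to Proposition~\ref{p:lattice.vers1} by passing to the concatenated word $uv$ with a distinguished two-letter subword sitting at the boundary between $u$ and $v$. Concretely, write $u\equiv u'a$ and $v\equiv bv'$ with $a,b\in A$, and let $v_0\equiv ab$ denote the specific occurrence of $ab$ in $uv$ that straddles the boundary position between $u$ and $v$.

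The key observation is that the invertible words cutting across $(u,v)$, viewed as positioned subwords of $uv$, coincide exactly with the invertible subwords of $uv$ that contain this distinguished occurrence of $v_0$. Indeed, if $z\equiv u_2v_1$ is a cut arising from factorisations $u\equiv u_1u_2$ and $v\equiv v_1v_2$ with $u_2,v_1$ both nonempty, then $u_2$ ends in the letter $a$ and $v_1$ begins with the letter $b$, so $z$ is an invertible subword of $uv$ containing the designated occurrence of $v_0$. Conversely, any invertible subword of $uv$ containing this occurrence of $v_0$ automatically has a nonempty portion in $u$ ending in $a$ and a nonempty portion in $v$ beginning with $b$, hence determines a cut across $(u,v)$. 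This bijection manifestly respects subword inclusion on both sides.

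The hypothesis that some invertible word cuts across $(u,v)$ translates to the statement that the set of invertible subwords of $uv$ containing the distinguished occurrence of $v_0$ is nonempty, which is precisely the hypothesis of Proposition~\ref{p:lattice.vers1} applied with $uv$ playing the role of $u$ and $v_0$ playing the role of $v$. That proposition then yields the lattice structure, which transports along the bijection above to give the desired conclusion.

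I do not anticipate any serious obstacle: the result is essentially a formal consequence of the previous proposition. The only minor point to be careful about is that the meet and join constructions supplied by the proof of Proposition~\ref{p:lattice.vers1} remain within the class of subwords containing the boundary $v_0$ (so that they still correspond to cuts), but this is already built into the statement of that proposition. If the proof were to proceed instead from scratch without invoking Proposition~\ref{p:lattice.vers1}, the harder part would be verifying that when two cuts $z\equiv u_2v_1$ and $z'\equiv u_2'v_1'$ partially overlap (say $|u_2|\leq |u_2'|$ but $|v_1|>|v_1'|$), the ``swapped'' words $u_2v_1'$ and $u_2'v_1$ are still invertible; this is precisely the content of Lemma~\ref{lem:unions:of:invertible:general} applied inside the proof of Proposition~\ref{p:lattice.vers1}.
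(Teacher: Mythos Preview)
Your proposal is correct and follows essentially the same approach as the paper: identify the invertible words cutting across $(u,v)$ with the invertible subwords of $uv$ containing the two-letter boundary subword $ab$, and then invoke Proposition~\ref{p:lattice.vers1}. The paper's proof is simply a terser version of exactly this argument.
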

\begin{proof}
Let $a$ be the last letter of $u$ and $b$ the first letter of $v$ and consider the subword $ab$ of the word $uv$ where $ab$ cuts across $(u,v)$. Then the invertible words cutting across $(u,v)$ are the subwords of $uv$ containing this particular subword $ab$ of $uv$.  Therefore, this poset is a lattice by Proposition~\ref{p:lattice.vers1}.
\end{proof}

\begin{definition}\label{def:height}
Given any pair $(\mu,\gamma)\in A^+\times A^+$ such that some invertible word cuts across $(\mu,\gamma)$, we define the height of $(\mu,\gamma)$ to be the height of the lattice of invertible words cutting across $(\mu,\gamma)$, i.e., the length of the longest chain of invertible words cutting across $(\mu,\gamma)$.
  \end{definition}

Clearly since the lattice in Definition~\ref{def:height} is always finite, it must have finite height, and so the height of a pair of words $(\mu,\gamma)\in A^+\times A^+$, such that some invertible word cuts across $(\mu,\gamma)$, is always finite.   

We shall later need the following property of height $0$ pairs.

\begin{lemma}\label{lem:ht.zero}
Let $\mu,\gamma\in A^*$ be reduced  with $\mu\gamma$ invertible but $\mu$ noninvertible (or, equivalently, $\gamma$ noninvertible).  If $(\mu,\gamma)$ has height $0$, then $\mu$ has no nonempty left invertible prefix and $\gamma$ has no nonempty right invertible suffix.
\end{lemma}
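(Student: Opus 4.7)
My plan is to deduce the lemma from the final statement of Proposition~\ref{p:lattice.vers1}, via the identification used in the proof of Proposition~\ref{p:lattice.cuts} of the lattice of invertible words cutting across $(\mu,\gamma)$ with a lattice of invertible subwords of $\mu\gamma$.  First I would note that since $\mu\gamma$ is invertible and $\mu$ is noninvertible, the reduced words $\mu$ and $\gamma$ are both nonempty.  The invertible word $\mu\gamma$ itself cuts across $(\mu,\gamma)$ (take the factorisations $\mu\equiv\emptyset\cdot\mu$ and $\gamma\equiv\gamma\cdot\emptyset$), so the lattice of invertible words cutting across $(\mu,\gamma)$ is nonempty and contains $\mu\gamma$ as its maximum.

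Next, the hypothesis that $(\mu,\gamma)$ has height $0$ means, by Definition~\ref{def:height}, that the longest chain in this lattice has length $0$; equivalently, the lattice is a singleton.  That singleton must then be $\mu\gamma$, so $\mu\gamma$ is the unique smallest element of the lattice.  As in the proof of Proposition~\ref{p:lattice.cuts}, this lattice coincides with the lattice of invertible subwords of $\mu\gamma$ containing the junction $ab$, where $a$ is the last letter of $\mu$ and $b$ is the first letter of $\gamma$.  Writing $\mu\gamma\equiv u_1(ab)u_2$ with $u_1$ equal to $\mu$ minus its last letter and $u_2$ equal to $\gamma$ minus its first letter, the final statement of Proposition~\ref{p:lattice.vers1} then gives that $u_1$ has no nonempty invertible prefix and $u_2$ has no nonempty invertible suffix.

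Finally, I would transfer this conclusion from $u_1,u_2$ back to $\mu,\gamma$.  If $\mu$ had a nonempty invertible prefix $\tau$, then since $\mu$ itself is noninvertible, $\tau$ would be a proper prefix of $\mu$, hence a prefix of $u_1$, contradicting the previous paragraph.  The case of $\gamma$ is strictly dual; note also that because every prefix of the right-invertible word $\mu$ is automatically right invertible, ``no nonempty left invertible prefix'' in the lemma statement is equivalent to ``no nonempty invertible prefix'', which is exactly what this argument supplies.  I do not foresee any serious obstacle; the only step requiring care is confirming that height $0$ forces the singleton lattice to consist of $\mu\gamma$ itself (so that Proposition~\ref{p:lattice.vers1} applies with $\delta\equiv\mu\gamma$), which is immediate once one observes that $\mu\gamma$ is automatically the maximum of the lattice.
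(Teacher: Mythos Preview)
Your proposal is correct and follows essentially the same approach as the paper: both arguments apply the final statement of Proposition~\ref{p:lattice.vers1} to the minimal element $\mu\gamma\equiv \mu'(ab)\gamma'$ of the lattice, deduce that $\mu'$ has no nonempty invertible prefix (and dually for $\gamma'$), upgrade this to ``no nonempty left invertible prefix'' using that prefixes of $\mu$ are right invertible, and then pass from $\mu'$ to $\mu$ via noninvertibility of $\mu$. Your write-up is simply more explicit about why height~$0$ forces the lattice to be the singleton $\{\mu\gamma\}$, which the paper leaves implicit.
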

\begin{proof}
Write $\mu\equiv \mu'a$ and $\gamma\equiv b\gamma'$ with $a,b\in A$.   Note that $\mu'$ is right invertible and $\gamma'$ is left invertible, and so  $\mu'$ has no nonempty left invertible prefix and $\gamma'$ has no nonempty right invertible suffix by Proposition~\ref{p:lattice.vers1}.  Since $\mu$ and $\gamma$ are also not invertible, but are right and left invertible, respectively, the lemma follows.
\end{proof}

We are now ready to provide a finite generating set for $N$.

\begin{lemma}
\label{lem_NGenAdvanced_newExtra}
The monoid $N$ is generated by the set
$X \cup Y$ where
\[Y=\{(\delta,\delta^{-1}): \delta\in \Delta\}\] and
$X$ consists of all pairs $(\delta_1,\delta_2\delta\inv)$ such that:
\begin{enumerate}
\item  $\delta_1,\delta_2\in A^+$ are reduced;
\item  $\delta\equiv \delta_1\delta_2$ is invertible;
\item $(\delta_1,\delta_2)$ has height $0$;
\item  $\delta_1$ has no invertible suffix.
\end{enumerate}
Furthermore these four conditions imply that $\delta \equiv \delta_1 \delta_2 \in \Delta$, and hence the set $X$ is finite.  
Therefore $N$ is finitely generated by the set $X \cup Y$.  
\end{lemma}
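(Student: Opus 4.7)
The plan is to prove that $X\cup Y$ generates $N$ by induction on $|u|$, where $(u,v)\in N$ is represented by reduced words $u,v\in A^*$. The base case $u\equiv\epsilon$ forces $v\equiv\epsilon$ (as $v$ is reduced and represents the identity), giving $(u,v)=1_N$ as the empty product. The inductive step splits according to whether $u$ has a nonempty invertible prefix.

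If $u$ has a nonempty invertible prefix, Lemma~\ref{lemma:OZ} implies its maximal invertible prefix $u_0$ is a reduced word in $\Delta^*$, which factors as $u_0\equiv\delta w$ with $\delta\in\Delta$ the leftmost indecomposable piece. I will peel off $(\delta,\delta^{-1})\in Y$ from the left, obtaining $(u',v')\in N$ with $u\equiv\delta u'$ as words and $|u'|<|u|$; existence and uniqueness of the corresponding $v'$ follow from the cancellativity of $N\subseteq R_1\times L_1^{\mathrm{op}}$ (itself a product of free products of groups and free monoids). The inductive hypothesis then applies.

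If $u$ has no nonempty invertible prefix, I will construct an $X$-generator to peel off. Take $\delta_1$ to be the shortest nonempty prefix of $u$ admitting a reduced extension $\delta_2\in A^+$ with $\delta_1\delta_2$ invertible (this exists since $(\delta_1,\delta_2):=(u,v)$ works), and take $\delta_2$ to be the shortest such extension. Conditions~(1) and~(2) of $X$ are immediate. For~(3), any invertible word $z\neq \delta$ cutting across $(\delta_1,\delta_2)$ would give either a shorter $\delta_2$-extension of $\delta_1$ (contradicting minimality of $\delta_2$) or, writing $\delta_1\equiv\delta_1''\delta_1'$, $\delta_2\equiv\delta_2'\delta_2''$, $z\equiv\delta_1'\delta_2'$, an application of Lemma~\ref{lem:unions:of:invertible:general} to the factorisation $\delta\equiv\delta_1''z\delta_2''$ would force the proper prefix $\delta_1''$ of $\delta_1$ to be right-invertible and hence to admit a reduced extension to an invertible word, contradicting minimality of $\delta_1$. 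For~(4), a nonempty invertible suffix $s$ of $\delta_1$ with $\delta_1\equiv\delta_1''s$ would similarly make $\delta_1''$ right-invertible via Lemma~\ref{lem:unions:of:invertible:general} applied to the factorisation $\delta\equiv\delta_1''s\delta_2$; minimality of $\delta_1$ then forces $\delta_1''\equiv\epsilon$, whence $\delta_1\equiv s$ is invertible, contradicting the case hypothesis.

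For the closing claim that the four conditions imply $\delta\in\Delta$: condition~(4) forces $\delta_1$ to be noninvertible (else it would be an invertible suffix of itself), so Lemma~\ref{lem:ht.zero} applies and gives that $\delta_1$ has no nonempty left-invertible prefix. Any proper nonempty invertible prefix of $\delta$ is either contained in $\delta_1$ (ruled out by this, since invertibility implies left-invertibility) or crosses the $(\delta_1,\delta_2)$ boundary (ruled out by height~$0$, which says $\delta$ is the unique invertible word cutting across). Thus $\delta$ is indecomposable. Combined with Zhang's structural bounds on the lengths of reduced indecomposable invertible words in special monoids (namely that they are equivalent in $M$ to some relator piece $w_{i,j}$ and have length at most $\max_i|w_i|$), we deduce $\delta\in\Delta$; the finiteness of $X$ then follows, since $\Delta$ is finite and each $\delta\in\Delta$ admits only finitely many splittings $\delta\equiv\delta_1\delta_2$ of the required form. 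The main technical obstacle throughout will be the careful and repeated use of Lemma~\ref{lem:unions:of:invertible:general} to translate invertibility of subwords cutting across $(\delta_1,\delta_2)$ into right-invertibility of strictly shorter prefixes of $\delta_1$—this is the precise mechanism by which the twin minimalities of $\delta_1$ and $\delta_2$ enforce all of the $X$-conditions as well as the indecomposability of $\delta$.
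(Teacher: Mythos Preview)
Your Case~(b) has a genuine gap. You peel the $X$-generator off on the left, writing $(u,v)=(\delta_1,\delta_2\delta^{-1})(u',v')$ with $u\equiv\delta_1 u'$; for $(u',v')$ to lie in $N$ you need $u'\in R_1$. But $u'$ is a \emph{suffix} of $u$, and suffixes of right-invertible elements need not be right-invertible in a special monoid. (Cancellativity of $N$, which you invoke, gives only uniqueness of a left quotient, never existence.) Concretely, in $M=\langle a,b,c\mid abac=1\rangle$ take $(u,v)=(aba,c)\in N$. The group of units here is trivial, so $u$ has no nonempty invertible prefix and you are in Case~(b); moreover every nonempty prefix of $u$ is right-invertible, so your $\delta_1$ is the first letter $a$, whence $u'\equiv ba$. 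But $ba\notin R_1$: the initial $b$ can never be absorbed into an occurrence of $abac$, so $ba\cdot x\neq 1$ for all $x$. Hence no $(u',v')\in N$ satisfies your factorisation.

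The paper avoids this by peeling generators off on the \emph{right}. It takes $\delta$ to be the minimal invertible word cutting across $(u,v)$, writes $u\equiv u'\delta_1$, $v\equiv\delta_2 v'$ with $\delta\equiv\delta_1\delta_2$, and factors $(u,v)=(u',\delta v')(\delta_1,\delta_2\delta^{-1})$ when $\delta_1$ has no invertible suffix (peeling off a $Y$-generator from the right otherwise). Now $u'$ is a \emph{prefix} of the right-invertible word $u$, hence automatically right-invertible, and indeed $u'(\delta v')=u'\delta_1\delta_2 v'=uv=1$ exhibits $(u',\delta v')\in N$ directly. The asymmetry built into $N$ (first coordinate in $R_1$, second in $L_1$) forces the induction to consume $u$ from the right, not the left.

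A smaller point: your finiteness argument establishes that $\delta$ is indecomposable but then appeals to Zhang's description of \emph{reduced} indecomposable invertible words. Conditions~(1)--(4) do not force $\delta\equiv\delta_1\delta_2$ to be reduced, so the non-reduced case needs a separate short argument, as in the paper: any applicable rewrite rule has left-hand side in $\Delta^+$ cutting across $(\delta_1,\delta_2)$, and condition~(4) together with height~$0$ then force a single $\Delta$-letter of that left-hand side to coincide with $\delta$.
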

\begin{proof}
Let $u,v\in A^*$ be reduced with $uv=1$ in $M$.  We prove by induction on $|u|$ that $(u,v)\in \langle X\cup Y\rangle$. If $u=1$, then since $uv=1$ in $M$ and $v$ is reduced, we must have $v=1$. Proceeding by induction, if $|u|>0$,  then $uv$ is not reduced (being $1$ in $M$), and so we can apply a rewrite rule to it.  Since $u,v$ are reduced, the left-hand side of any applicable rewrite rule cuts across $(u,v)$.  But the left-hand side of any rewrite rule is invertible and nonempty.  Hence there is an invertible word cutting across $(u,v)$.  Let  $\delta$ be the minimal invertible word cutting across $(u,v)$; this exists by Proposition~\ref{p:lattice.cuts}.  Write $u\equiv u'\delta_1$ and $v\equiv \delta_2v'$ with $\delta\equiv \delta_1\delta_2$. Suppose first that $\delta_1$ has no invertible suffix.  We compute $(u,v)=(u',\delta v')(\delta_1,\delta_2\delta\inv)$.  Note that $|u'|<|u|$ and $u'\delta v'\equiv uv$, and hence is $1$ in $M$.  So $(u',\delta v')\in \langle X\cup Y\rangle$ by induction, noting that $u'$ is a reduced word, whereas $(\delta_1,\delta_2\delta\inv)\in X$ by minimality of $\delta$ which implies $(\delta_1, \delta_2)$ has height zero.  Otherwise, $\delta_1\equiv \delta_1'\gamma$ where $\gamma$ is a nonempty invertible suffix.  Since $\delta_1$ is reduced, $\gamma\in \Delta^*$ by Lemma~\ref{lem:Zhang:Irred}.  Therefore, $(\gamma,\gamma\inv)\in \langle Y\rangle$.  But then $(u,v) = (u'\delta_1'\gamma,v) = (u'\delta_1',\ov{\gamma v})(\gamma,\gamma\inv)$, where $u' \delta_1'$ is a reduced word since it is a subword of the reduced word $u$. Since $u'\delta_1'\ov{\gamma v}=uv=1$ in $M$, and $|u'\delta_1'|<|u|$, we deduce that  $(u'\delta_1'\gamma,v)\in \langle X\cup Y\rangle$ by induction, and hence $(u,v)\in \langle X\cup Y\rangle$.

Finally, we show that conditions (1)--(4) together imply that $\delta\equiv\delta_1\delta_2 \in \Delta$ and hence the set $X$ is finite. There are two cases to consider. 

First suppose that $\delta\equiv\delta_1\delta_2$ is not reduced. Since $\delta_1$ and $\delta_2$ are both reduced by (1) it follows that the left hand side $\mu \in \Delta^+$ of some rewrite rule cuts across $(\delta_1, \delta_2)$.          
Write $\delta_1\delta_2 \equiv \delta_1' \mu_1 \mu_2 \delta_2'$ where $\delta_1 \equiv \delta_1' \mu_1$, $\delta_2 \equiv \mu_2 \delta_2'$ and $\mu_1\mu_2 \in \Delta^+$ with $\mu_1$ and $\mu_2$ both nonempty words. Since by (4) the word $\delta_1$ has no invertible suffix, it follows that $\mu_1 \not\in \Delta^+$ hence there is a word $\gamma \in \Delta$ such that $\gamma$ cuts across $(\delta_1,\delta_2)$. Since by (3) we have that $(\delta_1,\delta_2)$ has height zero this is only possible if $\delta \equiv \delta_1 \delta_2 \equiv \gamma \in \Delta$. This completes the proof that in this case $\delta \equiv \delta_1\delta_2 \in \Delta$. 

Next we consider the case that $\delta\equiv\delta_1\delta_2$ is reduced. Since $\delta_1\delta_2$ is reduced, then as it is invertible it belongs to $\Delta^*$ by Lemma~\ref{lem:Zhang:Irred}.  Since $\delta_1\notin \Delta^*$ (being noninvertible), the expression of $\delta_1\delta_2$ as a product of words from $\Delta$ must involve an element $\delta'\in\Delta$ cutting across $(\delta_1,\delta_2)$.  Since any element of $\Delta$ is invertible, we deduce that $\delta_1\delta_2\equiv \delta' \in \Delta$ by definition of height $0$. This completes the proof that in this case $\delta \equiv \delta_1\delta_2 \in \Delta$. 

We have shown that the conditions (1)--(4) together imply that $\delta\equiv\delta_1\delta_2 \in \Delta$ and hence, since $\Delta$ is finite, the set $X$ is also finite. 
\end{proof}

\begin{Rmk}
The idea behind the height zero condition in the definition of the set $X$ in Lemma~\ref{lem_NGenAdvanced_newExtra} is to ensure that the generating set obtained in minimal. In particular, if we were to remove the height zero assumption from the set $X$ in Lemma~\ref{lem_NGenAdvanced_newExtra} then in general it would result in a generating set for $N$ that is not minimal. To see this consider, for example, the monoid $M$ defined by the presentation $\lb a,b,c,d \mid (abcd)(bc)(abcd) = 1 \rb$. It is straightforward to verify that the minimal invertible pieces of this defining relator are $abcd$ and $bc$.  
Consider the following three elements from $N$   
\[
(b,c(bc)^{-1}), \quad  
(ab,cd(abcd)^{-1}), \quad 
(a,bcd(abcd)^{-1}).  
\] 
All three of theses elements has the form $(\delta_1, \delta_2 \delta^{-1})$ where $\delta \equiv \delta_1\delta_2 \in \Delta$ since it may be shown that $bc \in \Delta$ and $abcd \in \Delta$. However, if we included all three of these generators in a generating set for $N$ then that generating set would not be minimal since
\[
(a,bcd(abcd)^{-1}) \cdot (b,c(bc)^{-1}) = 
(ab,c(bc)^{-1}bcd(abcd)^{-1})  = 
(ab,cd(abcd)^{-1}).   
\]
Note that the pair $(ab,cd(abcd)^{-1})$ does not have height zero, and the product above shows that this element can be written as a product of the two height zero elements  
$(a,bcd(abcd)^{-1})$ and $(b,c(bc)^{-1})$.
\end{Rmk}

We are now in a position to prove that $N$ decomposes as a free product of its group of units and a free monoid of finite rank.  Let us recall briefly the normal form for a free product.  Suppose that $\{M_i:i\in I\}$ is a collection of monoids.  Consider the alphabet $B=\bigsqcup_{i\in I}M_i\setminus \{1\}$.  An element of $M_i\setminus \{1\}$ is called an $M_i$-syllable or a syllable of type $M_i$.  A word in $B^*$ is in normal form if it does not contain two consecutive syllables of the same type.  The length of a normal form is called its syllable length.  Each element of the free product $\Asterisk_{i\in I} M_i$ has a unique representative in normal form.

The following is a variant of the Ping Pong Lemma from group theory (see e.g. 
\cite[Chapter II.B]{de2000topics} or \cite[Proposition 12.2 of Chapter III]{LyndonAndSchupp}).

\begin{lemma}[Ping Pong Lemma]\label{l:ping.pong}
Let $N$ be a cancellative monoid, $G$ a subgroup of the group of units of $N$ and $X\subseteq N$.  Suppose that $N=\langle G\cup X\rangle$ and there is a right $N$-set $A$  such that $A=A_0\sqcup \bigsqcup_{x\in X}Ax$ with $Axg\subseteq A_0$ whenever $x\in X$ and $1\neq g\in G$, and $Ax\subsetneq A$. Then $\langle X\rangle$ is a free monoid with basis $X$ and $N\cong G\ast X^*$.
\end{lemma}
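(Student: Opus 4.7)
The plan is to show that the natural monoid homomorphism $\pi\colon G\ast X^*\to N$ induced by the inclusions of $G$ and $X$ is an isomorphism. Surjectivity is immediate from $N=\langle G\cup X\rangle$, so the real work is injectivity. The argument has two stages: a ping-pong showing that nontrivial normal forms have nontrivial image, then an induction leveraging the cancellativity of $N$ to promote this to full injectivity.

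\emph{Stage 1 (nontriviality).} Write each nontrivial element of $G\ast X^*$ in its unique alternating normal form $w=s_1\cdots s_n$ with syllables from $G\setminus\{1\}$ and $X^*\setminus\{1\}$ of alternating types, and show $\pi(w)$ moves some point of $A$, splitting on the type of the last syllable. If $s_n\in X^*$ ends in a letter $x\in X$, then iterating the inclusion $A\cdot y\subseteq Ay$ (which holds for all $y\in X$) yields $a\pi(w)\in Ax$ for every $a\in A$; choosing $a\notin Ax$ (possible since $Ax\subsetneq A$) gives $a\pi(w)\neq a$. If $w=g\in G\setminus\{1\}$ is a single $G$-syllable, then $\pi(w)=g\neq 1$ since $G$ embeds in $N$. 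Otherwise $s_n=g\in G\setminus\{1\}$ is preceded by an $X^*$-syllable ending in some $x\in X$, so the prefix sends any $a$ into $Ax$ and the final $g$ sends $Ax$ into $A_0$; picking $a$ in any $Ay$ (nonempty since $A\neq\emptyset$, forced by $Ax\subsetneq A$) shows $a\pi(w)\in A_0$ is distinct from $a$.

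\emph{Stage 2 (injectivity).} Suppose $\pi(w)=\pi(w')$ with $w,w'$ in normal form, and induct on $|w|+|w'|$, where each $G$-syllable contributes $1$ and each $X^*$-syllable contributes its length in $X$. If one of them is empty, Stage~1 forces the other to be empty. Otherwise compare last-syllable types. When both end in $X^*$-syllables with final letters $x,x'$, disjointness of the $Ay$ forces $x=x'$, so right-cancellativity of $N$ strips $x$ and invokes the induction hypothesis. When both end in $G$-syllables $g,g'$, right-cancellation gives $\pi(w_0)=\pi(w'_0)(g'g\inv)$; if both words have length one this is just $g=g'$ by $G\hookrightarrow N$, and otherwise the cross-type case below (together with a direct application of Stage~1 to $w'_0\cdot(g'g\inv)$ or its mirror, to handle the degenerate subcase where exactly one of $w_0,w'_0$ is empty) forces $g'g\inv=1$, after which we cancel and induct. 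Finally, the genuine cross-type case (one side ends in $X^*$ with final letter $x$, the other in a $G$-syllable $g'$) is impossible: the $X^*$-side puts the common image into $Ax$, whereas the $G$-side puts it either into $A_0$ (when that word has length $\geq 2$) or into $Ag'=A$, contradicting $Ax\subsetneq A$ (when the word is just $g'$).

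Combining both stages yields injectivity of $\pi$; with surjectivity this gives $N\cong G\ast X^*$, and restricting to $X^*\subseteq G\ast X^*$ shows $\langle X\rangle$ is free with basis $X$. The main technical obstacle is the asymmetry of the hypothesis: we know $Axg\subseteq A_0$ but nothing about $A_0\cdot g$, so when equal elements of $N$ arise from normal forms of different last-syllable type, we cannot just peel off equal trailing letters; the ping-pong of Stage~1 must be reapplied to the residual elements to rule out such mismatches, which is why the measure for the induction must be the total letter-count (rather than the syllable count), so that any single-letter cancellation strictly decreases it.
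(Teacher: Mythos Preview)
Your proof is correct and follows essentially the same ping-pong strategy as the paper: use the action on $A$ to read off the type of the last syllable of a normal form, then induct using cancellativity to peel off trailing syllables. The only difference is bookkeeping: the paper works in $G\ast\Asterisk_{x\in X}x^*$ (treating each $x$ as its own syllable type) and inducts on syllable count, which lets the $G$-ending case go through by a single direct application of the inductive hypothesis, whereas you work in $G\ast X^*$ and compensate with the finer letter-count measure and a separate invocation of the cross-type impossibility; both routes are valid.
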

\begin{proof}
First note that if $X=\emptyset$, then $N=G$ and there is nothing to prove.  So we may assume that $X\neq \emptyset$.
For each $x \in X$ if $x^i=x^j$ for some $i,j \in \mathbb{N}$ with $i > j$ then since $N$ is a cancellative monoid this would imply $x^{i-j}=1$ and thus $x$ is invertible. But if $x$ were invertible it would imply $Ax=A$ contradicting the hypotheses of the lemma. 
Hence, 
each $x\in X$ generates a free submonoid $x^*$. 
We want to show that $N \cong G\ast \Asterisk_{\substack{x\in X}}x^*$.  
By assumption, the natural homomorphism $\phi: G\ast \Asterisk_{\substack{x\in X}}x^* \rightarrow N$ from the free product to $N$ is surjective.  
The action of $N$ on $A$ can be used to define an action of $G\ast \Asterisk_{\substack{x\in X}}x^*$ on $A$ via the homomorphism $\phi$. 
More precisely, we define an action of $G\ast \Asterisk_{\substack{x\in X}}x^*$ on $A$ where for each $s \in G\ast \Asterisk_{\substack{x\in X}}x^*$ and $a \in A$ we define $a s = a \phi(s)$.  
Next note that if 
an element $\alpha$ of the free product $G\ast \Asterisk_{\substack{x\in X}}x^*$
is in normal form with last syllable of type $x^*$, then $A\alpha\subseteq Ax$. 
If $\alpha$ has last syllable of type $G$, there are two cases.  If $\alpha\in G\setminus \{1\}$, then $A\alpha=A$ and $Ax\alpha\subseteq A_0$ for all $x\in X$; if $\alpha=\alpha'x^kg$ with $x\in X$, $k>0$ and $g\in G\setminus \{1\}$, then $A\alpha\subseteq A_0$.  It follows that if $\alpha$ is a nonempty normal form, then it acts nontrivially on $A$, and we can determine the type of its last syllable from its action on $A$.   

 Suppose that $\alpha$ and $\beta$ are normal forms for the free product.  We induct on the sum of their syllable lengths to show that if $\alpha$ and $\beta$ represent the same element of $N$, then they are equal in the free product. By the previous paragraph, we may assume that $\alpha$ and $\beta$ are nonempty and  their last syllables have the same type.
Suppose first that  $\alpha = \alpha'x^k$ and $\beta=\beta'x^m$ with $x\in X$ and $\alpha',\beta'$ of one shorter syllable length.  Without loss of generality assume that $k\leq m$.  Then cancelling $x^k$ we get $\alpha'$ and $\beta'x^{m-k}$ represent the same element of $N$.  By induction, since $\alpha'$ does not end in an $x^*$-syllable, we must have that $m=k$ and  $\alpha'=\beta'$ in the free product.   Next suppose that $\alpha=\alpha' g$ and $\beta =\beta'g'$ where $g,g'\in G\setminus \{1\}$ and $\alpha',\beta'$ have syllable length one shorter.  Then $\alpha'=\beta' g'g\inv$ in $N$.  Since $\alpha'$ does not end in a $G$-syllable, we deduce by induction that $g'g\inv =1$ and $\alpha'=\beta'$ in the free product.  This completes the proof.
\end{proof}

We now apply the Ping Pong Lemma to our setting.

\begin{theorem}\label{thm:FreeProductStructure}
The monoid $N$ is isomorphic to a free product $G \ast C^*$ where $G$ is the group of units of $M$ and $C$ is a finite set. More precisely,
let
$X \cup Y$ be the finite generating set for $N$ obtained in Lemma~\ref{lem_NGenAdvanced_newExtra}. Then $X$ is a basis for a free submonoid of $N$ and $N\cong G\ast X^*$ where the copy of $G$ is identified with $\langle Y\rangle$.
  \end{theorem}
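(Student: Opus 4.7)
The plan is to verify the hypotheses of the Ping Pong Lemma (Lemma~\ref{l:ping.pong}) for $N$, with distinguished subgroup $\langle Y\rangle$ and distinguished set $X$. Several standing hypotheses are immediate. The monoid $N$ is cancellative because it embeds in $R_1\times L_1^{\mathrm{op}}$, and by Zhang's theorems $R_1\cong G\ast C^*$ and $L_1\cong G\ast D^*$ are both group-embeddable. The map $g\mapsto (g,g\inv)$ is an injective monoid homomorphism $G\to N$---a direct computation $(g_1,g_1\inv)(g_2,g_2\inv)=(g_1g_2,(g_1g_2)\inv)$ shows it is a homomorphism, and projection to the first coordinate is its left inverse---and its image is $\langle Y\rangle$ since $\Delta$ generates $G$. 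Lastly, $N=\langle X\cup Y\rangle$ by Lemma~\ref{lem_NGenAdvanced_newExtra}.

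For the Ping Pong set, I would take $A=R_1\times L_1$ with the right $N$-action $(r,\ell)\cdot (u,v)=(ru,v\ell)$; a straightforward check using $(u_1,v_1)(u_2,v_2)=(u_1u_2,v_2v_1)$ confirms this is indeed a right action. Applying Zhang's decompositions to each factor yields canonical splittings
\[R_1=B_R\sqcup\bigsqcup_{c\in C}R_1\cdot c\quad\text{and}\quad L_1=B_L\sqcup\bigsqcup_{d\in D}d\cdot L_1,\]
where $B_R$ (respectively $B_L$) is the set of those elements whose free-product normal form is either trivial or ends (respectively starts) in a nontrivial $G$-syllable. For each $x=(\delta_1,\delta_2\delta\inv)\in X$ I would set $Ax=(R_1\delta_1)\times((\delta_2\delta\inv)L_1)$, and take $A_0$ to be the complement in $A$. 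The Ping Pong axioms $Axg\subseteq A_0$ for $g\in G\setminus\{1\}$ and $Ax\subsetneq A$ then follow directly from the free product normal form theorem: right multiplication by such a $g$ appends a nontrivial $G$-syllable on each side, so the resulting element lies in $A_0$; and $(1,1)\notin Ax$.

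The main obstacle is disjointness of the $Ax$ for distinct $x\in X$, which amounts to showing that the pair of projections embeds $X$ into $C\times D$ inside Zhang's free product structures. I would unpack Zhang's proof, in which the free generators of $R_1$ arise precisely from splittings $\delta\equiv \delta_1\delta_2\in\Delta$ with $\delta_1$ having no invertible suffix and $(\delta_1,\delta_2)$ of height zero---exactly the data captured by Lemma~\ref{lem_NGenAdvanced_newExtra}---with a dual construction for $L_1$. Distinct elements of $X$ give distinct pairs in $R_1\times L_1$ (since $N\hookrightarrow R_1\times L_1^{\mathrm{op}}$ separates points), so two distinct $x,x'\in X$ must differ in at least one coordinate, which places them in disjoint pieces of the form $R_1\cdot c$ or $d\cdot L_1$ by the free product normal form theorem; hence $Ax\cap Ax'=\emptyset$. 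With all axioms verified, the Ping Pong Lemma delivers $N\cong G\ast X^*$, with the copy of $G$ identified as $\langle Y\rangle$, completing the proof.
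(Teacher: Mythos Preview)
Your plan to invoke the Ping Pong Lemma is sound, but the choice of Ping Pong set $A=R_1\times L_1$ introduces a gap you have not closed. The paper instead applies the lemma with $A=N$ acting on itself on the right, and establishes disjointness of the sets $Nx$ directly: given $\pi x$ with $\pi=(\pi_\ell,\pi_r)\in N$ and $x=(\mu,\gamma\tau^{-1})\in X$, it shows (via a reduced-word analysis, their Claim~1) that $\pi_\ell\mu$ and $\gamma\overline{\tau^{-1}\pi_r}$ are reduced and that $\mu\gamma$ is the \emph{minimal} invertible word cutting across the pair $(\pi_\ell\mu,\gamma\overline{\tau^{-1}\pi_r})$; hence $x$ is recoverable from $\pi x$. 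The verification that $Nx(g,g^{-1})\subseteq N_0$ is handled by the same reduced-word machinery. This is a self-contained combinatorial argument that never appeals to Zhang's explicit free generators of $R_1$ or $L_1$.

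The gap in your route is the disjointness step. Even granting your unproved assertion that each first coordinate $\delta_1$ lies in Zhang's free basis $C$ for $R_1$, two distinct elements $x,x'\in X$ can share the same $\delta_1$: this occurs whenever two distinct elements $\delta,\delta'\in\Delta$ share a common prefix $\delta_1$ satisfying conditions (1)--(4) of Lemma~\ref{lem_NGenAdvanced_newExtra}. In that situation you must separate $x$ from $x'$ on the second coordinate, i.e.\ show $(\delta_2\delta^{-1})L_1\cap(\delta_2'(\delta')^{-1})L_1=\emptyset$. But $\delta_2\delta^{-1}$ is \emph{not} of the shape produced by the dual Zhang construction for $L_1$: that construction yields suffixes $\epsilon_2$ of $\Delta$-elements with no invertible prefix, not products of the form (suffix)$\cdot$(inverse of a $\Delta$-element). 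Thus the free-product normal-form theorem does not apply to the second coordinate as you claim, and the sentence ``differ in at least one coordinate, which places them in disjoint pieces of the form $R_1\cdot c$ or $d\cdot L_1$'' is not justified. Proving that these left cosets in $L_1$ are disjoint requires a separate argument---and unwinding what is needed leads you essentially to the minimal-cut analysis that the paper performs directly on $N$.
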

\begin{proof}
 Note that if $\mu,\gamma \in A^*$ are reduced with $\mu$ nonempty, 
 and with $\mu\gamma$ invertible and with $\mu$ having no invertible suffix, then $\mu$ is right invertible, but not invertible.

We prove the free product decomposition using the Ping Pong Lemma applied to the right action of $N$ on itself.  We know that $N$ is generated by $X\cup Y$ by Lemma~\ref{lem_NGenAdvanced_newExtra}.  We have already pointed out that $N$ is cancellative. Note that if $x=(\delta_1,\delta_2\delta\inv)\in X$ with $\delta_1\delta_2\equiv \delta\in \Delta$ and $\delta_1$ having no invertible suffix, then $\delta_1$ is right invertible (as $\delta$ is invertible), and hence is not left invertible.  It follows that $x$ is not left invertible, and so  $Nx\subsetneq N$ for all $x\in X$.  We now prove that the collection $\{Nx: x\in X\}$ is pairwise disjoint.

Let $\pi\in N$ with $\pi=(\pi_{\ell},\pi_r)$ where $\pi_{\ell}$ and $\pi_r$ are reduced words.  If $(\mu,\gamma\tau\inv)\in X$ with $\mu\gamma\equiv \tau\in \Delta$, we shall show that we can uniquely recover $(\mu,\gamma\tau\inv)$ from $(\pi_\ell\mu, \gamma\tau\inv \pi_r)=\pi(\mu,\gamma\tau\inv)$.
We need a claim in order to do this.  We continue to write $\ov{w}$ for the reduced form of $w$.

\begin{Claim}\label{claim:1}
Let $\pi=(\pi_\ell,\pi_r)\in N$ with $\pi_{\ell},\pi_r$ reduced, $(\mu,\gamma\tau\inv)\in X$ with 
$\mu\gamma\equiv \tau\in \Delta$
and $\delta\in \Delta^*$ reduced.  Then
\[\pi(\mu,\gamma\tau\inv)(\delta,\delta\inv) = (\pi_{\ell}\mu\delta,\delta\inv\gamma\ov{\tau\inv \pi_r})\] and $\pi_{\ell}\mu\delta$ and $\gamma\ov{\tau\inv \pi_r}$ are both reduced.  
\end{Claim}  
\begin{proof}[Proof of Claim~\ref{claim:1}]
By assumption $\pi_{\ell}$, $\pi_r$ and $\delta$ are reduced.  The words $\mu,\gamma$ are reduced by the definition of $X$ in Lemma~\ref{lem_NGenAdvanced_newExtra}.  First we observe that $\pi_{\ell}\mu$ is reduced.  This follows since $\pi_{\ell}$ and $\mu$ are reduced, and no rewrite rule can be applied with the left-hand side of the rewrite rule including both letters in $\pi_{\ell}$ and in  $\mu$ since the left-hand side of every rewrite rule belongs to $\Delta^+$, and hence is invertible,  and that would then give a nonempty left invertible prefix of $\mu$, something we already observed in Lemma~\ref{lem:ht.zero} it does not have.  Dually, $\gamma\ov{\tau\inv\pi_r}$ is reduced because no rewrite rule can be applied to $\gamma$ or $\overline{\tau^{-1}\pi_r}$ since they are reduced, and
furthermore, any rewrite rule applied across both of these words (i.e., involving letters from both of them) would contradict the fact (proved in Lemma~\ref{lem:ht.zero}) that $\gamma$ has no nonempty right invertible suffix, as the left-hand side of any rewrite rule is nonempty and invertible.  

Next we prove that $\delta$ is a maximal invertible subword of $\pi_{\ell}\mu\delta$. 
Indeed, by definition of the generating set $X$ the word $\mu$ does not have an invertible suffix.
It follows that there is no suffix $\mu''$ of $\mu$ such that $\mu''\delta$ is invertible.
Also, a suffix of $\pi_{\ell}\mu\delta$ of the form $\sigma \mu\delta$ cannot be invertible since if it were, then $\sigma \mu$ would be invertible, implying  $\mu$ is left invertible. But since by definition $\mu$ is right invertible this would imply $\mu$ is invertible, and this would contradict the fact that $\mu$ has no invertible suffix. 
Hence $\delta$ is a maximal invertible subword of $\pi_{\ell}\mu\delta$. 

Finally we show that $\pi_{\ell}\mu\delta$ is reduced. Since $\pi_{\ell}\mu$ and $\delta$ are both reduced the left hand side of any rewrite rule applied to $\pi_{\ell}\mu\delta$ must cut across $(\pi_{\ell}\mu,\delta)$. But 
the left hand side of any rewrite rule is nonempty and invertible so there would be an invertible word cutting across $(\pi_{\ell}\mu,\delta)$. But then since $\delta$ is invertible, by Lemma~\ref{lem:unions:of:invertible:general} that would imply that $\delta$ is not a maximal invertible subword of $\pi_{\ell}\mu\delta$, contradicting what was proved in the previous paragraph. Hence $\pi_{\ell}\mu\delta$ is reduced.
This completes the proof of the claim.
  \end{proof}
From Claim~\ref{claim:1} it follows that $\pi_{\ell}\mu$ and $\gamma\ov{\tau\inv\pi_r}$ are reduced.
We now recover $(\mu,\gamma\tau\inv)$ uniquely from $(\pi_\ell\mu,\gamma\ov{\tau\inv \pi_r})$ in the following way.
Consider all the invertible words cutting across this pair.
There is at least one such word since $\mu\gamma$ is invertible and $\mu$ and $\gamma$ are both nonempty words by definition of $X$.
By Proposition~\ref{p:lattice.cuts} the set of invertible words cutting across this pair is a lattice
under inclusion. 
We claim that the word
$\mu \gamma$ is the unique minimal element of this lattice.  This word is minimal because, by definition of $X$, the pair $(\mu, \gamma)$ has height zero.  In a lattice minimal elements are unique.  It follows that we can recover $(\mu, \gamma\tau\inv)$ from $(\pi_\ell\mu,\gamma\ov{\tau\inv \pi_r})$, and hence the collection $\{Nx: x\in X\}$ is pairwise disjoint.

Let $N_0$ be the complement of $\bigsqcup_{x\in X}Nx$.  By the Ping Pong Lemma, it remains to show that $Nx(g,g\inv)\subseteq N_0$ for all $x\in X$ and $g\neq 1$ in $G$.  Indeed, let $x=(\mu,\gamma\tau\inv)\in X$, and let $\delta\in \Delta^*$ be reduced representing $g$, whence $\delta$ is nonempty.  Let $\pi=(\pi_\ell,\pi_r)\in N$ with $\pi_\ell,\pi_r$ reduced.  Then 
$\pi x(g,g\inv) = (\pi_\ell\mu\delta,\delta\inv \gamma\tau\inv \pi_r)$ and $\pi_\ell\mu\delta$ is reduced by Claim~\ref{claim:1}.  Now if $\pi x(g,g\inv)=\pi'x'$ with $\pi'=(\pi_\ell',\pi_r')\in N$, $x'=(\mu',\gamma'(\tau')\inv)\in X$, $\tau'\equiv \mu'\gamma'\in \Delta$ and $\pi_\ell',\pi_r'$ reduced, then we would have $\pi'x'=(\pi_\ell'\mu',\gamma'(\tau')\inv\pi_r')$ with $\pi_\ell'\mu'$ reduced by Claim~\ref{claim:1}.  Thus $\pi_\ell'\mu'\equiv\pi_\ell\mu\delta$.  We cannot have $\delta$ as a suffix of $\mu'$ since $\mu'$ has no invertible suffix and $\delta$ is nonempty.   If $\mu'$ is a suffix of $\delta$, then it is left invertible, but it is also right invertible since $\mu'\gamma'$ is invertible.  This contradicts that $\mu'$ has no invertible suffix. Thus $\pi x(g,g\inv)\in N_0$.  This completes the proof that $X$ freely generates $X^*$ and  $N\cong G\ast X^*$ by the Ping Pong Lemma. 
\end{proof}

\section{Proving $M \times M$ is a free $N$-set}\label{sec:MxMFreeNSet:special}

As in the previous section, throughout this section
\[
M = \lb A \mid w_1=1, \ldots, w_k=1 \rb.
\]

For the next part of the argument we want to prove that $M \times M$ is a free right $N$-set under the action
\[
(m_1,m_2) \cdot (n_1,n_2) = (m_1 n_1, n_2 m_2),
\]
where $(m_1,m_2) \in M \times M$ and $(n_1,n_2) \in N$. Note that in the above situation we have
\[
m_1m_2 = m_1 n_1 n_2 m_2
\]
in $M$ since $n_1n_2=1$ by definition of $N$. 

We shall need the following remark. If $S$ is a cancellative monoid and $X$ is a free right $S$-set, then $xs=xs'$ implies $s=s'$ for $x\in X$ and $s,s'\in S$, and dually for free left $S$-sets.  Indeed, this is true for the right action of $S$ on itself by cancellativity, and a free $S$-set is a disjoint union of copies of the action of $S$ on itself.

Our aim in this section is to prove the following result. Note the basis in the following theorem is not finite, but we do not expect nor need it to be so.

\begin{theorem}\label{thm:Free:N:Set2}
The set $M \times M$ is a free right $N$-set under the action
\[
(m_1,m_2) \cdot (n_1,n_2) = (m_1 n_1, n_2 m_2),
\]
where $(m_1,m_2) \in M \times M$ and $(n_1,n_2) \in N$. In more detail, let
$\mathcal{B}$ be the set of all pairs of words $(\alpha_1, \alpha_2)$ such that 
$\alpha_1\alpha_2 \equiv \alpha$ is a reduced word and
$\alpha$ has Otto-Zhang normal form \[\alpha\equiv w_0a_1w_1\cdots a_mw_m\]
with $\alpha_1\equiv w_0a_1\cdots wa_{i-1}$  and $\alpha_2\equiv w_{i-1}a_i\cdots a_mw_m$
for some $1\leq i\leq m+1$.
Then $M \times M$ is a free right $N$-set with basis
$\mathcal{B}$.
 \end{theorem}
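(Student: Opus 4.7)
The plan is to establish that the map $\Phi\colon\mathcal B\times N\to M\times M$ defined by $\Phi((\alpha_1,\alpha_2),(n_1,n_2))=(\alpha_1 n_1,n_2\alpha_2)$ is a bijection, which is equivalent to $M\times M$ being a free right $N$-set with basis $\mathcal B$. Surjectivity will show every $(m_1,m_2)$ lies in the orbit of a basis element, and injectivity will simultaneously pin down the split position and the $N$-element.

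For surjectivity, given $(m_1,m_2)\in M\times M$, take reduced representatives with Otto--Zhang normal forms $\overline{m_1}\equiv\beta_0 b_1\beta_1\cdots b_p\beta_p$ and $\overline{m_2}\equiv\gamma_0 c_1\gamma_1\cdots c_q\gamma_q$. Since $\beta_p\gamma_0$ is invertible by Lemma~\ref{lem:unions:of:invertible:general}, Lemma~\ref{lemma:OZ} yields the Otto--Zhang normal form
\[
\alpha\equiv\overline{m_1 m_2}\equiv\beta_0 b_1\cdots b_p\,\overline{\beta_p\gamma_0}\,c_1\gamma_1\cdots c_q\gamma_q.
\]
Take the basis split at position $i=p+1$, namely $\alpha_1\equiv\beta_0 b_1\cdots b_p$ and $\alpha_2\equiv\overline{\beta_p\gamma_0}\,c_1\gamma_1\cdots c_q\gamma_q$, and set $n_1=\beta_p$ and $n_2=\beta_p^{-1}$ (the reduced inverse of $\beta_p$ in $\Delta^*$). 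Then $(n_1,n_2)\in N$ since $\beta_p$ is invertible, and $\alpha_1 n_1=\overline{m_1}$ and $n_2\alpha_2=\overline{m_2}$ in $M$ by direct computation, so $\Phi((\alpha_1,\alpha_2),(n_1,n_2))=(m_1,m_2)$.

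For injectivity, suppose $\Phi((\alpha_1,\alpha_2),(n_1,n_2))=\Phi((\alpha_1',\alpha_2'),(n_1',n_2'))=(m_1,m_2)$. Using $n_1 n_2=n_1' n_2'=1$, multiplying the two coordinates in $M$ yields $\alpha_1\alpha_2=\alpha_1'\alpha_2'$ in $M$; both are reduced by the definition of $\mathcal B$, so they coincide as the unique reduced word $\alpha\equiv w_0 a_1\cdots a_m w_m$. I will show both splits occur at position $p+1$. The critical observation is that $\alpha_1\equiv w_0 a_1\cdots a_{i-1}$ has no invertible suffix: its trailing Otto--Zhang invertible piece is empty, since $a_{i-1}$ is an OZ-letter of $\alpha$ and is therefore not invertible on its own. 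Writing $\overline{n_1}$ in OZ normal form as $\nu_0 e_1\nu_1\cdots e_r\nu_r$, the concatenation $\alpha_1\overline{n_1}$ is already in OZ normal form---no merge occurs at the boundary because $\alpha_1$'s last invertible piece is empty and $\nu_0$ cannot be extended left past the non-invertible letter $a_{i-1}$---and is reduced, since each syllable is reduced and no rewrite rule of the Zhang system (whose left-hand sides are invertible) can cut across the non-invertible $a_{i-1}$. Matching with the unique OZ normal form of $\overline{m_1}=\beta_0 b_1\cdots b_p\beta_p$ forces $r=p-i+1\geq 0$, hence $i\leq p+1$. A symmetric analysis on $m_2=n_2\alpha_2$, where $\alpha_2$ begins with the invertible piece $w_{i-1}$ that merges with the last OZ invertible piece of $\overline{n_2}$, yields $i\geq p+1$. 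Combining gives $i=i'=p+1$ and so $(\alpha_1,\alpha_2)=(\alpha_1',\alpha_2')$.

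With $i=p+1$, the same OZ-matching forces $\overline{n_1}\equiv\beta_p$ and $\overline{n_2}$ to be the reduced inverse of $\beta_p$, so $n_1=n_1'$ and $n_2=n_2'$ as elements of $M$; since $N\subseteq M\times M^{\mathrm{op}}$ set-theoretically, $(n_1,n_2)=(n_1',n_2')$ in $N$. The main technical obstacle is justifying that $\alpha_1\overline{n_1}$ is in reduced OZ normal form with only the described trivial boundary behaviour (and dually for $\overline{n_2}\alpha_2$): this rests on the no-invertible-suffix property built into the definition of $\mathcal B$, together with the observation that OZ-letters of $\alpha$ are not invertible in $M$ so cannot be absorbed into the adjacent invertible piece from $\overline{n_1}$. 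This allows the Otto--Zhang normal form of $\overline{m_1}$ to be read off transparently, making both the split position and the $N$-element uniquely recoverable from $(m_1,m_2)$.
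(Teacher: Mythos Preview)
Your proof contains a genuine error that affects both the surjectivity and injectivity arguments, stemming from an incorrect claim about how Otto--Zhang normal forms behave under concatenation.

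In the surjectivity step you assert that the Otto--Zhang normal form of $\overline{m_1 m_2}$ is $\beta_0 b_1\cdots b_p\,\overline{\beta_p\gamma_0}\,c_1\gamma_1\cdots c_q\gamma_q$, i.e., that only the adjacent pieces $\beta_p$ and $\gamma_0$ merge at the boundary. This is false. In the bicyclic monoid $M=\langle a,b\mid ab=1\rangle$ take $m_1=a$, $m_2=b$: then $\beta_p=\gamma_0=\epsilon$, yet the word $ab$ is itself invertible, so its Otto--Zhang normal form is the single invertible piece $ab$, not $\epsilon\,a\,\epsilon\,b\,\epsilon$. Your construction outputs $(\alpha_1,\alpha_2)=(a,b)$, which is not in $\mathcal B$ since $ab$ is not reduced. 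The subtlety you miss is that while extending $\beta_p\gamma_0$ by $b_p$ alone (or by $c_1$ alone) cannot yield an invertible word, extending simultaneously on both sides can.

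The same misconception undermines injectivity. Your ``symmetric analysis'' does not give $i\geq p+1$; the dual count actually gives $i\geq m-q+1$, and in general $m<p+q$ precisely when $n_1$ is not invertible. In the bicyclic example $(m_1,m_2)=(a,b)$ one has $p=q=1$, $m=0$, $i=1$, and $(n_1,n_2)=(a,b)$ with $n_1$ non-invertible---so your conclusion $i=p+1=2$ is out of range. Counting Otto--Zhang letters alone cannot pin down $i$.

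The paper's proof avoids this by working directly with the Otto--Zhang normal form of the concatenation $uv$ (with $u,v$ the reduced representatives), locating where the split $u\mid v$ falls inside some maximal invertible piece $w_{i-1}$. For uniqueness it does not attempt to read $i$ off $p$ and $q$; instead it recovers $\alpha_1$ from $(\alpha_1 x,\overline{y\alpha_2})$ by identifying the longest invertible subword of $\alpha_1 x\,\overline{y\alpha_2}$ crossing the boundary (or the longest invertible suffix of $\alpha_1 x$), and this argument leans essentially on the right-invertibility of $x=n_1$ and left-invertibility of $y=n_2$---something you invoke only implicitly when asserting that no rewrite rule can cross $a_{i-1}$.
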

\begin{proof}
Let $u,v\in A^*$ be reduced words and let $uv\equiv w_0a_1\cdots a_mw_m$ in Otto-Zhang normal form.  Suppose, moreover, that $u\equiv w_0a_0\cdots w_{i-2}a_{i-1}w_{i-1}'$ and $v\equiv w_{i-1}''a_i\cdots a_mw_m$ where $w_{i-1}\equiv w_{i-1}'w_{i-1}''$.   Note that for $j\neq i-1$, we have that $w_j$ is reduced and hence belongs to $\Delta^*$ by Lemma~\ref{lem:Zhang:Irred}.  Also, $\ov{uv} = w_0a_1\cdots a_{i-1}\ov{w_{i-1}}a_i\cdots a_m w_m$ by Lemma~\ref{lemma:OZ}.   It follows that $(u',v')\in \mathcal B$ where $u'\equiv w_0a_1\cdots w_{i-2}a_{i-1}$ and $v'\equiv \ov{w_{i-1}}a_i\cdots a_mw_m$.  We claim that $(u,v)\in (u',v')N$.  Indeed, note that $(w_{i-1}',w_{i-1}''w_{i-1}\inv)\in N$ 
since 
$w_{i-1}' w_{i-1}''w_{i-1}\inv \equiv  w_{i-1}w_{i-1}\inv = 1$,
and
\[(u',v')(w_{i-1}',w_{i-1}''w_{i-1}\inv) = (u'w_{i-1}', w_{i-1}''w_{i-1}\inv w_{i-1}a_i\cdots a_mw_m) = (u,v),\]
as required.  We conclude that $\mathcal B$ is a generating set for $M\times M$ as a right $N$-set.  It remains, to show each element of $M\times M$ has only one expression in the form $(\alpha_1,\alpha_2)(n_1,n_2)$ with $(\alpha_1,\alpha_2)\in \mathcal B$ and $(n_1,n_2)\in N$.

Suppose that $(\alpha_1,\alpha_2)\in \mathcal B$ and let $x,y$ be reduced words with $xy=1$.  We show that $(\alpha_1,\alpha_2)$ can be recovered from the right-hand side of $(\alpha_1,\alpha_2)(x,y) = (\alpha_1x,y\alpha_2)$.  Note that by definition of $\mathcal B$, $\alpha_1$ has no invertible suffix.   

First we claim that $\alpha_1x$ is reduced.  Indeed, since $\alpha_1$ and $x$ are reduced, any rewrite rule that is applicable must cut across $(\alpha_1,x)$.  Since the left-hand side of a rewrite rule is nonempty and invertible, this means we can write $\alpha_1\equiv \alpha\beta$ and $x\equiv \gamma\delta$ with $\beta\gamma$ invertible and $\beta$, $\gamma$ nonempty.  Since $x$ is right invertible, we deduce that $\gamma$ is invertible and hence $\beta$ is invertible, contradicting that $\alpha_1$ has no invertible suffix.
Hence $\alpha_1x$ is reduced.

Consider $(\alpha_1x,\ov{y\alpha_2})$. 
Let $\mu$ be the longest invertible word that cuts across $(\alpha_1x,\ov{y\alpha_2})$ if such a word exists, while if no such word exists then let $\mu$ be the longest invertible suffix of $\alpha_1 x$, which could possibly be empty. We claim that $\alpha_1x\ov{y\alpha_2}\equiv \alpha_1\mu\nu$ with $\ov{\mu\nu}=\alpha_2$.  It will then follow that we can recover $(\alpha_1,\alpha_2)$ from $(\alpha_1,\alpha_2)(x,y)$.    

\begin{sloppypar}
Indeed, to prove the claim let $\alpha_1\alpha_2\equiv w_0a_0\cdots a_mw_m$ in Otto-Zhang normal form. 
Each $w_j$ is reduced because $\alpha_1\alpha_2$ is reduced, by the definition of $\mathcal B$, and then $w_j \in \Delta^*$ by Lemma~\ref{lemma:OZ}.
Then by definition of $\mathcal B$, we have some $i$ with $\alpha_1\equiv w_0a_0\cdots w_{i-2}a_{i-1}$ and $\alpha_2\equiv w_{i-1}a_i\cdots a_mw_m$.   Then $\alpha_1xy\alpha_2\equiv w_0a_0\cdots w_{i-2}a_{i-1}xyw_{i-1}a_i\cdots a_mw_m$.  By Lemma~\ref{lem:unions:of:invertible:general}, it follows that any maximal invertible subword of $\alpha_1xy\alpha_2$ is either a maximal invertible subword of $\alpha_1$, $\alpha_2$ or contains $xy$.  But since $xy=1$ in $M$ and $w_{i-1}$ is maximal invertible in $\alpha_1\alpha_2$, it follows that $xyw_{i-1}$ is the maximal invertible subword of $\alpha_1xy\alpha_2\equiv w_0a_0\cdots w_{i-2}a_{i-1}xyw_{i-1}a_i\cdots a_mw_m$ containing $xy$. 
It follows that the Otto Zhang normal form of the word $\alpha_1xy\alpha_2$ is
\[
\alpha_1xy\alpha_2\equiv w_0a_0\cdots w_{i-2}a_{i-1}(xyw_{i-1})a_i\cdots a_mw_m
\]
where the maximal invertible subwords in this normal form are the words
\[
w_0, \ldots, w_{i-2}, xyw_{i-1}, w_i, \ldots, w_m. 
\]
Since $xyw_{i-1}$ is invertible it follows that $y w_{i-1}$ is left invertible. Since $a_i w_i \cdots a_mw_m$ has no invertible prefix it follows that no invertible word cuts across the pair
\[
(y w_{i-1}, a_i w_i \cdots a_mw_m).
\]
Indeed, if $\sigma \equiv \sigma_1 \sigma_2$ were an invertible word that cuts across this pair with $\sigma_2$ a prefix of   
$a_i w_i \cdots a_mw_m$, then $\sigma_1$ is left invertible since it is a suffix of the left invertible word $y w_{i-1}$, and $\sigma_1$ is right invertible since it is a prefix of the invertible word $\sigma$. Then $\sigma_1$ and $\sigma$ are invertible and thus $\sigma_2$ is an invertible prefix of $a_i w_i \cdots a_mw_m$ which is a contradiction. It follows that every invertible subword of the word       
\[
y w_{i-1} a_i w_i \cdots a_mw_m
\]
is contained in either a subword of $y w_{i-1}$ or else is a subword of $a_i w_i \cdots a_mw_m$.
It follows that the Otto-Zhang normal form of $y\alpha_2$ is graphically equal to the product of the Otto-Zhang normal form of $yw_{i-1}$ and $\nu:=a_iw_i\cdots a_mw_m$.  We conclude from Lemma~\ref{lemma:OZ} that $\ov{y\alpha_2} \equiv \ov{yw_{i-1}}a_iw_i\cdots a_mw_m$.  Then
\[\alpha_1 x \ov{y\alpha_2} \equiv w_0a_0\cdots w_{i-2}a_{i-1}x\ov{yw_{i-1}}a_iw_i\cdots a_mw_m.\]
Now there are two cases.
If there is an invertible word that cuts across $(\alpha_1x,\ov{y\alpha_2})$ then it follows that $\mu = x\ov{yw_{i-1}}$ since $xy=1$, $x\ov{yw_{i-1}}$ is invertible, and $w_{i-1}$ is a maximal invertible subword of $\alpha_1\alpha_2$.  Therefore, $\alpha_1\ov{y\alpha_2}\equiv \alpha_1\mu\nu$ and $\ov{\mu\nu}=\alpha_2$.
In the other case there is no invertible word that cuts across $(\alpha_1x,\ov{y\alpha_2})$.
In this case since $x \ov{yw_{i-1}}$ is invertible, either $x$ is empty or $\ov{yw_{i-1}}$ is empty. If $x$ is empty then $y$ is also empty since $xy=1$ and $x$ and $y$ are reduced. Therefore, $(\alpha_1 x, \ov{y \alpha_2}) = (\alpha_1, \alpha_2)$, and by definition of $\mathcal{B}$ the word $\alpha_1$ has no non-trivial invertible suffix, so $\mu$ is empty. So we recover $\alpha_2 = \nu = \ov{\mu \nu}$.  
Otherwise, $\ov{yw_{i-1}}$ is the empty word so
\[\alpha_1 x \ov{y\alpha_2} \equiv w_0a_0\cdots w_{i-2}a_{i-1}xa_iw_i\cdots a_mw_m.\] 
In this case in $M$ we have $x = xy w_{i-1} = w_{i-1}$. In particular $x$ is invertible. Furthermore
\[\alpha_1 x \equiv w_0a_0\cdots w_{i-2}a_{i-1}x \]
Suppose that $x$ is not the longest invertible suffix of the word $\alpha_1 x$; say, e.g., $z$ is a longer such suffix.  Then since $xy w_{i-1}$ and $z$ are both invertible with $x$ a suffix of $z$, it would follow 
from Lemma~\ref{lem:unions:of:invertible:general} 
that $zyw_{i-1}$ is an invertible word strictly containing $xy w_{i-1}$ contradicting the fact proved above that $xyw_{i-1}$ is a maximal invertible subword of $\alpha_1xy\alpha_2$. Hence in this case $x$ is the longest invertible suffix of the word $\alpha_1 x$. So by definition $\mu \equiv x$ and
\[\alpha_1 x \ov{y\alpha_2} \equiv \alpha_1 xa_iw_i\cdots a_mw_m \equiv \alpha_1 \mu \nu \] 
and
\[ \ov{\mu \nu} = \ov{xa_iw_i\cdots a_mw_m } = \ov{w_{i-1}a_iw_i\cdots a_mw_m } = \alpha_2 \]
since in this case, as we saw above, $x = w_{i-1}$ in $M$. This completes the proof that we can recover $(\alpha_1, \alpha_2)$ from $(\alpha_1, \alpha_2)(x,y)$.
\end{sloppypar}

Next suppose that $(\alpha_1,\alpha_2)\in \mathcal B$ and $(\alpha_1,\alpha_2)(x,y)=(\alpha_1,\alpha_2)(x',y')$ with $(x,y)$ and $(x',y')\in N$.  We show that $(x,y)=(x',y')$.  Indeed, by Corollary~\cite[Corollary~3.8]{GraySteinbergDocumenta}, $M$ is a free right $R_1$-set and a free left $L_1$-set. Note that $R_1,L_1$ are cancellative monoids. Hence the equations $\alpha_1x=\alpha_1x'$ and $y\alpha_2=y'\alpha_2$ imply that $x=x'$ and $y=y'$, as required. \end{proof}

\begin{remark}\label{remark:useful.basis}
Let $u,v \in A^*$ be reduced. Write
\begin{eqnarray*}
uv &\equiv& w_0a_1w_1 \ldots a_m w_m
  \end{eqnarray*}
as in the statement of Lemma~\ref{lem:OttoZhang}, where $a_i \in A$ for all $1 \leq i \leq m$ and $w_i$ is 
a maximal invertible subword of $w$ 
for all $0 \leq i \leq m$
(where some of the $w_i$ could be the empty word).  Suppose that
\begin{align*}
u & \equiv  w_0a_1\cdots w_{i-2}a_{i-1}w_{i-1}' \\ v & \equiv  w_{i-1}''a_iw_i\cdots a_mw_m
\end{align*}
with $w_{i-1}\equiv w_{i-1}'w_{i-1}''$.  The proof of Theorem~\ref{thm:Free:N:Set2} shows that $(u,v)$ is in the weak $N$-orbit of the basis element $(w_0a_1\cdots w_{i-2}a_{i-1},\ov{w_{i-1}}a_i\cdots a_mw_m)$. 

Since the set $M \times M$ is a free right $N$-set Theorem~\ref{thm:Free:N:Set2}, the observation in the previous paragraph provides a characterisation of exactly when two pairs $(u_1,v_1)$ and $(u_2,v_2)$ belong to the same weak $N$-orbit of the free right $N$-set $M \times M$. Indeed, $(u_1,v_1)$ and $(u_2,v_2)$ belong to the same weak $N$-orbit if and only if the basis elements from $\mathcal{B}$ obtained from $(u_1,v_1)$ and from $(u_2,v_2)$ via the process described in the previous paragraph are the same. This description of the weak $N$-orbits of $M \times M$ will be used in several arguments below. 
\end{remark}

Recall from the preliminaries section above that 
if a monoid $N$ acts on the right of a set $A$, then the weak orbits of $N$ on $A$ are the equivalence classes under the smallest equivalence relation on $A$ such that $a,an$ are equivalent for all $a\in A$ and $n\in N$.  We recall for the benefit of the reader that $\mathbb ZA\otimes_{\mathbb ZN} \mathbb Z$ is a free abelian group with basis the set of weak orbits of $N$ on $A$ via the map sending $a\otimes 1$ to the weak orbit of $a$. 

An important consequence of Theorem~\ref{thm:Free:N:Set2} and Remark~\ref{remark:useful.basis},
which will be proved in Lemma~\ref{lem:forst:quotient2} below, 
is that $\overleftrightarrow{\Gamma} /N$ is a forest, where $\overleftrightarrow{\Gamma}$ is the two-sided Cayley graph of $M$ and $\overleftrightarrow{\Gamma} /N$ is the graph obtained by collapsing the induced subgraph on each weak orbit of $N$ on the vertices of $\overleftrightarrow{\Gamma}$ to a point. So $\overleftrightarrow{\Gamma} /N$ has vertex set $(M\times M)/N$ and edge set consisting of those edges $e$ of $\overleftrightarrow{\Gamma}$ with initial vertex $\iota(e)$ in a different weak $N$-orbit than terminal vertex $\tau(e)$.
The corresponding edge of $\overleftrightarrow{\Gamma} /N$ goes from $[\iota(e)]$ to $[\tau(e)]$ where $[(m,m')]$ denotes the weak $N$-orbit of $(m,m')\in M\times M$.   
This gives a bijection between the edges of $\overleftrightarrow{\Gamma}/N$ and the set of edges of $\overleftrightarrow{\Gamma}$ whose initial and terminal vertices belong to distinct $N$-orbits. Thus for each such edge in $\overleftrightarrow{\Gamma}$ we can sensibly talk about the corresponding edge in $\overleftrightarrow{\Gamma}/N$. 
There is an $M\times M^{op}$-equivariant cellular map $\overleftrightarrow{\Gamma}\to \overleftrightarrow{\Gamma}/N$ sending a vertex to its class, and sending any edge connecting two vertices in the same weak $N$-orbit to that weak $N$-orbit (which is a vertex of the graph $\overleftrightarrow{\Gamma}/N$) and sending every other edge in $\overleftrightarrow{\Gamma}$ to the corresponding edge in $\overleftrightarrow{\Gamma}/N$.

Recall that $\pi_0(\overleftrightarrow{\Gamma})$ denotes the
 set of path-connected components of $\overleftrightarrow{\Gamma}$ which is isomorphic to $M$ as an $M \times M^{\mathrm{op}}$-set; see~\cite[Section~5.3]{GraySteinbergAGT}.
  Note that if $[(m_1,m_2)]=[(m_1',m_2')]$, then $m_1m_2=m_1'm_2'$.   It follows that the map $\overleftrightarrow{\Gamma}\to \overleftrightarrow{\Gamma}/N$ factors through the map $\overleftrightarrow{\Gamma}\to\pi_0(\overleftrightarrow{\Gamma})\cong M$, and so $\pi_0(\overleftrightarrow{\Gamma}/N)$ is identified with $M$.

\begin{lemma}\label{lem:forst:quotient2}
The quotient $\overleftrightarrow{\Gamma} /N$ of the two-sided Cayley graph $\overleftrightarrow{\Gamma}$ of $M$ by $N$ is a forest.
  \end{lemma}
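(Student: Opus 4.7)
The plan is to show that each connected component of $\overleftrightarrow{\Gamma}/N$ is a tree. Since the preceding discussion identifies $\pi_0(\overleftrightarrow{\Gamma}/N)$ with $M$, connected components are indexed by elements of $M$, so it suffices to fix $m \in M$ and show the component $C_m$ over $m$ is a tree. Write the Otto-Zhang normal form as $\overline{m} \equiv w_0 a_1 w_1 \cdots a_n w_n$; by Theorem~\ref{thm:Free:N:Set2} the vertices of $C_m$ are exactly the $n+1$ weak orbits $[b_0], [b_1], \ldots, [b_n]$ corresponding to the $n+1$ basis elements of $\mathcal{B}$ whose product is $m$, where $b_k$ corresponds to the split at the separator $a_k$ (with the obvious conventions at $k=0$ and $k=n$).

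For each $k \in \{1, \ldots, n\}$, I would define the canonical edge $e_k = (p_k, a_k, q_k)$ of $\overleftrightarrow{\Gamma}$, where $p_k, q_k \in M$ are represented by the reduced words $w_0 a_1 \cdots w_{k-1}$ and $w_k a_{k+1} \cdots w_n$, respectively. A direct application of Remark~\ref{remark:useful.basis} shows that the initial vertex $(p_k, a_k q_k)$ lies in $[b_{k-1}]$ and the terminal vertex $(p_k a_k, q_k)$ lies in $[b_k]$. Consequently each $e_k$ descends to an edge of $\overleftrightarrow{\Gamma}/N$, and together they form a path $[b_0]-[b_1]-\cdots-[b_n]$ covering all the vertices of $C_m$; in particular $C_m$ is connected.

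To conclude $C_m$ is a tree, I would prove that these $n$ canonical edges are the only edges of $C_m$. Take any surviving edge $(m_1, c, m_2)$ over $m$, and set $u \equiv \overline{m_1}$, $v \equiv \overline{m_2}$. If $ucv$ is reduced then $ucv \equiv \overline{m}$ and the letter $c$ either equals some separator $a_k$, in which case $u \equiv w_0 a_1 \cdots w_{k-1}$ and $v \equiv w_k a_{k+1} \cdots w_n$ so that the edge coincides with $e_k$, or else $c$ lies inside some maximal invertible subword $w_i$, in which case Remark~\ref{remark:useful.basis} places both endpoints in $[b_i]$, contradicting the survival hypothesis.

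The main obstacle is the case when $ucv$ is not reduced. Since $u$ and $v$ are themselves reduced, any applicable rule of the Zhang rewriting system must straddle the letter $c$, so some subword $u_s c v_p$ (with $u_s$ a suffix of $u$ and $v_p$ a prefix of $v$) is an invertible word in $\Delta^+$. Using Lemma~\ref{lem:unions:of:invertible:general} to control the surrounding invertibility and Lemma~\ref{lemma:OZ} to compare the Otto-Zhang normal form of $ucv$ with that of $\overline{m}$, I would argue that the letter $c$ is absorbed into a single maximal invertible subword, so that the split indices of $(m_1, c m_2)$ and $(m_1 c, m_2)$ coincide and the edge collapses, contradicting survival. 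Carrying out this bookkeeping carefully — identifying precisely which maximal invertible subword absorbs $c$ and tracking the Otto-Zhang decomposition of both $\overline{c m_2}$ and $\overline{m_1 c}$ — is where I expect the most delicate technical work to lie.
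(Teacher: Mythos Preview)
Your proposal is correct and follows the same strategy as the paper: show that the component over each $m\in M$ is the linear path on the $n+1$ basis vertices coming from the Otto--Zhang normal form $\overline m\equiv w_0a_1w_1\cdots a_nw_n$, with exactly the $n$ edges corresponding to the separators $a_k$.

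The one place where you diverge from the paper --- and make life harder for yourself --- is the case split on whether $ucv$ is reduced. This split is unnecessary, and the ``main obstacle'' you flag is illusory. Lemma~\ref{lem:OttoZhang} applies to \emph{every} word, not just reduced ones: since $ucv=m$ in $M$, the Otto--Zhang normal form of $ucv$ is $w_0'a_1w_1'\cdots a_nw_n'$ with the same separators $a_k$ and with each $w_i'=w_i$ in $M$. The distinguished letter $c$ is therefore either some $a_k$ or lies inside some $w_i'$; there is no third possibility and no reduction step to track. When $c$ lies inside $w_i'$, Remark~\ref{remark:useful.basis} immediately puts both endpoints in $[b_i]$, so the edge collapses. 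When $c=a_k$, the fact that $u$ and $v$ are reduced forces $w_j'\equiv w_j$ for every $j$ (each $w_j'$ is a reduced representative of the group element $w_j$), whence $(m_1,c,m_2)$ is literally your canonical edge $e_k$. This is exactly how the paper argues, and it dispenses with your non-reduced case in one stroke.
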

\begin{proof}
Let $s \in M$ and consider the component corresponding to $s$ in $\overleftrightarrow{\Gamma}/N$. Let $w$ be the reduced word that equals $s$. Write
\[
w \equiv w_0 a_1 w_1 \ldots a_m w_m
\]
in Otto-Zhang normal form where the $w_i$ are maximal invertible subwords (possibly empty, and all in $\Delta^*$) and all $a_i \in A$, as in the statement of Lemma~\ref{lem:OttoZhang}.  Let $w'\equiv a_1 a_2 \ldots a_m$.   We claim that the $m$-component of $\overleftrightarrow{\Gamma}/N$ is isomorphic 
to the 
labelled graph denoted 
$\mathcal A(w')$ 
with vertex set $0,\ldots, m$ and an edge labelled by $a_i$ from $i-1$ to $i$ for $1\leq i\leq m$.  It will then follow that  $\overleftrightarrow{\Gamma}/N$ is a forest.

Define a cellular map from the $m$-component of $\overleftrightarrow{\Gamma}$ to $A(w')$ as follows.  If $\alpha_1,\alpha_2\in A^*$ are reduced with $\alpha_1\alpha_2=m$, then $\alpha_1\equiv w_0a_1\cdots w_{i-2}a_{i-1}w_{i-1}'$ and $\alpha_2\equiv w_{i-1}''a_iw_i\cdots a_mw_m$ with $\ov{w_{i-1}'w_{i-1}''}=w_{i-1}$ by Lemmas~\ref{lem:OttoZhang} and~\ref{lemma:OZ}.  Both $w_{i-1}'$ and $w_{i-1}''$ can be empty.  Send $(\alpha_1,\alpha_2)$ to $i-1$.  Notice in particular that the basis element $(w_0a_1\cdots w_{i-2}a_{i-1}, w_{i-1}a_iw_i\cdots a_mw_m)$ is sent to $i-1$, and so this map is surjective on vertices.  By Remark~\ref{remark:useful.basis}, this map factors through $(M\times M)/N$, and the induced map is injective on vertices.    An edge in the $m$-component of  $\overleftrightarrow{\Gamma}$ has the form $(\mu,a,\gamma)$ where $\mu,\gamma\in A^*$ are reduced, $a\in A$ and $\mu a\gamma=m$.  By Lemmas~\ref{lem:OttoZhang} and~\ref{lemma:OZ}, we have two cases.  The first is that $\mu\equiv w_0a_0\cdots w_{i-2}a_{i-1}w_{i-1}'$, $\gamma\equiv w_{i-1}''a_i\cdots a_mw_m$ where $\ov{w_{i-1}'aw_{i-1}''}=w_{i-1}$.  In this case, by Remark~\ref{remark:useful.basis}, this edge is between two vertices in the same weak $N$-orbit, namely that of the basis element $(w_0a_0\cdots w_{i-2}a_{i-1},w_{i-1}a_i\cdots a_mw_m)$.  The second case is that $\mu\equiv w_0a_0\cdots w_{i-1}$, $\gamma\equiv w_i\cdots a_mw_0$ and $a=a_i$.  In this case, initial vertex is in the weak $N$-orbit of the basis element $(w_0a_0\cdots a_{i-1}, w_{i-1}a_i\cdots a_mw_m)$, whereas the terminal vertex is in the weak $N$-orbit of the basis element $(w_0\cdots w_{i-1}a_i, w_i\cdots a_mw_m)$.  We map the corresponding edge of $\overleftrightarrow{\Gamma}/N$ to the edge labeled $a_i$ from $i-1$ to $i$.  This completes the proof of the isomorphism.
\end{proof}

\section{Proving the set of collapsed edges is a finitely generated free $M \times M^{\mathrm{op}}$-set}\label{sec:edge:module}

As in the previous section, throughout this section
\[
M = \lb A \mid w_1=1, \ldots, w_k=1 \rb.
\]

Up to this point we have proved that:

\begin{itemize}
\item
$N \cong G \ast C^*$ for some finite set $C$ where $G$ is the group of units of $M$.
\item
$M \times M$ is a free $N$-set, with basis $\mathcal{B}$ defined above, under the action
\[
(m_1,m_2) \cdot (n_1,n_2) = (m_1 n_1, n_2 m_2).
\]
  \end{itemize}
The final thing we need to prove is that the set $E$ of edges collapsed when forming $\overleftrightarrow{\Gamma}/N$ is a finitely generated free $M \times M^{\mathrm{op}}$-set.
More precisely, we want to consider the edges $E$ of the two-sided Cayley graph $\overleftrightarrow{\Gamma}$ between vertices in the \emph{same} weak $N$-orbit of $M \times M$.
Then $M \times M^{\mathrm{op}}$ acts on this set $E$ of edges via
\[
(m_1, m_2) \cdot (u,a,v) = (m_1 u, a, v m_2)
\]
where $(m_1, m_2) \in M \times M^{\mathrm{op}}$,
$a \in A$ and $u, v, m_1, m_2$ are reduced words over $A$.

Drawing an analogy with our argument in~\cite{GraySteinbergDocumenta} the edges $E$ are the analogue of edges between vertices in the same $\gr$-class in that argument, and $N$ is playing the role of $R_1$ -- the right units.
Note however that $N$ is not in general the submonoid of right units of the monoid $M \times M^{\mathrm{op}}$.

With the above definitions we want to prove the following.

\begin{theorem}\label{thm:E:Gen:Set2}
Let $E$ be the set of edges of the two-sided Cayley graph $\overleftrightarrow{\Gamma}$ between vertices in the \emph{same} weak $N$-orbit.  Then $E$ is a finitely generated free left $M \times M^{\mathrm{op}}$-set under the action
\[
(m_1, m_2) \cdot (u,a,v) = (m_1 u, a, v m_2)
\]
for $(m_1, m_2) \in M \times M^{\mathrm{op}}$ and $(u,a,v) \in E$, with finite basis
\begin{align*}
& \mathcal{C} = \{
(\delta_1, a, \delta_2): \delta_1 a \delta_2\ \text{is invertible},\ a \in A,\ \delta_1,\ \delta_2 \in A^*\ \text{are reduced, and no proper invertible} \\
& \mbox{subword of $\delta_1a\delta_2$ contains the particular occurrence of $a$ that lies between $\delta_1$ and $\delta_2$} \}.
  \end{align*}
Furthermore, if $(\delta_1, a, \delta_2) \in \mathcal{C}$ then $\delta_1 a \delta_2 \in \Delta$.    
\end{theorem}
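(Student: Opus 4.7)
The plan is to prove this in four parts: the ``moreover'' claim $\delta_1 a \delta_2 \in \Delta$ (yielding finiteness of $\mathcal{C}$); well-definedness of the action on $E$ together with $\mathcal{C} \subseteq E$; that $\mathcal{C}$ spans $E$; and freeness of the action.

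To show $\delta_1 a \delta_2 \in \Delta$ when $(\delta_1, a, \delta_2) \in \mathcal{C}$, I would apply Proposition~\ref{p:lattice.vers1} to the invertible word $\delta_1 a \delta_2$ with distinguished letter $a$: the condition that $\delta_1 a \delta_2$ is itself the minimum of the lattice of invertible subwords containing $a$ forces $\delta_1$ to have no nonempty invertible prefix and $\delta_2$ no nonempty invertible suffix, whence $\delta_1 a \delta_2$ is indecomposable. If $\delta_1 a \delta_2$ is reduced, then by Lemma~\ref{lem:Zhang:Irred} it lies in $\Delta^*$, and in any factorisation as a product of $\Delta$-elements the letter $a$ lies in exactly one factor, which the minimality condition forces to be the entire word; hence $\delta_1 a \delta_2 \in \Delta$. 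If $\delta_1 a \delta_2$ is not reduced, any applicable rewrite rule's left-hand side lies in $\Delta^+$, is invertible, must contain the letter $a$ (since $\delta_1$ and $\delta_2$ are individually reduced), and by minimality equals the whole word $\delta_1 a \delta_2$, placing it in $\Delta^+$; the same factorisation argument then yields $\delta_1 a \delta_2 \in \Delta$. Finiteness of $\mathcal{C}$ follows at once, since each $\delta \in \Delta$ contributes at most $|\delta|$ triples. Next, the right $N$-action and the left $M \times M^{\mathrm{op}}$-action on $M \times M$ commute by direct computation, so the latter preserves weak $N$-orbits and hence $E$. For $\mathcal{C} \subseteq E$, given $(\delta_1, a, \delta_2) \in \mathcal{C}$ and $g := [\delta_1 a \delta_2] \in G$, both $(\delta_1, a\delta_2 g^{-1})$ and $(\delta_1 a, \delta_2 g^{-1})$ lie in $N$, and applied on the right to the basis vertex $(1, g)$ yield the two endpoints of the edge $(\delta_1, a, \delta_2)$; so both endpoints share a weak $N$-orbit.

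For spanning, given $(\mu, a, \gamma) \in E$ with $\mu, \gamma$ reduced, the Otto-Zhang analysis from the proof of Lemma~\ref{lem:forst:quotient2} produces an index $i$ and a factorisation $\mu \equiv w_0 a_1 \cdots a_{i-1} w_{i-1}'$, $\gamma \equiv w_{i-1}'' a_i \cdots a_n w_n$ with $w_{i-1}' a w_{i-1}''$ an invertible subword of $\mu a \gamma$ reducing to the $(i-1)$-th maximal invertible block $w_{i-1}$ of $\ov{\mu a \gamma}$. Proposition~\ref{p:lattice.vers1} applied inside $w_{i-1}' a w_{i-1}''$ yields a unique minimal invertible subword $\delta_1 a \delta_2$ containing $a$, giving $(\delta_1, a, \delta_2) \in \mathcal{C}$; writing $w_{i-1}' \equiv \alpha_1 \delta_1$ and $w_{i-1}'' \equiv \delta_2 \alpha_2$, and setting $m_1 = [w_0 a_1 \cdots a_{i-1} \alpha_1]$ and $m_2 = [\alpha_2 a_i \cdots a_n w_n]$, we obtain $(\mu, a, \gamma) = (m_1, m_2) \cdot (\delta_1, a, \delta_2)$.

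For freeness, suppose also $(\mu, a, \gamma) = (m_1', m_2') \cdot (\delta_1', a, \delta_2')$ with $(\delta_1', a, \delta_2') \in \mathcal{C}$. The crux is to show $(\delta_1, a, \delta_2) \equiv (\delta_1', a, \delta_2')$ as triples: once this holds, the equalities $m_1 \delta_1 = m_1' \delta_1'$ and $\delta_2 m_2 = \delta_2' m_2'$ in $M$ force $m_1 = m_1'$ and $m_2 = m_2'$ by right cancellation by $\delta_1' \in R_1$ and left cancellation by $\delta_2' \in L_1$, which is available because $M$ is a free right $R_1$-set and a free left $L_1$-set by~\cite[Corollary~3.8]{GraySteinbergDocumenta}. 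The main obstacle, and the most delicate step, is precisely the triple equality: the word $\hat m_1' \delta_1' a \delta_2' \hat m_2'$ (where $\hat m_j'$ is the reduced representative of $m_j'$) may differ from $\mu a \gamma$ through reductions at the $\hat m_1'$--$\delta_1'$ or $\delta_2'$--$\hat m_2'$ boundary, so one must argue that the minimal invertible subword of $\mu a \gamma$ containing the letter $a$ still coincides as a word with $\delta_1' a \delta_2'$. This is controlled by Proposition~\ref{p:lattice.vers1} together with the $\mathcal{C}$-conditions on $\delta_1'$ and $\delta_2'$: the ``no nonempty invertible prefix'' condition on $\delta_1'$ and its dual for $\delta_2'$ prevent any invertible environment of $a$ strictly smaller than $\delta_1' a \delta_2'$ from being created by boundary reductions.
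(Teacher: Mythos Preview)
Your proposal follows the paper's overall structure and is correct in the first three parts (with minor variations: your $\mathcal{C}\subseteq E$ argument via explicit $N$-elements is a pleasant alternative to the paper's characterisation of $E$ in terms of the Otto--Zhang normal form). The freeness argument, however, has a gap. You correctly identify the obstacle---the potential for boundary reductions in $\hat m_1'\delta_1'$ and $\delta_2'\hat m_2'$---and the right ingredients (the no-invertible-prefix/suffix conditions on $\delta_1', \delta_2'$ from Proposition~\ref{p:lattice.vers1}). But your conclusion, that these conditions ``prevent any invertible environment of $a$ strictly smaller than $\delta_1' a \delta_2'$ from being created,'' is not the right target: what you actually need is that $\delta_1'$ is literally a suffix of the reduced word $\mu$ and $\delta_2'$ a prefix of $\gamma$, so that $\delta_1' a \delta_2'$ sits as a subword of $\mu a\gamma$ around the distinguished $a$. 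Without this you cannot even compare $\delta_1'a\delta_2'$ with the minimal invertible subword $\delta_1 a\delta_2$ you found in the spanning step.

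The paper resolves this cleanly by proving that $\hat m_1'\delta_1'$ is in fact already reduced (and dually): any rewrite rule applied across the $\hat m_1'$--$\delta_1'$ boundary has an invertible left-hand side, making a nonempty prefix of $\delta_1'$ left invertible; since such a prefix is also right invertible (being a prefix of the invertible word $\delta_1'a\delta_2'$), it would be invertible, contradicting Proposition~\ref{p:lattice.vers1}. With $\hat m_1'\delta_1'\equiv \mu$ and $\delta_2'\hat m_2'\equiv \gamma$ graphically, both $\delta_1 a\delta_2$ and $\delta_1' a\delta_2'$ are minimal invertible subwords of $\mu a\gamma$ containing the distinguished $a$, hence equal by uniqueness; then $\hat m_1'\equiv \hat m_1$ and $\hat m_2'\equiv \hat m_2$ graphically as well, so your cancellation step is not even needed. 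In short, the missing step is precisely that no boundary reductions occur at all---which is both stronger and simpler than your stated claim.
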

\begin{proof}
Let $(u,a,v)$ be an edge with $u,v\in A^*$ reduced, and  let \[uav\equiv w_0a_1\cdots a_mw_m\] in Otto-Zhang normal form.  First of all we claim that $(u,a,v)\in E$ if and only if the distinguished occurrence of $a$ is not any of the $a_i$.
 Suppose first that $a_i$ is the distinguished occurrence of $a$ for some $i$, then $uav$ is reduced by Lemma~\ref{lemma:OZ} (since $u,v$ are reduced, and hence their subwords $w_j$ are reduced) and
\begin{align*}
(u,av) & = (w_0a_1\cdots a_{i-1}w_{i-1},a_i\cdots a_mw_m) &  \text{and}\\
(ua,v) &= (w_0\cdots w_{i-1}a_i, w_ia_{i+1}\cdots a_mw_m)
\end{align*}
are in different weak $N$-orbits by Remark~\ref{remark:useful.basis}, as $(u a,v)$ is a basis element and \[(w_0a_1\cdots a_{i-1},w_{i-1}a_i\cdots a_mw_m)\] is the basis element in the weak orbit of $(u,av)$.

On the other hand, suppose that $a$ is a letter of some $w_i$.  Then $u\equiv w_0a_1\cdots a_iw_i'$ and $v\equiv w_i''a_{i+1}\cdots a_mw_m$ with $w_i\equiv w_i'aw_i''$.  It follows from Remark~\ref{remark:useful.basis} that $(ua,v)$ and $(u,av)$ both belong to the weak $N$-orbit of the basis element $(w_0a_1\cdots a_i,\ov{w_i}a_{i+1}\cdots a_mw_m)$.  In particular, each element of $\mathcal C$ belongs to $E$.

We must now show that if $(u,a,v)\in E$, then $(u,a,v) = (m_1,m_2)(\delta_1,a,\delta_2)$ for a unique $(m_1,m_2)\in M\times M^{op}$ and $(\delta_1,a,\delta_2)\in \mathcal C$.  As before let $uav\equiv w_0a_1\cdots a_mw_m$ in Otto-Zhang normal form.  Then, by the above, there is a unique $i$ such that the distinguished $a$ occurs in $w_i$.  Since $w_i$ is invertible, there is a unique minimal invertible subword $\delta_1a\delta_2$ containing this $a$ by Proposition~\ref{p:lattice.vers1}.  Write $u\equiv u'\delta_1$ and $v\equiv \delta_2v'$.  Then $(\delta_1,a,\delta_2)\in \mathcal C$ and $(u,a,v) = (u',v')(\delta_1,a,\delta_2)$.  We claim that these choices are unique.

Clearly the letter $a$ is uniquely determined by $(u,a,v)$.  Suppose that $\alpha,\beta,\delta_1',\delta_2'$ are reduced with $(\alpha,\beta)(\delta_1',a,\delta_2')=(u,a,v)$ and $(\delta_1',a,\delta_2')\in \mathcal C$.  Then $(u,a,v)=(\alpha\delta_1',a,\delta_2'\beta)$.  We claim that $\alpha\delta_1'$ and $\delta_2'\beta$ are reduced.  We handle just the first case as the second is dual.   Since $\alpha,\delta_1'$ are reduced and each left-hand side of a rewrite rule is nonempty and invertible, it follows that the only way we can apply a  rewrite rule is if it overlaps $\alpha$ and $\delta_1'$.   This would imply that $\delta_1'$ has a left invertible prefix.   Now any prefix of $\delta_1'$ is right invertible since $\delta_1'a\delta_2'$ is invertible.   But then we obtain a contradiction to the fact that $\delta_1'$ has no invertible prefix by Proposition~\ref{p:lattice.vers1}.  Thus $\alpha\delta_1'\equiv u'\delta_1$ and, dually, $\delta_2'\beta\equiv \delta_2v'$.  Then $\delta_1'a\delta_2'$ and $\delta_1a\delta_2$ are both minimal invertible subwords of $uav\equiv \alpha\delta_1'a\delta_2'\beta$ containing the distinguished $a$.  By uniqueness of the  minimal invertible subword containing $a$, which follows from Proposition~\ref{p:lattice.vers1}, we conclude that $\delta_1'a\delta_2'\equiv \delta_1a\delta_2$, with the distinguished occurrences of $a$ the same.  Thus $\delta_1\equiv \delta_1'$, $\delta_2\equiv \delta_2'$, $u'\equiv \alpha$ and $v'\equiv \beta$. This establishes the uniqueness, and so $\mathcal C$ is a basis.

To prove that $\mathcal C$ is finite we show that if $(\delta_1,a,\delta_2)\in \mathcal C$, then $\delta_1a\delta_2\in \Delta$.  We begin by showing that $\delta_1a\delta_2\in \Delta^*$.  If $\delta_1a\delta_2$ is reduced, then it belongs to $\Delta^*$ by Lemma~\ref{lem:Zhang:Irred} since it is invertible.  Otherwise, we can apply a rewrite rule to $\delta_1a\delta_2$.  Since $\delta_1,\delta_2$ are reduced, the left-hand side of the rewrite rule must contain $a$.  Since each left-hand side of a rewrite rule is in $\Delta^+$, hence invertible, and $\delta_1a\delta_2$ has no proper invertible subword containing the distinguished $a$, we must have that $\delta_1a\delta_2$ is the left-hand side of a rewrite rule, and hence in $\Delta^*$.  It follows now that there is a subword of $\delta_1a\delta_2$ containing $a$ that belongs to $\Delta$. But then since elements of $\Delta$ are invertible, we deduce by minimality that $\delta_1a\delta_2\in \Delta$.  We conclude that $\mathcal C$ is finite as $\Delta$ is finite.
\end{proof}

\section{Two-sided homological finiteness properties of special monoids}\label{sec:main:theorem:special}

With all the results in place they can be combined to prove the following main result.  We shall use freely that $\mathbb ZM$-bimodules are the same thing as left $\mathbb Z[M\times M^{op}]$-modules.

Before giving the proof, we make a general topological remark.  If $X$ is a CW complex, then the cellular chain  complex can be augmented to \[\cdots\rightarrow C_1(X)\to C_0(X)\to H_0(X)\to 0\] where the map $C_0(X)\to H_0(X)$ is the projection.   This chain complex is natural with respect to cellular maps.  Identifying $H_0(X)$ with the free abelian group on the path components of $X$,  this map sends each vertex (viewed as a basis element of $C_0(X)$) to its path component.  Hence we can view this complex as the direct sum of the standard augmented cellular chain complexes of the path components of $X$.   In particular, it will be exact if each path component of $X$ is contractible. 

\begin{theorem}[Theorem~A]\label{thm:main:special:monoids2}
Let $M$ be a finitely presented monoid of the form 
\[M = \lb A \mid w_1=1, \ldots, w_k=1 \rb\]
and let $G$ be the group of units of $M$.
\begin{enumerate}
\item[(i)]
If $G$ is of type $\FPn$ (for $1 \leq n \leq \infty$) then $M$ is of type bi-$
\FPn$.
\item[(ii)]
Furthermore, the Hochschild cohomological dimension of $M$ is bounded below by
$\mathrm{cd}(G)$
and bounded above by
$\mathrm{max}\{ 2, \mathrm{cd}(G) \}$.
  \end{enumerate}
\end{theorem}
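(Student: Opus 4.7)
My strategy is to assemble a finite free resolution of the $\ZM$-bimodule $\ZM$ by feeding a resolution of the trivial $\ZG$-module through two successive flat base changes (first along $\ZG\hookrightarrow \ZN$, second along $\ZN\hookrightarrow \mathbb Z[M\times M^{\mathrm{op}}]$) and then splicing it onto the augmented cellular chain complex of $\overleftrightarrow{\Gamma}$ via the forest statement of Lemma~\ref{lem:forst:quotient2}.  Concretely, Theorem~\ref{thm:E:Gen:Set2} gives that $\mathbb ZE\hookrightarrow \mathbb Z\tilde E=\mathbb Z[M\times A\times M]$ is an inclusion of finitely generated free $\mathbb Z[M\times M^{\mathrm{op}}]$-modules, producing the short exact sequence
\[0\to \mathbb ZE \to \mathbb Z\tilde E\to \mathbb Z\tilde E/\mathbb ZE\to 0,\]
whence by Lemma~\ref{t:bieri} the quotient $\mathbb Z\tilde E/\mathbb ZE$ is of type $\FPinfty$ and has projective dimension at most $1$.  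Separately, Lemma~\ref{lem:forst:quotient2} makes the cellular chain complex of $\overleftrightarrow{\Gamma}/N$ exact and $M\times M^{\mathrm{op}}$-equivariantly identifies its edges with $\tilde E\setminus E$ and its path components with $M$, delivering the short exact sequence
\[0\to \mathbb Z\tilde E/\mathbb ZE \to \mathbb Z[(M\times M)/N] \to \ZM \to 0.\]

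\textbf{The crucial identification} is $\mathbb Z[(M\times M)/N]\cong \mathbb Z[M\times M^{\mathrm{op}}]\otimes_{\ZN}\mathbb Z$ as left $\mathbb Z[M\times M^{\mathrm{op}}]$-modules.  Under the set-level bijection $M\times M\leftrightarrow M\times M^{\mathrm{op}}$, the right $N$-action defined in Theorem~\ref{thm:Free:N:Set2} is exactly right multiplication by the submonoid $N\subseteq M\times M^{\mathrm{op}}$ (both formulas give $(m_1n_1,n_2m_2)$).  Theorem~\ref{thm:Free:N:Set2} therefore says that $\mathbb Z[M\times M^{\mathrm{op}}]$ is a free, hence flat, right $\ZN$-module with basis $\mathcal B$.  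Invoking the free product decomposition $N\cong G\ast C^*$ with $C$ finite from Theorem~\ref{thm:FreeProductStructure}, the standard tree-of-cosets exact sequence
\[0\to \bigoplus_{c\in C}\ZN \to \ZN\otimes_{\ZG}\mathbb Z \to \mathbb Z\to 0,\]
combined with flatness of $\ZN$ over $\ZG$ (since $N$ is a free left $G$-set in the free product), transfers $\FPn$ from $G$ to $N$ and gives $\mathrm{pd}_{\ZN}(\mathbb Z)\leq \max\{1,\mathrm{cd}(G)\}$; flat base change then delivers the same bounds for $\mathbb Z[(M\times M)/N]$ as a $\mathbb Z[M\times M^{\mathrm{op}}]$-module.

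\textbf{Finally}, two applications of Lemma~\ref{t:bieri} to the two short exact sequences above yield bi-$\FPn$ for $M$, proving~(i); the standard estimate $\mathrm{pd}(C)\leq \max\{\mathrm{pd}(A)+1,\mathrm{pd}(B)\}$ for a short exact sequence $0\to A\to B\to C\to 0$ applied to $0\to \mathbb Z\tilde E/\mathbb ZE\to \mathbb Z[(M\times M)/N]\to \ZM\to 0$ then yields the upper bound $\dim M\leq \max\{2,\mathrm{cd}(G)\}$.  The lower bound $\mathrm{cd}(G)\leq \dim M$ comes from the general fact that $\dim M$ bounds the left cohomological dimension $\mathrm{cd}_\ell M$, together with the one-sided special-monoid analysis of~\cite{GraySteinbergDocumenta} which establishes $\mathrm{cd}_\ell M\geq \mathrm{cd}(G)$.  \textbf{The main obstacle} I anticipate is carefully verifying that the right $N$-action on $M\times M$ constructed in Section~\ref{sec:MxMFreeNSet:special} indeed coincides with right multiplication inside $M\times M^{\mathrm{op}}$ and that the resulting $\mathbb Z[M\times M^{\mathrm{op}}]$ is flat over $\ZN$; once this bridge is in place, the rest of the argument is routine exact-sequence chasing combined with the standard free-product resolution.
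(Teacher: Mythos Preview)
Your proposal is correct and follows essentially the same route as the paper: both arguments use the exact augmented cellular chain complex of the forest $\overleftrightarrow{\Gamma}/N$, resolve $C_1(F)$ via the short exact sequence $0\to\mathbb ZE\to\mathbb Z[M\times A\times M]\to C_1(F)\to 0$ from Theorem~\ref{thm:E:Gen:Set2}, identify $C_0(F)\cong\mathbb Z[M\times M^{\mathrm{op}}]\otimes_{\ZN}\mathbb Z$ via Theorem~\ref{thm:Free:N:Set2}, transfer $\FPn$ and the projective-dimension bound from $G$ to $N$ using the free-product structure of Theorem~\ref{thm:FreeProductStructure}, and conclude with Lemma~\ref{t:bieri} and Lemma~\ref{c:fp.resolved}. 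The only cosmetic difference is that you spell out the tree-of-cosets exact sequence for $N\cong G\ast C^*$ explicitly whereas the paper cites \cite[Corollaries~4.13 and~4.15]{GraySteinbergDocumenta}; one small slip to fix is that your parenthetical should read ``$N$ is a free \emph{right} $G$-set'' (this is what makes $\ZN$ flat as a right $\ZG$-module for the tensor $\ZN\otimes_{\ZG}-$).
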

\begin{proof}
Let $\overleftrightarrow{\Gamma}$ be the two-sided Cayley graph of $M$ and let $F$ be  $\overleftrightarrow{\Gamma} /N$ which is a forest by Lemma~\ref{lem:forst:quotient2}.
Notice that the multiplication map $M \times M \rightarrow M$ factors through $(M \times M)/N$ and hence the connected components of $F$ are still isomorphic to $M$ as an $M$-biset.  The augmented cellular chain complex
\[
0 \rightarrow C_1(F) \rightarrow C_0(F) \rightarrow H_0(F)\cong \ZM \rightarrow 0
\]
is a bimodule resolution of $\ZM$ with exactness following from the discussion before the theorem, as $F$ is a forest and hence has contractible path components.

Now $C_1(F)$ is the quotient of a free bimodule on $A$, namely $\mathbb Z[M\times A\times M]$, by $\mathbb ZE$ where $E$ is the finitely generated free $M\times M^{op}$-set from Theorem~\ref{thm:E:Gen:Set2}.  Thus we have a free resolution
 \[0\longrightarrow \mathbb ZE\longrightarrow \mathbb Z[M\times A\times M]\to  C_1(F)\to 0\] by finitely generated free bimodules, and so $C_1(F)$ is $\FPinfty$ and has
projective dimension at most 1.

We have $C_0(F)\cong \Z[(M\times M)/N]\cong \Z[M \times M]\otimes_{\ZN} \Z$.
In Theorem~\ref{thm:FreeProductStructure} we proved that $N$ is a free product of the group of units $G$ and
a finitely generated free monoid.
Together with~\cite[Corollary 4.13]{GraySteinbergDocumenta} this then implies that
$N$ is of type left-$\FPn$, and hence $\Z$ is a left $\ZN$-module of type $\FPn$. Let $F_\bullet\to \Z\to 0$ be a free resolution of $\Z$ by $\ZN$-modules with $F_i$ finitely generated for $0\leq i\leq n$.  Then since $M\times M^{op}$ is a free right $N$-set, it follows that $\Z[M \times M^{op}]$ is free as a right $\ZN$-module and hence flat.  Thus $\mathbb Z[M\times M^{op}]\otimes_{\ZN} F_\bullet \to  \mathbb Z[M\times M^{op}]\otimes_{\ZN} \Z\to 0$ is a free resolution with $\mathbb Z[M\times M^{op}]\otimes_{\ZN} F_i$ finitely generated for $0\leq i\leq n$.  Therefore, $C_0(F)\cong \Z[M \times M^{op}]\otimes_{\ZN} \Z$ is a $\mathbb Z[M\times M^{op}]$-module of type $\FPn$.
Now since $C_0(F)$ is $\FPn$ and $C_1(F)$ is $\FPinfty$ it follows from Lemma~\ref{t:bieri}
that the $\ZM$ is $\FPn$ as a $\mathbb Z[M\times M^{op}]$-module, and hence the monoid $M$ is of type bi-$\FPn$.

By~\cite[Corollary 4.15]{GraySteinbergDocumenta}, since $N$ is a free product of $G$ and a free monoid
we can choose a free resolution of $\Z$ over $\ZN$ to have length at most
$\mathrm{max}\{1, \mathrm{cd}(G) \}$,
and so we can deduce from the above argument that $C_0(F)$ has projective dimension at most $\max\{1,\mathrm{cd}(G)\}$.   Since $C_1(F)$ has projective dimension at most $1$, we deduce that $\ZM$ has projective dimension at most
$\mathrm{max}\{2, \mathrm{cd}(G) \}$,
and thus $M$ has Hochschild dimension at most $\mathrm{max}\{2, \mathrm{cd}(G)\}$, by Lemma~\ref{c:fp.resolved}.

The fact that the Hochschild cohomological dimension of $M$ is bounded below by that of $G$ follows from~\cite[Theorem 3.16(2)]{GraySteinbergDocumenta} together with the fact that the Hochschild cohomological dimension bounds both the left and right cohomological dimension of a monoid; see~\cite[Section~7]{GraySteinbergAGT}.   
\end{proof}

\subsection{Applying the general result to special one-relator monoids}

In the one-relator case the main theorem above specialises to the following which gives a Lyndon's Identity type theorem for the two-sided homology of one-relator monoids.

\begin{theorem}[Theorem~B]\label{thm:main:one:relator}
Every one-relator monoid of the form $\langle A \mid r=1 \rangle$ is of type bi-$\FP_\infty$. Moreover, if $r$ is not a proper power then the one-relator monoid has Hochschild cohomological dimension at most $2$, while if $r$ is a proper power then the monoid has infinite Hochschild cohomological dimension.
  \end{theorem}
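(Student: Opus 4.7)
The plan is to apply Theorem~A to $M = \langle A \mid r=1\rangle$ and combine it with classical facts about one-relator groups.

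First I would recall that, by Adjan~\cite{Adjan1966} (see also~\cite{Zhang}), the group of units $G$ of $M$ is itself a one-relator group on a finite generating set. More precisely, decomposing $r$ into its minimal invertible pieces as $r \equiv r_1 r_2 \cdots r_n$ with each $r_i \in \Delta$, the group $G$ has a presentation of the form $G \cong \Gpres{\Delta}{r_1 r_2 \cdots r_n}$. By Lyndon's Identity Theorem~\cite{Lyndon1950}, every one-relator group is of type $\FP_\infty$, so Theorem~A(i) immediately yields that $M$ is of type bi-$\FP_\infty$, which handles the first assertion of the theorem.

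For the dimension statement I would invoke the classical dichotomy for one-relator groups (due to Lyndon, with the torsion half due to Karrass--Magnus--Solitar): $\Gpres{X}{w}$ is torsion-free and satisfies $\mathrm{cd}(G) \leq 2$ when $w$ is not a proper power in the free group $F_X$, whereas it contains non-trivial torsion (and hence has infinite cohomological dimension) when $w$ is a proper power. Combined with Theorem~A(ii), Theorem~B will follow once we establish the equivalence that $r \in A^*$ is a proper power in $A^*$ if and only if the group relator $r_1 \cdots r_n$ is a proper power in $F_\Delta$.

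To prove this equivalence, I would first observe that since $r_1 \cdots r_n$ is a positive word it is a proper power in $F_\Delta$ if and only if it is a proper power in the free monoid $\Delta^*$ (a standard fact from combinatorics on words). If $r \equiv s^k$ with $k \geq 2$, then $s$ is invertible, since $s \cdot s^{k-1} = s^{k-1} \cdot s = 1$ in $M$; writing $s \equiv s_1 \cdots s_m$ as a product of indecomposable invertible pieces (each necessarily in $\Delta$, as $|s_i| \leq |s| < |r|$) exhibits $r \equiv (s_1 \cdots s_m)^k$ as a decomposition of $r$ into minimal invertible pieces, and uniqueness of this decomposition forces $r_1 \cdots r_n \equiv (s_1 \cdots s_m)^k$ in $\Delta^*$. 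The converse is immediate by concatenating the letters of each generator from $\Delta$. The main technical obstacle I anticipate is precisely this equivalence of proper-power statements at the two different word levels, where the key ingredients are uniqueness of the decomposition into minimal invertible pieces and the standard free-monoid/free-group fact recalled above.
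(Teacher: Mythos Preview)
Your proposal follows exactly the paper's route: Adjan's theorem gives that the group of units $G$ is a one-relator group, Lyndon's Identity Theorem gives $\FP_\infty$ for $G$, and Theorem~A then yields both the bi-$\FP_\infty$ statement and the dimension bounds. The only difference is that the paper black-boxes the equivalence ``$r$ is a proper power in $A^*$ $\Longleftrightarrow$ $G$ has torsion'' by citing \cite[Lemma~3.18]{GraySteinbergDocumenta}, whereas you argue it directly via an explicit presentation of $G$.

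Two small cautions on your direct argument. First, Zhang's one-relator presentation of $G$ is over the set of \emph{equivalence classes} of $\Delta$ under equality in $M$, not over $\Delta$ itself, so the Karrass--Magnus--Solitar proper-power test must be applied over that quotient alphabet; your forward implication (``$r$ a proper power $\Rightarrow$ relator a proper power'') survives the projection $\Delta\to\Delta/{=_M}$ unchanged, but the reverse direction (not a proper power over $\Delta$ $\Rightarrow$ not a proper power over the quotient) is not automatic and is precisely where the cited lemma does the work. Second, your parenthetical justification ``each necessarily in $\Delta$, as $|s_i|\leq|s|<|r|$'' is incomplete: membership in $\Delta$ also requires $s_i = r_j$ in $M$ for some $j$, which you get for free from the \emph{uniqueness} of the decomposition of $r$ into indecomposable invertible factors (so that the $s_i$ literally \emph{are} the $r_j$), not from the length bound alone.
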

\begin{proof}
Let $M= \langle A \mid r=1 \rangle$. The group of units $G$ of $M$ is a one-relator group by Adjan's theorem~\cite[Lemma 96]{Adjan1966} and hence the group $G$ is of type $\FP_\infty$ by Lyndon's Identity Theorem~\cite{Lyndon1950}. Now applying Theorem~\ref{thm:main:special:monoids2}(i) we deduce that $M$ is of type bi-$\FP_\infty$.

For the second statement, if $r$ is not a proper power then the group of units $G$ will be  a torsion-free one-relator group (see e.g.~\cite[Lemma~3.18]{GraySteinbergDocumenta}). Then Lyndon's results~\cite{Lyndon1950} imply that $G$ has cohomological dimension at most 2, which implies the same for the Hochschild cohomological dimension   of $M$ by Theorem~\ref{thm:main:special:monoids2}(ii).

If $r$ is a proper power then $G$ has torsion and hence infinite cohomological dimension.  Therefore $M$ has infinite  Hochschild cohomological dimension   by Theorem~\ref{thm:main:special:monoids2}(ii).
\end{proof}
\section{Other one-relator monoids}\label{sec:other}

In~\cite{GraySteinbergSelecta} we proved that all one-relator monoids $\lb A \mid u=v \rb$ are of type left-$\FP_\infty$ (and dually of type right-$\FP_\infty$). In that paper we also classified the one-relator monoids of cohomological dimension at most 2. It is natural to ask to what extent the results in this paper on two-sided homology might be extended to general one-relator monoids $\lb A \mid u=v \rb$. In particular, it would be interesting to see whether the techniques in this paper can be further developed to prove that all one-relator monoids $\lb A \mid u=v \rb$ are of type bi-$\FP_\infty$. 

One natural approach to answering this question would be to try to use the same general proof strategy that was employed in~\cite{GraySteinbergSelecta} in the one-sided case. From that viewpoint, the result in this paper showing that special monoids are all of type bi-$\FP_\infty$ would provide one important infinite family of base cases. The other infinite family of base cases are the, so-called, \emph{strictly aspherical} one-relator monoids. 
It turns out (as we shall explain below) that known results can be applied to deduce that these strictly aspherical one-relator monoids are also of type bi-$\FP_\infty$. Hence combined with the results in this paper that deals with all the base cases. Thus following same approach as in~\cite{GraySteinbergSelecta} this means that the key to proving that all one-relator monoids are of type bi-$\FP_\infty$ will be to develop an understanding of how the property bi-$\FP_\infty$ behaves under a process called compression (a common framework for both Adjan–Oganesyan compression~\cite{Adyan1987} and Lallement compression~\cite{Lallement1974} that was developed in~\cite{GraySteinbergSelecta}). Developing this understanding in the one-sided case was already quite an involved task, occupying much of the paper~\cite{GraySteinbergSelecta}, and we expect the two-sided case to be even more challenging.  

Let us outline this approach in more detail. For full details of the terminology used here we refer the reader to~\cite{GraySteinbergSelecta}. Let $M$ be the one-relator monoid defined by the presentation $\lb A \mid u=v \rb$ where, without loss of generality, we assume $|v| \leq |u|$. This presentation for $M$ is called \emph{compressible} if there is a nonempty word $r \in A^+$ such that $u,v \in A^*r \cap rA^* $. Otherwise, the presentation is called incompressible. If $u,v \in A^*r \cap rA^* $ then there is an associated compressed one-relator monoid $M_r$ that has a shorter defining relation, and the word problem of $M_r$ is equivalent to that of $M$. The compression operation is transitive and confluent and it follows that there is a unique incompressible monoid $M'$ to which $M$ compresses. 
      
It follows from~\cite[Lemma 3.4 and Proposition 3.5]{GraySteinbergSelecta} 
and~\cite[Corollary 5.6]{Kobayashi1998} 
 that if $M$ is the one-relator monoid defined by the presentation $\lb A \mid u=v \rb$ and $M'$ is the unique incompressible monoid to which $M$ compresses then either 
\begin{enumerate} 
\item[(i)] $M'$ is a special one-relator monoid $\lb B \mid w=1 \rb$, or else 
\item[(ii)] $M'$ is a non-special one-relator monoid with a strictly aspherical presentation $\lb B \mid w=z \rb$. 
  \end{enumerate}
Here a monoid presentation is strictly aspherical if each connected component of the Squier complex of the presentation is simply connected (see~\cite[Section~2]{GraySteinbergSelecta} for the relevant definitions).

In case (i) above it follows from the results in this paper that $M' = \lb B \mid w=1 \rb$
is of type bi-$\FP_\infty$.   
In case (ii) the compressed monoid $M' = \lb B \mid w=z \rb$ is strictly aspherical, and it turns out, as we shall now explain, that this is enough to show that 
$M'$
is also of type bi-$\FP_\infty$.   
Indeed, a key fact used in the proof of the main result of~\cite{GraySteinbergSelecta} was that the Cayley complex of a strictly aspherical presentation turns out to be contractible; see~\cite[Lemmas 6.4 and 6.5]{GraySteinbergSelecta}.  It turns out that key fact also holds in the two-sided case too, and that it is likely to be equally important in the approach we are describing here to proving that all one-relator monoids are of type bi-$\FP_\infty$. 

In more detail, in~\cite{Pride1995} Pride defines $\pi_2^{(b)}(\mathcal{P}) = H_1(D(\mathcal{P}))$ where $D(\mathcal{P})$ is the Squier complex of the presentation.  Hence if $\mathcal{P}$ is strictly aspherical then $\pi_1(D(\mathcal{P}))$ is trivial at every vertex, hence in this case $\pi_2^{(b)}(\mathcal{P}) = H_1(D(\mathcal{P})) = 0$.  Then the argument in the final two paragraphs of the article~\cite{Steinberg_2025} shows that the second homology of the 2-sided Cayley 2-complex of the presentation $\mathcal{P}$ is equal to $\pi_2^{(b)}(\mathcal{P})$.  When the presentation $\mathcal{P}$ is strictly aspherical this implies that second homology of the two-sided Cayley complex is $0$. Together with the fact proved in~\cite[Theorem 7.12]{GraySteinbergAGT} that each connected component of the two-sided Cayley complex is simply connected, 
this implies 
by the Hurewicz and Whitehead theorems
that the  connected components of the two-sided Cayley complex are contractible. 
Then it follows that the two-sided Cayley graph is a bi-equivariant classifying space for the monoid, in the sense defined in~\cite[Section~7]{GraySteinbergAGT}, from which it follows that the monoid is bi-$\F_\infty$ and hence by~\cite[Proposition 7.7]{GraySteinbergAGT} is also bi-$\FP_\infty$.    
Also this shows that a strictly aspherical finitely presented monoid has Hochschild cohomological dimension at most $2$, again by results of~\cite[Section~7]{GraySteinbergAGT}, since the argument above shows that the 2-sided Cayley 2-complex is a bi-equivariant classifying space for the monoid.   

In particular the observations in the previous paragraph can be used to give an alternative proof that when $w$ is not a proper power the torsion-free special one-relator monoid $\langle A \mid w=1 \rb$ is of type bi-$\FP_\infty$ and has Hochschild cohomological dimension at most $2$, since Kobayashi proved in~\cite[Corollary~7.5]{Kobayashi2000} that torsion-free special one-relator monoids are strictly aspherical. In addition these observations show the property bi-$\FP_\infty$ holds for one-relator monoids also in the following cases. 

\begin{Thm} 
Let $M = \lb A \mid u=v \rb$ be an incompressible one-relator monoid with $u,v \in A^+$, that is, there is no nonempty word $r \in A^+$ such that $u,v \in A^*r \cap r A^*$. Then $M$ is of type bi-$\FP_\infty$ and has Hochschild cohomological dimension at most $2$.   
\end{Thm}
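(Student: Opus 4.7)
The plan is to reduce the statement to the strictly aspherical case already treated in the paragraph immediately preceding the theorem. The first step is to observe that under the stated hypotheses the given presentation $\lb A \mid u=v \rb$ must itself be strictly aspherical. Indeed, by the incompressibility assumption $M$ coincides with its unique compression $M'$, and by \cite[Lemma~3.4 and Proposition~3.5]{GraySteinbergSelecta} together with \cite[Corollary~5.6]{Kobayashi1998} there are only two alternatives for $M'$: either $M'$ is presented by a special presentation $\lb B \mid w=1 \rb$, or $M'$ is non-special and admits a strictly aspherical presentation $\lb B \mid w=z \rb$. Since $u,v \in A^+$, the presentation $\lb A \mid u=v \rb$ is not of special form, so the first alternative is ruled out and $\lb A \mid u=v \rb$ must fall in the second.

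The second step is then to apply the chain of results spelled out in the paragraph preceding the theorem. Strict asphericity of $\lb A \mid u=v \rb$ means every component of the Squier complex $D(\mathcal P)$ is simply connected, so $\pi_2^{(b)}(\mathcal P) = H_1(D(\mathcal P)) = 0$. By the argument in the last two paragraphs of \cite{Steinberg_2025} this group equals the second homology of the two-sided Cayley $2$-complex of $\mathcal P$, so that second homology vanishes; combined with the simple connectivity of each component of this $2$-complex established in \cite[Theorem~7.12]{GraySteinbergAGT}, the Hurewicz and Whitehead theorems force every component to be contractible. Consequently, the two-sided Cayley $2$-complex is a bi-equivariant classifying space for $M$ of dimension $2$ with finitely many $M\times M^{\mathrm{op}}$-orbits of cells in each dimension. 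Using \cite[Section~7]{GraySteinbergAGT} and \cite[Proposition~7.7]{GraySteinbergAGT} this yields that $M$ is of type bi-$\Finfty$, hence of type bi-$\FPinfty$, with Hochschild cohomological dimension at most $2$.

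I do not expect a genuine technical obstacle here: the topological heavy lifting has already been carried out in the discussion preceding the theorem, and the point of the statement is precisely to identify a class of one-relator monoids to which that discussion applies directly. The one point worth being careful about is that the compression dichotomy from \cite{GraySteinbergSelecta} and \cite{Kobayashi1998} is a presentation-level statement, not an abstract monoid-level statement, so one must verify that the hypothesis $u,v\in A^+$ together with incompressibility of $\lb A \mid u=v\rb$ as a presentation really does exclude the special alternative; this is immediate from the fact that the defining relator in that alternative is of the form $w=1$ with empty right-hand side, a form that $u=v$ does not take.
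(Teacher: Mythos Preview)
Your proposal is correct and follows essentially the same approach as the paper: the theorem is stated as an immediate consequence of the discussion preceding it, and you have accurately recapitulated that discussion by first invoking the incompressibility dichotomy from \cite{GraySteinbergSelecta} and \cite{Kobayashi1998} to place $\langle A\mid u=v\rangle$ in the strictly aspherical case (using $u,v\in A^+$ to rule out the special case), and then applying the topological argument via \cite{Steinberg_2025} and \cite{GraySteinbergAGT} exactly as the paper does. Your closing caveat about the dichotomy being a presentation-level statement is a reasonable point of care, and the resolution you give is the right one.
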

The class of examples covered by the previous proposition should, from the discussion in this section above, be viewed as a family of base cases for a possible future inductive proof using compression that all one-relator monoids are of type bi-$\FP_\infty$.

\end{document}